\numberwithin{equation}{section}
\newtheorem{theorem}{Theorem}[section]
\newtheorem{lemma}[theorem]{Lemma}
\newtheorem{proposition}[theorem]{Proposition}
\newtheorem{corollary}[theorem]{Corollary}
\theoremstyle{definition}
\theoremstyle{remark}
\newtheorem{remark}{Remark}[section]
\theoremstyle{plain}
\numberwithin{equation}{section}
\newcommand{\E}{\mathbb{E}}
\newcommand{\N}{\mathbb{N}}
\newcommand{\nn}{{(n)}}
\newcommand{\Pp}{\mathcal{P}}
\newcommand{\pp}{\mathbb{P}}
\newcommand{\qw}{\begin{equation}}
\newcommand{\qwe}{\end{equation}}
\newcommand{\qww}{\begin{equation*}}
\newcommand{\qwee}{\end{equation*}}
\newcommand{\aan}[1]{{a_{n}(#1)}}
\newcommand{\ccn}[1]{{c_{n}(#1)}}
\newcommand{\mmn}[1]{n}
\newcommand{\tn}{c_n}
\newcommand{\inc}[1]{\mathds{1}\left\{{#1}\right\}}
\newcommand{\sll}[2]{S_{{#1},{#2}}}
\newcommand{\tre}{\mathbb{T}_L}
\newcommand{\lag}{{l^*}}
\newcommand{\luc}{{\lfloor u \ccn{k} \rfloor}}
\newcommand{\Ee}{\mathcal{E}}
\newcommand{\wt}{\widetilde}
\newcommand{\bv}[1]{\bar{\nu}^\nn_{#1}(u;\mu|_{#1-1})}
\newcommand{\tred}{\textcolor{red}}
\newcommand{\hideownbib}[1]{} %#1
\begin{document}
\title[Aging in the GREM-like trap model]{Aging in the GREM-like trap model}

\author{V\'eronique Gayrard, Onur G\"{u}n}
\address
{V\'eronique Gayrard, I2M, Aix-Marseille Universit\'e \newline
\indent 39 rue F. Joliot Curie\newline
\indent 13453 Marseille
cedex 13, FRANCE}
\email{veronique.gayrard@math.cnrs.fr, veronique@gayrard.net}
\address
{Onur G\"{u}n, Weierstrass Institute Berlin\newline
\indent 39 Mohrenstrasse\newline
\indent 10117 Berlin, GERMANY}%
\email{guen@wias-berlin.de}%

\date{\today}
\subjclass{Primary 82C44, 60K37, 60J27; secondary 60G55.}%Dynamics of disordered systems, spin glasses.
\keywords{Random walk, random environment, trap models, aging, spin glasses, aging.}%

\begin{abstract}
The GREM-like trap model is a continuous time Markov jump process on the leaves of a finite volume $L$-level tree whose transition rates depend on a \emph{trapping landscape} built on the vertices of the whole tree. We prove that the natural two-time correlation function of the dynamics ages in the infinite volume limit and identify the limiting function. Moreover, we take the limit $L\to\infty$ of the two-time correlation function of the infinite volume $L$-level tree. The aging behavior of the dynamics is characterized by a collection of \emph{clock processes}, one for each level of the tree.\,We show that for any $L$, the joint law of the clock processes converges. Furthermore, any such limit can be expressed through Neveu's continuous state branching process. Hence, the latter contains all the information needed to describe aging in the GREM-like trap model both for finite and infinite levels.

\end{abstract}
\maketitle

\section{Introduction}\label{sc1}

Trap models are main theoretical tools to quantify the out-of-equilibrium dynamics of spin glasses, and more specifically their aging behavior (see \cite{BCKM98} for a review). Introduced in this context by J.P.~Bouchaud \cite{Bou92} to model the dynamics of mean field spin glasses such as the REM, GREM, and $p$-spin SK models, trap models are simple Markov jump processes that describe dynamics on microscopic spin space in terms of thermally activated barrier crossing between the valleys (or \emph{traps}) of a random landscape on reduced state space devised
%defined 
so as to retain some of the key features of the free energy landscape of the underlying
% spin glass model.
spin system.
Activated aging occurs if, on time scales that diverge with the size of the system, the process 
observed through 
%using
suitably chosen  time-time correlation functions becomes slower and slower as time elapses.
%(Activated aging occurs if, on time-scales that diverge with the size of the system, the process 
%observed through 
%%using
%suitably chosen  time-time correlation functions
%is abnormally slow, namely, if the de-correlation becomes slower and slower as time elapses.)

%Activated aging occurs if, on time scales that diverge with the size of the system, the process 
%becomes slower and slower as time elapses, as observed through suitably chosen  time-time correlation functions.

%a pattern of behavior quantified/observed using suitable time-time correlation functions.

%The first rigorous connection between the microscopic dynamics of a spin system and a trap model was established in \cite{} where it is proved that a particular Glauber dynamics of the REM, known as the Random Hopping Dynamics (hereafter RHD), observed on ``the last time scale before stationarity'' has the same aging behavior as Bouchaud?s symmetric trap model on the complete graph \cite{}. This result was followed up by a series of articles yielding a detailed understanding of the aging behavior of the RHD dynamics of the REM  \cite{} (and of its Metropolis dynamics)
%

The first rigorous connection between the microscopic dynamics of a spin system and a trap model was established in \cite{BBG03a,BBG03b} where it is proved that a particular Glauber dynamics of the REM has the same aging behavior as Bouchaud's symmetric trap model on the complete graph or ``REM-like trap model'' \cite{BD95}. This result was followed up by a series of results yielding a detailed understanding of the aging behavior of the REM for a wide range of time scales and temperatures, and various dynamics \cite{BC08,Ver2,VerMetro}. Spin glasses with non-trivial correlations, namely the $p$-spin SK models, could also be dealt with albeit in a restricted domain of the time scales and temperature parameters, where it was proved that aging is just as in the REM  \cite{BBC08,BG12,BG13,BGS13}. This reflects the fact that the dynamics is insensitive to the correlation structure of the random environment.
%correlation structure of the random environment does not influence the aging dynamics.
% not sensitive to
Altough we do expect that, on longer time scales, the aging dynamics of the $p$-spin SK models belongs to (a) different universality class(es), there is yet no rigorous result
% in this direction
supporting this idea.
%such a claim

%At a heuristic level, 
%%a first attempt to account for the the effect of strongly  correlated random environment was made by made by Bouchaud and Dean \cite{} who generalize....
%the effect of strongly  correlated random environments
% %the correlation structure 
% on  activated aging was first accounted for by Bouchaud and Dean \cite{} who generalized the REM-like trap model of \cite{} to a dynamics in a  landscape of random traps organized according to a hierarchical tree structure
%%on a foliated tree structure 
%inspired from Parisi's ultrametric construction \cite{}.
%%(
%% that mimics the standard picture of a spin-glass phase  in the ``full replica symmetry breaking'' \cite{}.
%% )

%At a heuristic level, the effect of strongly  correlated random environments on activated aging was first accounted for by 
%Bouchaud and Dean \cite{} using a trap model in a landscape of random traps organized according to a hierarchical tree structure inspired from Parisi's ultrametric construction \cite{}.
%
At a heuristic level, possible effects of strongly correlated random environments on activated aging were first modeled by Bouchaud and Dean \cite{BD95} using a trap model whose random traps are organized according to a hierarchical tree structure inspired by Parisi's ultrametric construction \cite{MPSTV84}.
More recently, Sasaki and Nemoto \cite{SN00a, SN01} introduced a trap model on a tree with a view to modeling the aging dynamics of the GREM. 
%From a mathematical perspective their approach seems more promising: indeed
From a mathematical perspective this GREM-like trap model seems more promising. Indeed a detailed (rigorous) analysis of the statics of the GREM (as well as that of a more general class of continuous random energy models, or CREMs)
%(including a description of the limiting Gibbs measures through Ruelle's Poisson cascades)
is available (see \cite{BK07} and the references therein), making it plausible to expect that the GREM-like trap model correctly predicts 
%the main features of 
the behavior of the aging dynamics of the GREM itself, at least  in some domain of the temperature and time scale parameters.
%the predictions based on that model are correct/
%making (it) possible to/
% more plausible/
%it is plausible to expect that the predictions based on that model are correct/
%the GREM-like trap model correctly predicts the aging dynamics of the GREM itself, at least  in some domain of the temperature and time scale parameters.

With this in mind, our aim in this paper is two-fold. Firstly, we want to identify the aging behavior of the GREM-like trap model as the branch size of the tree, $n$, diverges, and also as the number of levels, $L$,  diverges after the limit $n\to\infty$ is taken. Secondly, we want to emphasize that Neveu's continuous state branching process (hereafter abbreviated CSBP) naturally describes aging in the GREM-like trap model, namely, the aging behavior of the dynamics is encoded in a collection of \emph{clock processes} and all possible limits of these clock processes are extracted from Neveu's CSBP.

% is still an 
% the question of whether.... is still open

%generalize the REM-like trap model of ref. [14] (corresponding to a ?one-step? replica symmetry breaking (RSB) scheme [15]) to a fully foliated tree structure (?full replica symmetry breaking? [12]). A large amount of pa- pers already studied various types of dynamics on trees [19], directly inspired from Parisi?s ultrametric construction. 
%
%
%standard picture of the spin glass phase typically involves a highly complex landscape of the free energy function exhibiting many nested valleys organized according to some hierarchical tree structure
%
%describe the dynamics 
% We do expect that on longer time scales this breaks down.

%this REM-like behavior breaks down ``REM universality regime'' 
%type of behavior breaks down

%to describe dynamics on long times scales in terms of thermally activated barrier crossings.
%the aging regime present in the REM is also present in $p$-spin SK models in a restricted domain of the time-scales and temperatures
% more recently

%from which a detailed picture of the aging behavior of the REM 
%
%resulting in a detailed picture of the aging behavior of the REM 
%
%providing a detailed picture 

%The first rigorous connection between the microscopic dynamics of a spin system and a trap model was established in \cite{} in  \cite{} for the REM for a particular Glauber dynamics of the REM

%Main model that they intend to describe are 

%\subsection{Description of the model.} 
\subsection{Sasaki and Nemoto GREM-like trap model \cite{SN00a}}

\newcommand{\M}{\mathcal{M}}

We start by specifying the underlying tree structure. For $n\in\N$, we write $[n]=\{1,\dots,n\}$. Let $L\in\N$ be fixed and set $V|_k=[n]^k$ for $k=1,\dots,L$.  We define the rooted $L$-level perfect $n$-ary tree by
\begin{equation}
 \tre=\bigcup_{k=0}^L V|_k,
\end{equation}
where $V|_0=\{\emptyset\}$ is the root. We use the notation $\mu|_{k}=\mu_1\mu_2\cdots\mu_k$ for a generic element of $V|_k$. We sometimes simply use $\mu$ for $\mu|_L\in V|_L$. By convention the root belongs to the 0-th generation of the tree and $\mu|_k$ to the $k$-th generation. For $0\leq k_1<k_2\leq L$, we say that $\mu|_{k_1}\in V|_{k_1}$ is an ancestor of $\mu'|_{k_2}\in V|_{k_1}$ if $\mu|_{k_1}=\mu'|_{k_1}$.  The set $V|_L$
consists of the leaves of the tree, that is, the vertices that do not have any offspring. 
Whenever convenient we add the root to the notation by writing 
$\mu|_k=\mu_0\mu_1\cdots\mu_k$, where $\mu_0\equiv\emptyset$. Note that the trees $\tre$ are parametrized by $n\in\N$. However, for notational convenience we do not keep $n$ in the notation.
\begin{figure}[H]
	\begin{center}
		\includegraphics[trim = 25mm 20cm 0mm 25mm, clip, scale=0.9]{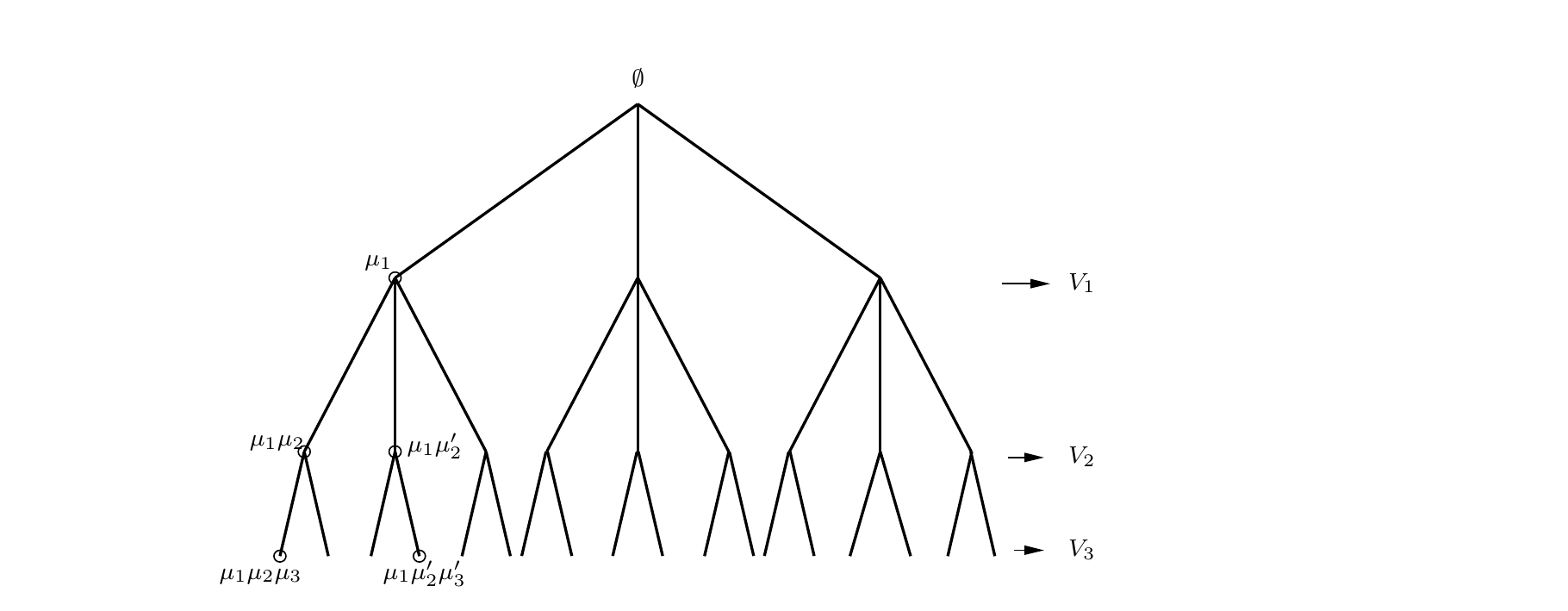}
	\end{center}
	\caption{A representation of $\tre$ with $L=n=3$.}
	\label{tree1}
\end{figure}
Given two vertices $\mu|_k,\mu'|_k\in V|_k$, we denote by 
\begin{equation}\label{glca}
 \big\langle\mu|_k,\mu'|_k\big\rangle = \max\bigl\{l: \mu|_l=\mu'|_l\bigr\},\quad \mu,\mu'\in V|_k,
\end{equation}
the generation of their last common ancestor (hereafter abbreviated g.l.c.a.).
The trapping landscape, or random environment, is a collection of independent random variables on the vertices of the tree $\tre$,
\newcommand{\kl}[1]{{{#1,L}}}
\begin{equation}\label{trapcoll}
\bigl\{\lambda(\mu|_k):\;\mu\in{V}|_L,\;k=1,\dots,L\bigr\},
\end{equation}
where $\lambda(\emptyset)\equiv 0$ and, for $k=1,\dots,L$, 
%the $\lambda^{-1}(\mu|_k)$'s are heavy tailed:
\begin{equation}\label{distlam}
\mathbb{P}\left(\lambda^{-1}(\mu|_k)\geq u\right)=u^{-\alpha_\kl{k}},\;\;\;u\geq 1.
\end{equation}
Here, the $\alpha_{k,L}$'s are real numbers satisfying
\begin{equation}\label{collalpha}
 0<\alpha_\kl{1}<\alpha_\kl{2}<\cdots<\alpha_\kl{L}<1,
\end{equation}
which implies that the $\lambda^{-1}$'s are heavy tailed.
We assume that the two-parameter sequences (in $n$ and $L$) of random environments are independent and 
defined on a common probability space $(\Omega,\mathcal{F},\mathbb{P})$. We denote by $\E$ the expectation  under $\mathbb{P}$.

For $k=1,\dots,L$, let $Y|_k$ be the discrete time Markov chain on $V|_k$ with transition probabilities 
\begin{equation}\label{transrates}
W_{k}(\mu|_k,\mu'|_k)=\sum_{l=0}^{\langle\mu|_k,\mu'|_k\rangle\wedge (k-1)} \frac
{\big(1-\lambda(\mu|_l)\big) \prod_{l'=l+1}^{k-1}\lambda(\mu|_{l'})}{n^{k-l}},
\end{equation}
with the convention that $\prod_{j=k}^{k-1}\lambda(\mu|_{j})=1$.
The GREM-like trap model, denoted by $(X_L(t):t\geq 0)$, is a continuous time Markov jump process on the set of leaves of the tree, $V|_L$, whose transition rates are given by
\begin{equation}\label{rates}
w_L(\mu,\mu')=\lambda(\mu)W_L(\mu,\mu').
\end{equation}
Thus, we see that after waiting at a leaf $\mu$ an exponential time with mean value $\lambda^{-1}(\mu)$, the process jumps to $\mu'$ with probability $W_L(\mu,\mu')$. We also see that $X_L$  is a reversible process whose unique invariant measure assigns to $\mu\in V|_L$ the mass $\prod_{k=1}^L\lambda^{-1}(\mu|_k)$.
%The measure that assigns to $\mu\in V|_L$ the mass $\prod_{k=1}^L\lambda^{-1}(\mu|_k)$ is the unique reversible invariant measure of $X_L$.
% indeed  $\sum_{\mu'}W_L(\mu,\mu')=1$.
%This model was first introduced by Sasaki and Nemoto in \cite{SN00a}. In the

A comment on the form of the transition probabilities  $W_L(\mu,\mu')$  is now in order. 
Note that $\lambda(\mu|_l)\in[0,1]$, $1\leq l\leq L$, so that the product
$
%p_l(\lambda(\mu))\equiv
(1-\lambda(\mu|_l))\prod_{l'=l+1}^{L-1}\lambda(\mu|_{l'})
$
appearing in the summation in (\ref{transrates}) is a probability. In physicists' term, this is the probability that, along the transition from $\mu$ to $\mu'$, the system is activated from $\mu$ to $\mu|_{l+1}$ but not from $\mu|_{l+1}$ to $\mu|_l$, meaning that the process jumps out of the traps attached to the vertices $\mu|_{l'}$, $l+1\leq l'\leq L-1$, but stays stranded in the trap attached to $\mu|_l$.
% in the trap attached to $\mu|_l$. 
The process then choses its next state uniformly at random among the $n^{L-l}$ leaves descending from $\mu|_l$.
% (This is uniform choice assumption is a main simplifying features of the trap model approach.)

Throughout this paper the initial distribution of $X_L$ is taken to be the uniform distribution on $V|_L$. 
We embed the distributions of the chains $X_L(t)$ for each $n\in\N$ and $L\in\N$, on a common probability space  whose distribution and expectation
we denote by $\mathcal{P}$ and $\mathcal{E}$, respectively. We suppress any references to the trapping landscape in the notation.

\iffalse
jumps the following way: first it draws a copy of the random variable $g_{\mu}$ (each time independent from everything else); then, if $g_{\mu}=k$, recalling that $\mu|_k$ denotes the unique ancestor of $\mu$ on the $k$-th level, $X_L$ 
moves uniformly at random to one of the descendants of $\mu|_k$ on $V_L$. Note that, the description we gave in the introduction of how the dynamics chooses its jumps through climbing through the tree follows from (\ref{gmu}). We say that a jump of the process from a leaf $\mu$ is on the $k$-th level if it is chosen from the descendants of the ancestor of $\mu$ on the $(k-1)$-th level (equivalently if $g_\mu=k-1$), and beyond and including the $k$-th level if it is a jump on $l$-th level with $l\leq k$. Note that, the mean waiting time of the dynamics belongs to the last level of the tree, and the effect of the environment of other levels is through the transition probabilities. More precisely, the number of trials at $\mu|_k$ until the dynamics is accepted to go up of has a geometric distribution with mean $\lambda^{-1}(\mu|_k)$. In terms of traps, $\lambda^{-1}(\mu|_k)$ denotes the depth of the trap at $\mu|_k$.

\fi 

The two-time correlation function that we use to quantify aging in the GREM-like trap model is defined as follows. For $k=1,\dots,L$, we first set  
\begin{equation}\label{decorfunc}
\Pi_k(t,s)=\mathcal{P}\left(\big\langle X_L(t),X_L(t+u)\big\rangle\geq k,\,\forall u \in [0,s]\right),\;\;t,s>0.
\end{equation}
We next choose a non-decreasing smooth function $q:[0,1]\to[0,1]$ with $q(0)=0$ and $q(1)=1$, and then use it to define the two-time correlation function
\begin{equation}\label{overlap}
C_L(t,s)=\sum_{k=1}^L \left[q\left(\frac{k}{L}\right)-q\left(\frac{k-1}{L}\right)\right] \Pi_k(t,s).
\end{equation}

Let us point out the key connections between the GREM-like trap model and the GREM (or the CREM, its generalization to continuous hierarchies).
%  -- focusing on the GREM for the sake of simplicity. 
 For this we rely on the paper \cite{BK07} by Bovier and Kurkova, that reviews the state of the art on the statics of these models. Using (\ref{glca}) one naturally defines a distance, $1-L^{-1}\big\langle\mu,\mu'\big\rangle$, $\mu,\mu'\in V|_L$, on the set of leaves of the tree, which is nothing but the ultrametric distance  (see (1.1) in \cite{BK07}) that endows the space of spin configurations of the GREM. 
Observe moreover that the function $q$ in  (\ref{overlap}) is the analogue of the function $A$ that enters the definition of
the covariance structure of the GREM (see (1.2) in \cite{BK07}).
Thus $C_L(t,s)$ in (\ref{decorfunc}) naturally plays the role of the spin-spin correlation function between two configurations of some microscopic GREM dynamics observed at times $t$ and $t+s$.
Let us now turn to the trapping landscape (\ref{trapcoll}). Its key features are modeled on
%inspired  from
those of the point process of extremes of the GREM's Botzman weights at  temperature $T$.
%, $e^{ H_n(\sigma)/T}$, where $H_n(\sigma)$ is the Hamiltonian and $T$ the temperature). 
Under certain conditions on the parameters of the model, this process is known to converge, as the system's size diverges, to  Ruelle's (non-normalized) Poisson cascades,
%(see Section 3 of \cite{BK07}), 
a hierarchical point process constructed from a collection of Poisson point processes of intensity $c_kx^{-(1+T/T_k)}dx$, one for each level, $k$, of the hierarchy; here $T_k$ is an associated critical temperature, and all processes that may result from this limiting procedure must be such that $T_1>\cdots>T_L$ (see Theorem 2.3, Theorem 3.2, and definition 3.1 in 
%or (2.8) 
in \cite{BK07}).
%The low temperature regime is then characterized by
Hence, at low enough temperature, $T/T_1<\cdots<T/T_L<1$.
%the $k$-th level process being a Poisson point process of intensity $c_kx^{-(1+T/T_k)}dx$, where $T_k$ is an associated critical temperature, and all processes that may result from this limiting procedure must be such that $T_1>\cdots>T_L$ (see Theorem 2.3, Theorem 3.2, and definition 3.1 in \cite{BK07}).
Setting $\alpha_{k,L}=T/T_k$ in (\ref{collalpha}), the non-normalized invariant measure $\prod_{k=1}^L\lambda^{-1}(\mu|_k) $, $\mu\in V|_L$, of the process $X_L$ in (\ref{rates}) models Ruelle's (non-normalized) cascade, while each $\lambda^{-1}(\mu|_k) $ models the depth of a trap at $\mu|_k$.
%the process must jump out in order to move to a leaf $\mu|_L$ that does not descend from $\mu|_k$.
Based on heuristic ideas derived from metastability, the GREM dynamics is then replaced in an ad hoc manner by the process $X_L$. We refer to Section 1.2 of \cite{BBG03b} for a more precise explanation of the correspondence between microscopic dynamics and trap models in the $1$-level (REM-like) trap model.
Note finally that in the GREM, the size of the hypercubes of different levels can vary and this correspond to varying branch sizes of the tree in the GREM-like trap model. Our results are valid for a range of trees with varying branch sizes, however, in order to keep the notations simple we opted to work with regular trees.

\begin{remark}
	Observe that the two-time correlation function $C_L$ can be expressed in terms of the distribution of the overlap observed by the dynamics. More precisely,
	\begin{equation}
	C_L(t,s)=\mathcal{E}\left(q\bigl(T_L(t,s)\bigr)\right),
	\end{equation}
	where
	\begin{equation}\label{zlts}
	T_L(t,s)=\sup\left\{l:\frac{1}{L}\langle X_L(t),X_L(t+u)\rangle\geq l,\,\forall u \in [0,s]\right\}.
	\end{equation}
Indeed, in our proofs we identify the limiting distribution of $(\ref{zlts})$, and thus, our results are valid for any choice of $q$.
\end{remark}
Following the by now well-established strategy to analyze the aging properties of disordered systems, for each level $k$, we introduce the so-called $k$-th level clock process, $S_{k,L}$, that is the partial sum process defined through
\begin{equation}
X_{L}(t)\big|_k=Y(S_{k,L}^{\leftarrow}(t))|_k,\quad t\geq 0.
\end{equation}
In contrast with earlier works, in order to fully describe the behavior of the two-time correlation function we need to control a whole collection of clock processes, one for each level of the tree.

The following description of $S_{k,L}$ is going to be useful. We say that a jump of $X_{L}$ is from a level $k\in\{1,\dots,L\}$, if the last activated vertex is on the $(k-1)$-th level of the tree, and that a jump is beyond and including the $k$th level if it is a jump from a level in $\{1,\dots,k\}$. Then, ${S}_{k,L}(i)$ is the time it takes for the dynamics $X_L$ to make $i$ jumps beyond and including the $k$-th level of the tree.

\iffalse
Note that, the event inside the probability term in (\ref{decorfunc}) is that the process does not jump beyond and including the $k$-th level of the tree on the whole time interval $[t,t+s]$. Another natural choice of event for (\ref{decorfunc}) is that the projection of the dynamics onto the $k$-th is at the same vertex at times $t$ and $t+s$. Our proofs for the aging levels are valid also with this choice of correlation function, with the same limiting aging functions. However, for the non-aging levels, although totally tractable, the results would be more complicated. Hence, we choose to work with the former, leading to more compact results.
\fi

\newcommand{\LL}{{(L)}}

\subsection{Convergence of the two-time correlation function.}
To study aging one has to choose a time scale $c_n$ on which the dynamics is observed. In this paper we work with $c_n$ of the form
\begin{equation}
 \tn=n^{\rho},\;\;\rho>0.
\end{equation}
We then say that the system is aging on the $k$-th level, for $\theta>0$, if $\Pi_k(\tn,\theta\tn)$ has a non-trivial limit that depends only on $\theta$, and that it is non-aging on the $k$-th level if $\Pi_k(\tn,\theta\tn)$ converges to 0. 
For $k=1,\dots,L$, we set
\qw\label{crtscales}
d_\kl{k}=\frac{1}{\alpha_\kl{L}}+\frac{1}{\alpha_\kl{L-1}}+\cdots \frac{1}{\alpha_\kl{k}}-(L-k).
\qwe
Note that by (\ref{collalpha}) 
\begin{equation}\label{ineqds}
 d_\kl{1}>d_{\kl{2}}>\cdots>d_\kl{L}>1.
\end{equation}
For a given exponent $\rho$, we define $\lag(\rho)\in\{1,\dots,L\}$ by
\begin{equation}\label{aglevels}
\lag(\rho)=\sup\{k:\rho< d_{k,L}\}.
\end{equation}  
For $\alpha\in(0,1)$ let $Asl_{\alpha}$ be the classical arcsine law distribution function
\begin{equation}
 Asl_{\alpha}(u)=\frac{\sin \alpha\pi}{\pi}\int_0^u x^{-\alpha}(1-x)^{\alpha-1}dx,\;\;u\in[0,1].
\end{equation}
\begin{theorem}\label{thrm1}
For any fixed $L\in\N$ and $\rho\in(0,d_{1,L})\setminus\{d_{L,L},\dots,d_{2,L}\}$ set $\lag=\lag(\rho)$ and $\bar{\alpha}_k=\prod_{i=k}^{\lag}\alpha_{i,L}$. There exists a subset $\Omega_L\subseteq \Omega$ with $\mathbb{P}(\Omega_L)=1$ such that for any environment in $\Omega_L$, for any $\theta>0$, 
\begin{equation}\label{corrlimit}
\lim_{n\to\infty}C_L\Big(\tn,\theta\tn\Big)=\sum_{k=1}^{\lag}\left[q\left(\frac{k}{L}\right)-q\left(\frac{k-1}{L}\right)\right] 
Asl_{\bar{\alpha}_{k}}\Big(\frac{1}{1+\theta}\Big).
\end{equation}
\end{theorem}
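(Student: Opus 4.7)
\textbf{Proof plan for Theorem \ref{thrm1}.}

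The plan is to exploit the linear decomposition
\[
C_L(\tn,\theta\tn)=\sum_{k=1}^L\Bigl[q(k/L)-q((k-1)/L)\Bigr]\Pi_k(\tn,\theta\tn)
\]
and to treat each term $\Pi_k$ separately according to whether $k$ is an aging level ($k\leq\lag$) or a non-aging one ($k>\lag$). First, I would rewrite the event in \eqref{decorfunc} defining $\Pi_k(\tn,\theta\tn)$ in terms of the $k$-th level clock process $S_{k,L}$: the dynamics has g.l.c.a.\ $\geq k$ on the whole time interval $[\tn,(1+\theta)\tn]$ if and only if $[\tn,(1+\theta)\tn]$ is entirely contained in one of the ``inter-jump" intervals $[S_{k,L}(i-1),S_{k,L}(i))$ of the clock process. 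Normalizing by $\tn$ and by the appropriate index scale $\aan{k}$, this event translates, in terms of the rescaled clock process $S_{k,L}^\nn(u)=S_{k,L}(\lfloor\aan{k}u\rfloor)/\tn$, into the event that the graph of $S_{k,L}^\nn$ has a single jump that carries it from a level $\leq 1$ to a level $\geq 1+\theta$.

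Next, for each $k\leq \lag$, I would establish the joint convergence of the rescaled clock processes
\[
\bigl(S^\nn_{1,L},\dots,S^\nn_{\lag,L}\bigr)\;\xrightarrow[n\to\infty]{}\;
\bigl(V_{\al{\lag,L}}\!\circ\!\cdots\!\circ V_{\al{1,L}},\,\dots,\,V_{\al{\lag,L}}\!\circ V_{\al{\lag-1,L}},\,V_{\al{\lag,L}}\bigr)
\]
claimed in the introduction, for almost every environment. The natural route is to pass to the jump point process of $X_L$ coded level-by-level: set up, for each level $k\leq\lag$, the point process of rescaled holding times together with the level labels of the ensuing jump; show (via the extreme value theory for i.i.d.\ heavy-tailed variables of index $\al{k,L}$ and the Glivenko--Cantelli type control made possible by the tree structure) that these point processes jointly converge to independent Poisson point processes with intensities of stable type; then obtain the composition structure in the limit from the fact that a jump beyond level $k$ decomposes into a geometric number of trials at levels $k,k+1,\dots,L$, and the subordination scheme for stable subordinators produces exactly the compound form displayed above. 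Finally, a continuous mapping argument (using the $M_1$ topology of Skorokhod, well suited to partial sums of heavy-tailed variables) lifts the point process convergence to clock process convergence.

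Once joint clock process convergence is in hand, I would treat aging levels $k\leq\lag$ by the classical Dynkin--Lamperti arcsine law. Indeed, by the subordination scheme the marginal $V_{\al{\lag,L}}\circ\cdots\circ V_{\al{k,L}}$ is an $\bar\alpha_k$-stable subordinator, and the probability that such a subordinator has a single jump straddling the values $1$ and $1+\theta$ equals $Asl_{\bar\alpha_k}\bigl(1/(1+\theta)\bigr)$; combined with the continuity of this functional on the set of monotone càdlàg paths (outside a measure-zero set of paths), this gives
\[
\lim_{n\to\infty}\Pi_k(\tn,\theta\tn)=Asl_{\bar\alpha_k}\!\left(\tfrac{1}{1+\theta}\right),\qquad k=1,\dots,\lag.
\]
For non-aging levels $k>\lag$, the definition \eqref{aglevels} of $\lag(\rho)$ gives $\rho\geq d_{k,L}$, which means that the typical time between jumps beyond level $k$ is of order $\aan{k}^{-1}c_n\to 0$ after division by $\tn$. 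A first moment estimate on the number of jumps beyond level $k$ in a time window of length $\theta\tn$, using the reversibility of $X_L$ and the explicit invariant weights $\prod_{l\leq L}\lambda^{-1}(\mu|_l)$ together with the ergodic properties of $Y|_k$, gives $\Pi_k(\tn,\theta\tn)\to 0$ almost surely in the environment. Summing the contributions over $k$ and invoking the telescoping structure of the weights $q(k/L)-q((k-1)/L)$ yields the stated limit \eqref{corrlimit}, with $\Omega_L$ the full-measure set on which the joint clock process convergence and the non-aging estimates hold.

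The main obstacle will be Step 2, namely establishing the almost sure (in the environment) joint convergence of the rescaled clock processes to the prescribed composition of independent stable subordinators. The subtleties are twofold: one has to rule out, almost surely, spurious concentration of mass at lower generations of the tree produced by rare deep traps (which is why the threshold $\lag$ appears), and one has to obtain independence across levels in the limit despite the fact that the holding times at a leaf $\mu|_L$ see all the depths $\lambda^{-1}(\mu|_k)$, $k=1,\dots,L$. The independence follows from the hierarchical, multiplicatively separated decay rates $\aan{1}\ll\aan{2}\ll\dots\ll\aan{\lag}$ forced by \eqref{collalpha}--\eqref{ineqds}, but making this quantitative is the technical heart of the proof.
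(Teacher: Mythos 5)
Your overall architecture coincides with the paper's: decompose $C_L$ into the $\Pi_k$'s, rewrite $\Pi_k(t,s)$ as the probability that the range of the $k$-th clock process avoids $[t,t+s]$ (the paper additionally records an $O(1/n)$ correction for the chain $Y|_k$ accidentally returning to the same vertex, which you omit but which is harmless), prove joint convergence of the rescaled clock processes $(S^{(n)}_{1,L},\dots,S^{(n)}_{\lag,L})$ to compositions of independent stable subordinators via convergence of a cascade of point processes plus a continuous mapping argument (the paper works in $J_1$ with truncated functionals rather than $M_1$, but that is a matter of implementation), and conclude the aging levels by the Dynkin--Lamperti arcsine law for the range of an $\bar\alpha_k$-stable subordinator. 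For $k\le\lag$ your plan is essentially the paper's proof.

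The genuine gap is in your treatment of the non-aging levels $k>\lag$. You propose ``a first moment estimate on the number of jumps beyond level $k$ in a time window of length $\theta\tn$''; but $\Pi_k(\tn,\theta\tn)$ is the probability of \emph{zero} such jumps, and a divergent expected number of jumps does not force that probability to vanish --- the jumps could all be carried by a small-probability event, which is exactly what happens at the aging levels. What must actually be ruled out is that a \emph{single} increment of $S_{\lag+1,L}$ (the total sojourn time beneath one level-$(\lag+1)$ vertex, which is itself a heavy-tailed sum over the lower levels) is of order $\tn$. The paper does this by first using the identity $S_{\lag,L}=S_{\lag+1,L}\circ S_{\lag,\lag}$ together with Theorem \ref{fixL} to show that, with high probability, at most $T\,a_n(\lag+1)$ increments of $S_{\lag+1,L}$ are needed to cover the window $[\tn,(1+2\theta)\tn]$, and then by a truncation argument (the events $A_n$, $B_n$ and the set $W^{(n)}$, controlled via the tail bound $\mathbb{P}(\mu|_{\lag}\notin W^{(n)})\le Cn^{-\alpha_L c/2}$, Lemma \ref{lemanara} and Bennett's inequality) to show that none of these increments reaches size $\theta\tn/2$: a macroscopic increment would require simultaneously deep traps at every level $\lag+1,\dots,L$ along one branch, which is $\mathbb{P}$-a.s.\ absent among the relevant $O(a_n(\lag+1))$ visited vertices. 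Your appeal to reversibility and the invariant weights does not substitute for this, since the process is started from the uniform distribution and the invariant measure is itself governed by the same heavy tails; without the single-increment control your argument for $\Pi_k\to0$, $k>\lag$, does not close. Also note that the monotonicity $\Pi_{k+1}\le\Pi_k$ lets one treat only $k=\lag+1$, which simplifies this step.
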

\begin{remark}
	Indeed, we are going to prove that, under the assumptions of Theorem \ref{thrm1}, for $k=1,\dots,l^*$, $\Pi_k(c_n,\theta c_n)\to Asl_{\bar{\alpha}_{k}}\Big(\frac{1}{1+\theta}\Big)$, and for $k=l^*(\rho)+1,\dots,L$, $\Pi_{k}(c_n,\theta c_n)\to 0$. 
\end{remark}
The collection in (\ref{crtscales}) can be seen as critical time scale exponents. More precisely, as a consequence of Theorem \ref{thrm1}, if $\rho<d_{k,L}$, the dynamics is aging on the $k$-th level, and if $\rho>d_{k,L}$, it is non-aging on the $k$-th level. Then, the levels $1,\dots,\lag(\rho)$ are aging whereas the levels below $\lag(\rho)$ are non-aging, and the inequalities in (\ref{ineqds}) reflect that as the time scale of observation gets longer the aging behavior disappears from bottom to the top of the tree.

Next, we take the $L\to\infty$ limit of the limiting two-time correlation function in (\ref{corrlimit}). In order to do this, we need to define a family of collections of $\alpha$ parameters satisfying (\ref{collalpha}). Let $R:[0,1]\to[0,\infty)$ be a strictly increasing, strictly concave, smooth function with $R(0)=0$. 
For $L\in\N$, set
\begin{equation}\label{Ralphas}
 \alpha_\kl{k}=\exp\Big\{-\Big(R\big(k/L\big)-R\big((k-1)/L\big)\Big)\Big\},\quad k=1,\dots,L.
\end{equation}
As a result, the collection $\{\alpha_{1,L},\dots,\alpha_{L,L}\}$ satisfies (\ref{collalpha}) for all $L\in\N$. Note that although for a given collection $\{\alpha_{1,L},\dots,\alpha_{L,L}\}$ as in (\ref{collalpha}), one can find a function $R$ satisfying (\ref{Ralphas}), it is not necessarily possible to do that for a given family of such collections $\{\alpha_{1,L},\dots,\alpha_{L,L}:\;L\in\N\}$. Also observe that, in the light of Theorem \ref{thrm1}, the choice of (\ref{Ralphas}) is natural for an infinite levels limit because the infinite product of $\alpha$ parameters is non-trivial.

For a given exponent $\rho>0$, we set
\begin{equation}\label{contla}
r^*(\rho)=\sup\{s\geq 0: R(1)-R(s)+1>\rho\}.
\end{equation}
We will see in Section \ref{Section3} (see (\ref{sonispat2}) and the paragraph before it) that $r^*(\rho)$ is the limit of the ratio $\lag(\rho)/L$ as $L$ diverges. As a result, there is aging for $\rho$ such that $r^*(\rho)>0$. Since $R(0)=0$, the latter is equivalent to $\rho<R(1)+1$.
\begin{corollary}\label{corol1}
Let $\rho\in(0,R(1)+1)\setminus \{d_{L,L},\dots,d_{2,L}\}$ for all $L$ large enough. Set $r^*=r^*(\rho)$ and let 
$\alpha(x)=\exp\big\{-(R(r^*)-R(x))\big\}$ for $x\in[0,r^*]$. There exists a subset $\Omega'\subseteq \Omega$ with $\mathbb{P}(\Omega')=1$ such that for any environment in $\Omega'$, for any $\theta>0$,
\begin{equation}\label{eqcorol1}
\lim_{L\to\infty}\lim_{n\to\infty}C_L\Big(\tn,\theta\tn\Big)=\int_0^{r^*}q'(x) Asl_{\alpha(x)}\Big(\frac{1}{1+\theta}\Big)dx.
\end{equation}
\end{corollary}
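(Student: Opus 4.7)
\textbf{Proof proposal for Corollary \ref{corol1}.}
The plan is to first fix each $L$ and use Theorem \ref{thrm1} to replace $\lim_{n\to\infty}C_L(c_n,\theta c_n)$ by its explicit expression, then recognize the resulting sum over levels as a Riemann sum for the integral on the right-hand side of \eqref{eqcorol1}. Set $\Omega'=\bigcap_{L\in\N}\Omega_L$, which has probability one, and work on this event for the rest of the argument. Writing $q(k/L)-q((k-1)/L)=\int_{(k-1)/L}^{k/L}q'(x)\,dx$ and denoting by $k_L(x)=\lceil xL\rceil$ the level associated with $x\in(0,1]$, Theorem \ref{thrm1} gives, for $\omega\in\Omega'$,
\begin{equation*}
\lim_{n\to\infty}C_L(c_n,\theta c_n)=\int_0^{l^*(\rho)/L} q'(x)\,Asl_{\bar{\alpha}_{k_L(x)}}\!\Big(\tfrac{1}{1+\theta}\Big)\,dx,
\end{equation*}
so everything reduces to a deterministic analysis of the integrand as $L\to\infty$.

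The crucial step is to identify the two asymptotics $l^*(\rho)/L\to r^*(\rho)$ and $\bar{\alpha}_{k_L(x)}\to\alpha(x)$. Using the defining relation \eqref{Ralphas}, a Taylor expansion of the exponential gives $\alpha_{i,L}^{-1}-1=R(i/L)-R((i-1)/L)+O(L^{-2})$ uniformly in $i$, since $R$ is smooth on $[0,1]$. Plugging this into \eqref{crtscales} and telescoping yields
\begin{equation*}
d_{k,L}=1+R(1)-R\big((k-1)/L\big)+O(L^{-1}),
\end{equation*}
uniformly in $k$. Combined with the definition \eqref{aglevels} of $l^*(\rho)$ and the definition \eqref{contla} of $r^*(\rho)$, and using that $R$ is strictly increasing, this gives $l^*(\rho)/L\to r^*(\rho)$ (here the hypothesis that $\rho$ avoids the critical exponents $d_{L,L},\dots,d_{2,L}$ for all large $L$ prevents boundary ambiguities). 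A second telescoping, exploiting the product structure of \eqref{Ralphas}, gives directly
\begin{equation*}
\bar{\alpha}_{k_L(x)}=\exp\!\big\{-\big(R(l^*(\rho)/L)-R((k_L(x)-1)/L)\big)\big\}\xrightarrow[L\to\infty]{}\exp\{-(R(r^*)-R(x))\}=\alpha(x)
\end{equation*}
for every $x\in(0,r^*)$, uniformly on compact subsets of $(0,r^*)$.

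To conclude, I would invoke dominated convergence. The integrand is bounded by $\|q'\|_\infty$ (since $Asl_\alpha$ takes values in $[0,1]$), so the constant $\|q'\|_\infty\mathbf{1}_{[0,1]}$ is an integrable envelope. For pointwise convergence at $x\in(0,r^*)$, since $Asl_\alpha(u)$ is jointly continuous in $(\alpha,u)\in(0,1)\times(0,1)$ (by continuity of $\alpha\mapsto\sin(\alpha\pi)$ and dominated convergence in its integral representation) and $\alpha(x)\in(0,1)$ for $x\in(0,r^*)$ by strict concavity of $R$, one has $Asl_{\bar{\alpha}_{k_L(x)}}(\tfrac{1}{1+\theta})\to Asl_{\alpha(x)}(\tfrac{1}{1+\theta})$. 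The upper limit of integration $l^*(\rho)/L$ converging to $r^*(\rho)$ contributes a vanishing boundary term since the integrand is uniformly bounded. Putting everything together gives \eqref{eqcorol1}.

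The main technical nuisance, rather than a genuine obstacle, is controlling the behavior of $\bar{\alpha}_{k_L(x)}$ near $x=r^*$, where $\alpha(x)\to 1$ and $Asl_\alpha$ becomes degenerate; however, the dominated convergence bound above absorbs this since the Lebesgue measure of a neighborhood of $r^*$ is small. The exclusion of the critical exponents in the hypothesis is exactly what one needs to ensure that $l^*(\rho)/L$ is eventually equal to its limiting integer part, so no oscillation in the level cutoff persists.
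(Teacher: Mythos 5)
Your proposal is correct and follows essentially the same route as the paper: apply Theorem \ref{thrm1} on $\Omega'=\cap_L\Omega_L$, show $d_{k,L}=1+R(1)-R((k-1)/L)+o(1)$ to get $\lag(\rho)/L\to r^*(\rho)$, telescope the product defining $\bar{\alpha}_k$ to get $\bar{\alpha}_{k_L(x)}\to\alpha(x)$, and pass to the limit in the Riemann sum. Your treatment is in fact slightly more explicit than the paper's at the final step (the dominated-convergence justification and the boundary behavior near $x=r^*$, which the paper dispatches with a one-line appeal to Riemann-sum convergence).
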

\begin{remark}
	The result in Corollary \ref{contla} can be extended to functions $q$ that are right continuous and left limits. However, since the formula in \ref{eqcorol1} is very transparent and neat, we use only smooth functions.
\end{remark}
Let us discuss the connection of our results with earlier literature and comment about some directions for future research. Observe that our results do not cover the time scales $c_n\sim n^{d_{k,L}}$. Moreover, varying branch sizes can yield time scales that are critical for several levels simultaneously. For these levels the inequalities in (\ref{ineqds}) become equalities. The cases where the branch sizes are fine tuned in such a way that all the critical time scale exponents are the same were studied in \cite{FGG12}. There, the authors constructed a $K$ process in an infinite $L$-level tree and proved that for any critical time scale, the scaling limit of the GREM-like trap model exists and is given by such an $L$-level $K$ process. In \cite{FP14}, the authors constructed, for a special choice of parameters, an infinite levels, infinite volume $K$ process and showed that it is the limit $L\to\infty$ of the $L$-level $K$ process obtained in \cite{FGG12}. An investigation of intermediate cases, where there are times scales that are critical for some levels and aging for others, would require a non-trivial combination of analysis of this paper and that of \cite{FGG12}. 

\subsection{Convergence of the clock processes.}
In this section we state our results on the convergence of the clock processes. We express the limiting clock processes using Neveu's CSBP which is a time-homogeneous Markov process $(W(r):r\geq 0)$ whose semigroup is characterized by
\begin{equation}\label{lapW}
E\left[e^{-\kappa W(r)}|W(0)=t\right]=\exp\big({-t \kappa^{e^{-r}}}\big),\quad\kappa>0,t\geq 0.
\end{equation}
We write $W(r,t)$ for $W$ starting from $t\geq 0$, that is, $W(0,t)=t$. Using Kolmogorov's extension theorem one can construct a process $(W(r,t):r,t\geq 0)$ such that $W(\cdot,0)\equiv 0$ and, $W(\cdot,t+s)-W(\cdot,t)$ is independent of $(W(\cdot,c):0\leq c\leq t)$ and has the same law as $W(\cdot,s)$. Hence, for any fix $r\geq 0$, the right continuous version of $W(r,\cdot)$ has independent, stationary increments.\,From its Laplace transform we see that it is an $e^{-r}$-stable subordinator with Laplace exponent $\kappa^{e^{-r}}$. 
Moreover, by (\ref{lapW}), $W(r+p,\cdot)$ has the same distribution as the Bochner subordination of $W(r,\cdot)$ with the directing process
$W'(p,\cdot)$, where $W'(p,\cdot)$ is an independent copy of $W(p,\cdot)$, that is,
\begin{equation}\label{bochner}
W(r+p,\cdot)\overset{d}=W(r,W'(p,\cdot)).
\end{equation}    
The above description is taken from \cite{BL00} where it is pointed out that, in general, (\ref{bochner}) allows to connect CSBPs and Bochner's subordination, a connection developed further in the following:
\begin{proposition}[Proposition 1 in \cite{BL00} applied to Neveu's CSBP]\label{bohsub} On some probability space there exists a process $\big(Z_{p,r}(t):0\leq p\leq r,\text{ and }t\geq 0\big)$ such that:
\item{(i)} For every $0\leq p\leq r$, $Z_{p,r}=\big(Z_{p,r}(t):t\geq 0\big)$ is an $e^{-(r-p)}$-stable subordinator with Laplace exponent $\kappa^{e^{-(r-p)}}$.
\item{(ii)} For any integer $m\geq 2$ and $0\leq r_1\leq \cdots\leq r_m$ the subordinators $Z_{r_1,r_2},Z_{r_2,r_3},\dots,\\Z_{r_{m-1},r_{m}}$ are independent and
\begin{equation}\label{bohsubii}
Z_{r_1,r_m}(t)=Z_{r_{m-1},r_m}\circ\cdots\circ Z_{r_{1},r_2}(t),\;\;\;\forall t\geq 0 \;\;a.s.
\end{equation}
Finally, $\big\{Z_{0,r}(t):r\geq 0,t\geq 0\big\}$ and $\big\{W(r,t):r,t\geq 0\big\}$ have the same finite dimensional distributions.
\end{proposition}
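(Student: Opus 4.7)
My plan is to construct the family $\{Z_{p,r}(t)\}$ first on a countable dense grid of indices and then extend it to all pairs by approximation. The key input is the Bochner subordination identity (\ref{bochner}): if $V$ and $V'$ are independent $e^{-b}$- and $e^{-a}$-stable subordinators, then $V\circ V'$ is an $e^{-(a+b)}$-stable subordinator (both at the level of Laplace exponent and of distribution), which is precisely the combinatorial rule that makes properties (i) and (ii) compatible.

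Concretely, I would fix a countable dense set $D\subset [0,\infty)$ containing $0$. For each finite ordered subset $\Pi=\{r_0<\cdots<r_m\}\subset D$, declare the joint law of the increments $(Z^\Pi_{r_{i-1},r_i})_{i=1}^m$ to be that of mutually independent stable subordinators with Laplace exponents $\kappa^{e^{-(r_i-r_{i-1})}}$, and for $i<j$ set $Z^\Pi_{r_i,r_j}:=Z^\Pi_{r_{j-1},r_j}\circ\cdots\circ Z^\Pi_{r_i,r_{i+1}}$. Iterating (\ref{bochner}), the marginal law of each $Z^\Pi_{r_i,r_j}$ is that of an $e^{-(r_j-r_i)}$-stable subordinator, and the joint laws of $\{Z^\Pi_{r,s}\}_{r,s\in\Pi}$ are consistent under refinements $\Pi\subset\Pi'$. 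Kolmogorov's extension theorem then produces a probability space supporting a family $\{Z_{p,q}\}_{p,q\in D}$ on which (i) holds and the composition identity (\ref{bohsubii}) is satisfied almost surely for every finite partition in $D$.

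Next I would pass to the continuum by choosing, for any $0\le p\le r$, grid sequences $p_k\downarrow p$ and $r_k\uparrow r$ in $D$ and setting $Z_{p,r}(t):=\lim_k Z_{p_k,r_k}(t)$. Monotonicity in the index range, which is a consequence of the flow identity restricted to the grid, together with continuity of $\kappa^{e^{-(r_k-p_k)}}$ in its parameters, yields existence of the limit and identification of its law; after a single null-set modification, the composition identity (\ref{bohsubii}) extends to arbitrary finite partitions in $[0,\infty)$, and the independence of consecutive increments in (ii) is inherited from the grid construction. For the last claim of the proposition I would match Laplace transforms: (i) gives $E[e^{-\kappa Z_{0,r}(t)}]=\exp(-t\kappa^{e^{-r}})$, which is exactly (\ref{lapW}), and joint Laplace transforms over finite collections $(Z_{0,r_j}(t_j))_j$ factor through the composition structure in (ii) and thereby reproduce the Markov semigroup of Neveu's CSBP.

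The main obstacle I foresee is the grid-to-continuum step: preserving the pathwise composition identity in (ii) requires realizing the approximating subordinators on a common probability space so that $Z_{p_k,r_k}(t)\to Z_{p,r}(t)$ holds pathwise and not merely in law. This is typically handled by working throughout with càdlàg modifications and invoking a Skorokhod-type argument, using that stable subordinators are càdlàg and that composition preserves this regularity; everything else, being either a Kolmogorov extension or a Laplace-transform identification, is essentially routine once (\ref{bochner}) is set as the backbone of the construction.
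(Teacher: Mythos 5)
The paper itself offers no proof of this statement: it is imported verbatim as Proposition 1 of \cite{BL00}, so there is no internal argument to compare against and your construction must stand on its own. Its first half is essentially the strategy of \cite{BL00} and is sound in outline: for a finite partition you declare consecutive increments independent, define the remaining pairs by composition, and obtain refinement consistency from the subordination identity (\ref{bochner}); a Kolmogorov extension, c\`adl\`ag modifications in $t$, and the observation that the event in (\ref{bohsubii}) for a fixed partition and rational $t$ is determined by countably many coordinates then give (i) and (ii) on the grid. The identification of the finite-dimensional laws of $Z_{0,\cdot}$ with those of $W$ through (\ref{lapW}) is likewise fine.

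The genuine gap is the grid-to-continuum step. Your existence claim for $\lim_k Z_{p_k,r_k}(t)$ rests on ``monotonicity in the index range'', which is false: a stable subordinator with Laplace exponent $\kappa^{\alpha}$, $\alpha<1$, neither dominates nor is dominated by the identity (its value at $t$ is $t^{1/\alpha}$ times a positive stable variable with full support in $(0,\infty)$), so the decomposition $Z_{p_{k+1},r_{k+1}}=Z_{r_k,r_{k+1}}\circ Z_{p_k,r_k}\circ Z_{p_{k+1},p_k}$ yields no pathwise comparison between consecutive terms. What one gets for free is only $Z_{q,q'}(t)\to t$ in probability as $q'-q\to 0$; upgrading this to almost sure convergence of the composed sequence requires quantitative estimates and a Borel--Cantelli argument along a fast subsequence, none of which you supply, and a Skorokhod-representation argument would merely re-realize the limit in law on a new space, destroying the coupling that the flow identity needs. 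The cleanest repair is to drop the grid entirely: Kolmogorov's extension theorem applies to arbitrary index sets, so you can run your consistency argument directly over all real pairs $0\le p\le r$ (embedding any finite collection of pairs into a finite partition), take a c\`adl\`ag modification in $t$ for each fixed pair, and then verify (\ref{bohsubii}) for each fixed real partition exactly as on the grid --- first for rational $t$, then for all $t$ by right continuity. With that change your argument goes through.
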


To each level of the tree we assign a pair of sequences, $\aan{k}$ and $\ccn{k}$, defined by
\begin{equation}\label{scalrel}
\aan{k}=\left\{\begin{array}{ll}n^{1+\rho-d_{k,L}}&k=L,\dots,\lag+1,\\  n^{\bar{\alpha}_{k}(1+\rho-d_{\lag,L})}&k=\lag,\dots,1,\end{array}\right.
\end{equation}
and
\begin{equation}
\ccn{k}=\aan{k+1},\quad k=1,\dots,L-1, \quad \ccn{L}=\tn.
\end{equation}
Note that as $n\to\infty$, $a_n(k)\ll n$ and $\ccn{k}=\aan{k}^{1/\alpha_{k,L}}$ for all $k\leq \lag$ (i.e. for all the aging levels), whereas $\aan{k}\gg n$ and $\ccn{k}=\aan{k}n^{1/\alpha_{k,L}-1}$ for all $k\geq \lag+1$ (i.e. for all the non-aging levels). Moreover, under the assumptions of Theorem \ref{thrm1}, in both cases the decay or growth of $a_n(k)/n$ is at least polynomial. 

For $k=1,\dots,L$, we define the rescaled clock processes 
\begin{equation}\label{scalclock}
S^\nn_{k,L}(t)=\frac{S_{k,L}(\lfloor t \aan{k}\rfloor)}{\tn},\quad t\geq 0,
\end{equation}
where we set $S_{k,L}(0)=0$. Hence, $S^\nn_{k,L}\in D([0,\infty))$ where $D([0,\infty))$ denotes the space of c\`adl\`ag functions on $[0,\infty)$. The following is our main result on the convergence of clock processes. 

\begin{theorem}\label{fixL}
For any $L\in\N$ there exists a subset $\widetilde{\Omega}_L\subseteq\Omega$ with $\mathbb{P}(\wt\Omega_L)=1$ such that for some positive constants $b_{1,L},\dots,b_{\lag,L}$ setting
\begin{equation}
\widetilde{Z}_{k,\lag}(\cdot)=Z_{R((k-1)/L),R(\lag/L)}(b_{k,L}\,\cdot),
\end{equation}
for any environment in $\wt\Omega_L$, as $n\to\infty$
\begin{equation}\label{eqfixL}
\Big(\sll{k}{L}^\nn:\;k=1,\dots,\lag\Big)\Longrightarrow \Big(\widetilde{Z}_{k,\lag}:\;k=1,\dots,\lag\Big)
\end{equation}
weakly on the space $D^{\lag}([0,\infty))$ equipped with the product Skorohod $J_1$ topology.
\end{theorem}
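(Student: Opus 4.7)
The plan is to obtain the joint convergence through a recursive subordination identity followed by quenched point-process convergence at each level. The starting observation is that every jump beyond-and-including level $k$ is a fortiori a jump beyond-and-including level $k+1$. Letting $M_{k,L}(i)$ count the number of jumps beyond level $k+1$ made by $X_L$ up to its $i$-th jump beyond level $k$, one has the deterministic identity $S_{k,L}(i)=S_{k+1,L}(M_{k,L}(i))$. Rescaling and using $c_n(k)=a_n(k+1)$ (for $k<\lag$) converts this into
\[
S^\nn_{k,L}(t)=S^\nn_{k+1,L}\bigl(T^\nn_{k,L}(t)\bigr),\qquad T^\nn_{k,L}(t):=M_{k,L}(\lfloor t a_n(k)\rfloor)/a_n(k+1),
\]
so the theorem reduces to establishing the joint convergence of the time-change processes $T^\nn_{k,L}$, $k=1,\dots,\lag-1$, together with the base clock $S^\nn_{\lag,L}$, to mutually independent stable subordinators of the appropriate indices.

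Next I would analyse the increments $M_{k,L}(i)-M_{k,L}(i-1)$: these count the level-$(k+1)$-or-beyond jumps needed to produce one level-$k$-or-beyond jump while the chain remains inside the current level-$(k-1)$ subtree. To leading order each such increment is a geometric random variable with success probability $\lambda(\mu|_{k-1})$ for the currently occupied ancestor at level $k-1$, hence, by (\ref{distlam}), heavy tailed with index $\alpha_{k,L}$. The scaling relation $a_n(k+1)=a_n(k)^{1/\alpha_{k,L}}$ is precisely the normalization under which a sum of $a_n(k)$ such variables converges to an $\alpha_{k,L}$-stable subordinator; a standard point-process argument then produces the marginal limit $Z_{R((k-1)/L),R(k/L)}(b_{k,L}\,\cdot\,)$ for a constant $b_{k,L}$ determined by (\ref{distlam}). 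The base case $S^\nn_{\lag,L}$ is handled in the same spirit, the inter-jump waiting times at level $\lag$ being heavy tailed with index $\alpha_{\lag,L}$.

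Because the trap depths at different levels are independent in (\ref{trapcoll}), the point-process convergence can be performed jointly in $k$, producing mutually independent subordinator limits. A Borel--Cantelli / concentration argument over the environment yields the full-measure set $\wt\Omega_L$ on which this convergence is quenched for all $k$ simultaneously. Since each limit $Z_{R((k-1)/L),R(k/L)}$ is a strictly increasing subordinator, composition is almost surely continuous at the limit in the product Skorohod $J_1$ topology. Iterating the identity of the first paragraph and applying the continuous mapping theorem gives
\[
S^\nn_{k,L}=S^\nn_{\lag,L}\circ T^\nn_{\lag-1,L}\circ\cdots\circ T^\nn_{k,L}\Longrightarrow Z_{R((\lag-1)/L),R(\lag/L)}\circ\cdots\circ Z_{R((k-1)/L),R(k/L)}(b_{k,L}\,\cdot\,),
\]
and the composition identity (\ref{bohsubii}) of Proposition~\ref{bohsub} collapses the right-hand side to $\wt Z_{k,\lag}(\cdot)$, which is (\ref{eqfixL}).

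The main obstacle is the quenched point-process convergence of the time-change processes. The increments $M_{k,L}(i)-M_{k,L}(i-1)$ are not genuinely i.i.d.~in the true dynamics --- their distribution depends on the currently occupied leaf and hence on the entire past of the chain --- so one must carefully justify an i.i.d.~reduction via the almost-sure disjointness of the level-$(k-1)$ subtrees visited on different excursions (which is plausible because $a_n(k)\ll n^{k-1}$ in the aging range), combined with a quantitative extreme-value / truncation argument controlling the empirical distribution of the $\lambda^{-1}$ values at each level. Once this quenched convergence is in hand, the subordination identity and the continuous mapping argument are routine.
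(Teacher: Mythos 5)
Your overall decomposition is the same one the paper uses --- the identity $S_{k,L}=S_{k+1,L}\circ M_{k,L}$ is exactly the paper's relation (\ref{identity}) for the functionals $T_{k,l}$, and the target limit, compositions of independent stable subordinators collapsed via (\ref{bohsubii}), is the paper's (\ref{yettigari}). But two essential steps are missing or wrongly justified.

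First, the continuous mapping step. You claim that because each limiting $Z$ is a strictly increasing subordinator, the composition map is a.s.\ continuous at the limit in the product $J_1$ topology. This is false: the inner limits are \emph{discontinuous} subordinators, and the composition map $(x,y)\mapsto x\circ y$ on $D\times D$ is discontinuous at any pair where a jump interval $\bigl(y(t-),y(t)\bigr)$ of the inner process contains jumps of the outer one --- which happens almost surely here. (A sequence $y_n\to y$ that realizes a big jump of $y$ in two nearby steps produces two jumps of $x\circ y_n$ where $x\circ y$ has one, breaking $J_1$.) The theorem is nevertheless true because in the prelimit each ``big jump'' of the time change is made in a single step of size given by a geometric count, so the composition aggregates correctly; but exploiting this requires working below the level of c\`adl\`ag paths. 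The paper does precisely that: it encodes the whole hierarchy as a cascade of point measures $\zeta^{(n),l^*}$, proves their quenched convergence to a Ruelle Poisson cascade (Proposition \ref{weaklemma}), proves continuity of the \emph{truncated} clock functionals $T^{(\gamma)}_{k,l^*}$ on that space (Proposition \ref{gozde} and Lemma \ref{contlem}), and then removes the truncation via the estimates on $A^{(n),\gamma}_{k,l^*}$. Your proposal has no substitute for this machinery.

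Second, the base case $S^{(n)}_{l^*,L}$ is not ``handled in the same spirit.'' The holding time attached to a level-$l^*$ vertex is not a single heavy-tailed variable; it is the aggregate $\Lambda(j|_{l^*})$ in (\ref{lamvar}) of the entire sub-cascade over the non-aging levels $l^*+1,\dots,L$. Showing that $a_n(l^*)\,\mathcal P(\Lambda\geq u c_n)\to D_{l^*}u^{-\alpha_{l^*,L}}$ is the content of Lemma \ref{lemanara}(ii), which rests on the recursive Laplace-transform computation of Lemma \ref{propcasc}: the lower levels contribute a random factor $Z_{l^*+1}$ whose $\alpha_{l^*}$-moment is finite precisely because $\alpha_{l^*,L}<\alpha_{l^*+1,L}$, so the tail index survives. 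Nothing in your sketch produces this. (Minor further points: the geometric count governing the passage from level $k$ to level $k+1$ has success probability $\lambda(\mu|_k)$, not $\lambda(\mu|_{k-1})$, though your stated index $\alpha_{k,L}$ is the right one; and the quenched uniformity in the ancestor vertex, which you defer, is what the paper's Bennett/Borel--Cantelli argument in Lemma \ref{lemanara} delivers and is needed at every stage of the induction.)
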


\begin{remark}
Note that by (\ref{Ralphas}) and the definition of Neveu's CSBP, for any $i=1,\dots,L$ and $b>0$, $Z_{R((i-1)/L),R(i/L)}(b\,\cdot)$ is a stable subordinator with index $\alpha_{i,L}$. Therefore, by (\ref{bohsubii}) the distribution of the right hand side of (\ref{eqfixL}) is given by compositions of stable subordinators.
\end{remark}

\begin{remark}
Together with the previous remark, Theorem \ref{fixL} implies that $S_{k,L}^\nn$ converges weakly to an $\bar{\alpha}_k$-stable subordinator, where $\bar{\alpha}_k$ is given as in Theorem \ref{thrm1}. If one is only interested in marginal distributions of the clock processes (which is enough to obtain our results on two-time correlation functions) a shorter proof is available through a technique based on Durrett and Resnick \cite{DR78}, which has been recently proved to be very useful in the context of dynamics in disordered systems,~see \cite{BG13,BGS13,Ver1,Ver2,G12}. However, in this paper we choose to prove the stronger result of the joint convergence of clock processes in order to make the connection to Neveu's CSBP more transparent. 
\end{remark}

In \cite{BL00}, Bertoin and Le Gall gave a representation of the genealogical structure of CSBPs using Bochner's subordination. We say that an individual $t$ at generation  $r$ has an ancestor $c$ at generation $p\in[0,r]$ if $c$ is a jumping time of $Z_{p,r}$ and
\begin{equation}
Z_{p,r}(c-)<t<Z_{p,r}(c).
\end{equation}
Since the L\'evy measure of $Z_{r,p}$ has no atoms, the set of individuals at generation $d$ who do not have an ancestor at generation $r<d$ has a.s. Lebesgue measure 0. In view of this, for individuals $t_1$ and $t_2$ at generation $r$, we let
\begin{equation*}
T_{r}(t_1,t_2)=\sup\{p\geq 0: t_1 \text{ and } t_2 \text{ have a common ancestor at generation } p  \},
\end{equation*}
and set $T_{r}(t_1,t_2)=-\infty$ if $t_1$ and $t_2$ do not have a common ancestor. Using $T_r$ we can express the limiting two-time functions in Theorem \ref{thrm1} and Corollary \ref{corol1} as follows. Note that, by definition, we have
\begin{equation}\label{ptrr}
 P(T_r(t_1,t_2)\geq p)=P(\{Z_{p,r}(t):t\geq 0\}\cap [t_1,t_2]=\emptyset),
\end{equation}
and since $Z_{p,r}$ is a stable subordinator with index $e^{-(r-p)}$, the right hand side of (\ref{ptrr}) is nothing but $Asl_{\alpha}(b/d)$, where $\alpha=e^{-(r-p)}$. Thus, we can rewrite the $Asl$ terms in Theorem \ref{thrm1} and Corollary \ref{corol1} as, respectively,
\begin{equation}
\quad \quad  \;\;\;\;\;\;\;\;Asl_{\bar{\alpha}_k}\Big(\frac{1}{1+\theta}\Big)=P\Big(T_{R(\lag/L)}(1,1+\theta)\geq R((k-1)/L)\Big),
\end{equation}
and
\begin{equation}
 Asl_{\alpha(x)}\Big(\frac{1}{1+\theta}\Big)=P\Big(T_{R(r^*)}(1,1+\theta)\geq R(x)\Big).
\end{equation}

\begin{remark}
Neveu's CSBP was first used in the study of the statics of the GREM and CREM \cite{Nev92}. Namely, the limiting geometric structure of the Gibbs measure of these models can be expressed in terms of the genealogy of Neveu's CSBP (see Section 5 of \cite{BK07}). However, in the context of the dynamics, it appears in a different way, describing the limits of the clock processes.
\end{remark}

The rest of this paper is organized as follows. In Section \ref{section2} we describe clock processes through a certain cascade of point processes associated to the dynamics, prove that they converge weakly to a cascade of Poisson point processes, and finally, using little more than the continuous mapping theorem, we establish Theorem \ref{fixL}. In Section \ref{Section3} we prove Theorem \ref{thrm1} and Corollary \ref{corol1}.

\section{Convergence of the clock processes.}\label{section2}

\subsection{Description of the clock processes through a cascade of point processes.}

We first give definitions and introduce notation for general cascade processes.

\newcommand{\vep}{\varepsilon}

For a complete, separable metric space $A$, we designate by $M(A)$ the space of point measures on $A$, and by $\vep_x$ the Dirac measure at $x\in A$, i.e. $\vep_x(F)=1$ if $x\in F$ and $\vep_x(F)=0$ if $x\notin F$. We set
\begin{equation} 
H=(0,\infty)\times (0,\infty), \quad \text{and}\quad M_{1,l}=M(H^1)\otimes \cdots \otimes M(H^{l}),\;\; l\in\N.
\end{equation}
All the point measures we use are indexed by $\N^k$ and we use the notation $j|_k=j_1j_2\cdots j_k$ for a member of $\N^k$. 

The set of $l$-level cascade point measures, $\mathcal{M}_{1,l}$, is the subset of $M_{1,l}$ where for each $m=(m_1,\dots,m_l)\in\M_{1,l}$ there corresponds a collection of points in $H$ of the form
\begin{equation}\label{collson}
\left\{(t_{j|_k},x_{j|_k}):j|_l\in\N^l,k=1,\dots,l\right\}
\end{equation}
such that for each $k=1,\dots,l$
\begin{equation}\label{mkcoll}
m_k=\sum_{j|_k\in\N^k}\varepsilon_{(t_{j_1},x_{j_1},\dots,t_{j|_k},x_{j|_k})}.
\end{equation}
\iffalse
\begin{equation}
m_k(\cdot)=\sum_{j_1,\dots,j_k\in\N}\mathds{1}\left\{\bigl(t_{j_1},x_{j_1},\dots,t_{j|_k},x_{j|_k}\bigr)\in\;\cdot\right\}.
\end{equation}
\fi
We refer to the collection in (\ref{collson}) as the marks of $m$. Throughout this paper we assume that all the point measures are simple.  Let $\widetilde{\M}_{1,l}$ be the subset of $\M_{1,l}$ such that for each $m\in\widetilde{\M}_{1,l}$, $k=1,\dots,l$, $j|_{k-1}\in\N^{k-1}$ and $t> 0$,
\begin{equation}
m_k\Bigl(\{(t_{j_1},x_{j_1},\dots,t_{j|_{k-1}},x_{j|_{k-1}})\}\times \bigl((0,t]\times(0,\infty)\bigr)\Bigr)<\infty.
\end{equation}
Then, for $m\in\wt{\M}_{1,l}$, there exists a unique labeling of the marks so that for any $j|_{k-1}\in\N^{k-1}$
\begin{equation}\label{ordered}
t_{j|_{k-1}1}<t_{j|_{k-1}2}<\cdots. 
\end{equation}
From now on we only use this labeling for the marks of $m\in\wt{\M}_{1,l}$.

We define $T_l:\wt{\M}_{1,l}\to D([0,\infty))$ by
\begin{equation}
T_l(m)(t)=\sum_{t_{j_1}\leq t}\;\sum_{\;\;t_{j_1j_2}\leq x_{j_1}}\cdots \sum_{\;\;\;\;\;\;t_{j|_{l}}\leq x_{j|_{l-1}}}x_{j|_l}.
\end{equation}
\newcommand{\ovl}{\overline}
Next, we introduce a map $\ovl{T}_l:\wt{\M}_{1,l}\to\wt{\M}_{1,l-1}$. For $m\in\wt{\M}_{1,l}$ let
\begin{equation}
Z(i)=m_2\Bigl(\{(t_{i},x_{i})\}\times \bigl((0,x_{i}]\times(0,\infty)\bigr)\Bigr), \quad i\in\N,
\end{equation}
where we set $Z(0)=0$, 
\begin{equation}
g(i)=\max\{r:Z(0)+\cdots+Z(r)< i\},\;h(i)=Z(0)+\cdots +Z(g(i)), \quad i\in\N,
\end{equation}
and
\begin{equation}
s(i)=x_1+\cdots +x_{i},\quad i\in\N,\quad s(0)=0.
\end{equation}
Then, we define $\overline{T}_l(m)$ as a point in $\M_{1,l}$ whose marks are given by
\begin{equation}\label{tmarks}
\left\{(\overline{t}_{ j|_k},\overline{x}_{j|_k}):j|_{l-1}\in \N^{l-1},k=1,\dots,l-1\right\},
\end{equation}
where, for $j_1\in\N$,
\begin{equation}
\overline{t}_{j_1}=s(g(j_1))+t_{(g(j_1)+1)(j_1-h(j_1))},\;\overline{x}_{j_1}=x_{(g(j_1)+1)(j_1-h(j_1))},
\end{equation}
and for $k>1$ and $j|_k\in\N^{k}$,
\begin{equation}
\overline{t}_{j|_k}=t_{(g(j_1)+1)(j_1-h(j_1))j_2\dots j_k},\;\overline{x}_{j|_k}=x_{(g(j_1)+1)(j_1-h(j_1))j_2\dots j_k}.
\end{equation}
It is clear that $\ovl{T}_l(m)\in \wt{\M}_{1,l}$ and the marks in (\ref{tmarks}) is already ordered in jump times, that is, (\ref{ordered}) is satisfied. \iffalse[\textcolor{red}{Maybe put some picture here, like the one below?}]

\begin{figure}[H]
\begin{center}
\setlength{\unitlength}{5mm}
\begin{picture}(23,8)
\put(0,2.5){\line(1,0){10}}
\put(0,2.35){\line(0,1){0.3}}
\put(12,2.5){\line(1,0){12}}
\put(12,2.35){\line(0,1){0.3}}
\linethickness{0.075mm}
%\multiput(0,2.35)(2,0){5}%
%{\line(0,1){0.3}}
\put(1.82,4.7){$\bullet$}%jump size  2.2
\put(1.82,5.4){$x_1$}
\put(2.0,2.35){\line(0,1){0.3}}
\put(1.82,1.6){$t_1$}
\put(5.82,7.0){$\bullet$} % jump size 4.5
\put(5.82,7.7){$x_2$}
\put(6.0,2.35){\line(0,1){0.3}}
%\put(3.82,1.6){$2$}
%\put(5.82,2.9){$\bullet$}% jump size 0.4
%\put(5.82,3.6){$x_3$}
\put(5.82,1.6){$t_2$}
%\put(7.82,3.0){$\bullet$}% jump size 3.5
%\put(7.82,3.7){$x_3$}
%\put(8.0,2.35){\line(0,1){0.3}}
%\put(7.82,1.6){$t_3$}

%\multiput(12,2.35)(2,0){6}{\line(0,1){0.3}}
\put(14.25,2.27){$\bullet$}
\put(14.2,1.4){$\big\downarrow$}
\put(14.25,0.27){$x_1$}
%\put(18.6,2.27){$\bullet$}

\put(22.2,2.27){$\bullet$}
%\put(22.7,2.27){$\bullet$}
\put(22.14,1.4){$\big\downarrow$}
\put(21.17,0.27){$x_1+x_2$}

\put(13.42,3.2){$\circ$}
\put(13.57,2.35){\line(0,1){0.3}}
\put(13.42,1.6){$t_{11}$}

\put(13.82,3.9){$x_{11}$}

\put(16.52,2.9){$\circ$}
\put(15.42,1.6){$x_1+t_{21}$}
\put(16.52,3.6){$x_{21}$}
\put(16.72,2.35){\line(0,1){0.3}}

\put(19.82,5.5){$\circ$}
\put(18.82,1.6){$x_1+t_{22}$}
\put(20.1,2.35){\line(0,1){0.3}}

\put(19.82,6.2){$x_{22}$}
%\put(19.82,4.5){$\circ$}
%\put(19.82,1.6){$4$}

%\put(19.82,5.2){$x_{41}$}
%\put(21.82,3.5){$\circ$}
%\put(21.82,1.6){$5$}

%\put(21.82,4.2){$x_{42}$}

%\put(4.733,2.3){$\times$}
%\put(10.733,2.3){$\times$}
%\put(12.733,2.3){$\times$}
%\put(22.733,2.3){$\times$}
%\put(2.32,3){$Z$}
%\put(7.6,3){$X_0$}
%\put(11.733,3){$X_1$}
%\put(17.733,3){$X_2$}
%\put(17.85,1.64){$\downarrow$}
%\put(17.9,0.9){$i$}
%\qbezier(4.733,2.5)(10,0.5)(13.1,2.5)
%\put(8.8,0.5){$\sigma(i)$}
%\put(12.8,1.64){$\downarrow$}
%\put(11.5,0.9){$N(i)=2$}
\end{picture}
\end{center}
\caption{The left hand side displays a section of jumps of $m_1$ and the right hand side displays a section of $\ovl m_1$ where  $\ovl m=\ovl T_l(m)$. }\label{eqeq}
\end{figure}
\fi

\newcommand{\xtl}{\tilde{{x}}}

For $k\leq l$, we define $T_{k,l}: \wt{\M}_{1,l}\to D([0,\infty))$ by
\begin{equation}\label{impcomp}
T_{k,l}=T_{l-k+1}(\ovl{T}_{l-k+2}\circ\cdots\circ \ovl{T}_{l-1}\circ \ovl{T}_l),
\end{equation}
where it is understood that $T_{1,l}=T_l$. 

For $l_1\leq l_2$ and $m=(m_1,\dots,m_{l_2})\in\wt{\M}_{1,l_2}$, let $m|_{l_1}=(m_1,\dots,m_{l_1})\in\wt{\M}_{1,l_1}$. 
%For $k\leq l_1$ we can extend the domain of $T_{k,l_1}$ to $\wt{\M}_{1,l_2}$ by setting for $m\in\wt{\M}_{1,l_2}$, $T_{k,l_1}(m)=T_{k,l_1}(m|_{l_1})$. 
We use the following property later: for $k<l_1\leq l_2$ and $m\in\wt{\M}_{1,l_2}$,
\begin{equation}\label{identity}
T_{k,l_2}(m)=T_{l_1,l_2}(m)\circ T_{k,l_1-1}(m|_{l_1-1}),
\end{equation}
where the composition in the above display is on the space $D([0,\infty))$.

We now describe a cascade of point processes associated to the dynamics $X_L$ whose image under the functionals $T_{k,L}$ yields the clock processes. A cascade of simple random walks with $L$ levels on $\tre$ is a collection of random variables
\begin{equation}\label{colldjc}
J=\Big\{J_k(j_k;j|_{k-1}):\;j|_L\in\N^L,\;k=1,\dots,L\Big\} 
\end{equation}
that is characterized as follows:

\noindent{(i)} For each $k=1,\dots,L$ and $j|_{k-1}$ fixed, $\{J_k(j_k;j|_{k-1}):j_k\in\N\}$ 
is a collection of i.i.d. random variables distributed uniformly on $[n]$.

\noindent{(ii)} The families $\big\{J_k(j_k;j|_{k-1}):j_k\in\N\big\}$ for $k=1,\dots,L$ and $j|_{k-1}$ are independent.

For $k=1,\dots,L$, the jump chain $J(j|_k)$ on $V|_k$ is defined by
\begin{equation}
 J(j|_k)=J_1(j_1)J_2(j_2;j|_1)\cdots J_k(j_k;j|_{k-1}).
\end{equation}
In (i) above we make use of the fact that a simple random walk on the complete graph $[n]$ starting from a uniform distribution is the same as a sequence of i.i.d. uniform distributions on $[n]$. For a fixed realization of the random environment, consider the collection of random variables
\begin{equation}\label{colljumps}
\left\{(t_{j|_k},\xi_{j|_k}):j|_L\in\N^L,k=1,\dots,L\right\}
\end{equation}
given as follows: for a given realization of $J$, it is an independent collection with $\xi_{j|_k}\overset{d}=G(\lambda(J(j|_k)))$ for $k=1,\dots,L-1$, and $\xi_{j|_L}\overset{d}=\lambda^{-1}(J(j|_L))e$. Here, $G(p)$ denotes a geometric random variable with success probability $p$ and $e$ is a mean one exponential random variable. We also set $t_{j|_k}=j_k$. Using the collection in (\ref{colljumps}) we define $\zeta^L=(\zeta^L_{1},\dots,\zeta^{L}_{L})$ by
\begin{equation}
\zeta_{k}^L=\sum_{j_1,\dots,j_k\in\N}\vep_{(t_{j_1},\xi_{j_1},\dots,t_{j|_k},\xi_{j|_k})}.
\end{equation}
Clearly $\zeta^L\in\widetilde{\M}_{1,L}$ and the marks of it are already ordered in jump times. Finally, the clock process $S_{k,L}$ is given by
\begin{equation}
S_{k,L}=T_{k,L}(\zeta^L),\quad k=1,\dots,L.
\end{equation}

\iffalse
We can describe the rescaled clock processes as functions of rescaled cascade process of the dynamics. More precisely, defining $\zeta^{\nn,L}=(\zeta_{1,L}^{\nn,L},\dots,\zeta_{L,L}^{\nn,L})$ by
\begin{equation}
\zeta^{\nn,L}_{k,L}=\sum_{j_1,\dots,j_k}\vep_{(t_{j_1}/\aan{1},\eta_{j_1}/\ccn{1},\dots,t_{j|_k}/\aan{k},\eta_{j|_k}/\ccn{k})},
\end{equation}
we get
\begin{equation}
S_{k,L}^\nn=T_{k,L}(\zeta^{\nn,L}).  
\end{equation}
\fi

\subsection{Convergence of the cascade of point processes of the dynamics}

\newcommand{\X}{\mathit{X}}
\newcommand{\x}[1]{x^\nn_{j_1\dots j_{#1}}}
\newcommand{\xsq}[2]{\x{#1}:#1=1,\dots,#2;\;j_1,\dots j_{#1}\in\N}
\newcommand{\F}{{\mathcal{F}}}
\newcommand{\bS}{{S}}

We have seen in the previous section how the collection in (\ref{colljumps}) is used to describe all the clock processes. Since we are interested in proving the joint convergence of clock processes $S_{1,L},\dots,S_{\lag,L}$ rescaled as in (\ref{scalclock}), where $l^*=l^*(\rho)$, we consider the following collection 
\begin{equation}
\Big\{(t_{j|_k}^\nn,\xi_{j|_k}^\nn):\; j|_\lag\in\N^{\lag},k=1,\dots,\lag\Big\}
\end{equation}
obtained from (\ref{colljumps}) by setting $ t_{j|_k}^\nn={j_k}/{\aan{k}}$, 
\begin{equation}
\xi_{j|_k}^\nn=\frac{\xi_{j|_k}}{\ccn{k}}, \quad k=1,\dots,\lag-1,\;\text{ and } \;\xi_{j|_{\lag}}^\nn =\frac{\Lambda(j|_{\lag})}{\tn},
\end{equation}
where
\begin{equation}\label{lamvar}
 \Lambda(j|_{\lag})=\sum_{j_{\lag+1}=1}^{\xi_{j|_{\lag}}} \cdots \sum_{j_{L}=1}^{\xi_{j|_{L-1}}}\xi_{j|_L}.
\end{equation}
Here, the terms corresponding to the aging level $\lag$ collects the whole waiting times until the dynamics jump over the vertex $J(j|_{\lag})$. Defining $\zeta^{\nn,\lag}=(\zeta^{\nn,\lag}_{1},\dots,\zeta^{\nn,\lag}_{\lag})$ by
\begin{equation}\label{rcoll}
 \zeta_{k}^{\nn,\lag}=\sum_{j|_k\in\N^k}\vep_{(t_{j_1}^\nn,\xi_{j_1}^\nn,\dots,t_{j|_k}^\nn,\xi_{j|_k}^\nn)},
\end{equation}
we get 
\begin{equation}
S_{k,L}^\nn=T_{k,\lag}(\zeta^{\nn,\lag}),\quad k=1,\dots,\lag.
\end{equation}
Now we describe a cascade of Poisson point processes (PPP). For $l\in\N$, for constants $0<\beta_1<\cdots<\beta_l<1$ and $D_1,\dots, D_l>0$, let 
\begin{equation}
 \Big\{(t_{j|_k},\eta_{j|_k}):\; j|_k\in \N^{l},\;k=1,\dots,l\Big\}
\end{equation}
be a collection of random variables whose distribution is characterized by the following properties:

\noindent{(i)} For each $k=1,\dots,l$ and $j|_{k-1}$ fixed, the distribution of $\{(t_{j|_k},\eta_{j|_{k-1}j_k}):j_k\in\N\}$ 
is that of marks of a PPP on $H$ with mean measure $dt\times D_k\beta_k\; x^{-1-\beta_k}dx$.

\noindent{(ii)} The families $\big\{(t_{j|_{k-1}j_k},\eta_{j|_{k-1}j_k}):j_k\in\N\big\}$ for $k=1,\dots,l$ and $j|_{k-1}$ are independent.

We define $\chi^{l}=(\chi_{1}^{l},\dots,\chi_{l}^{l})$ by
\begin{equation}\label{poscas}
 \chi_{k}^{l}=\sum_{j|_k\in\N^k} \vep_{(t_{j_1},\eta_{j_1},\dots,t_{j|_k},\eta_{j|_k})},
\end{equation}
\iffalse
\begin{equation}\label{poscas}
 \zeta_{k,l}(\cdot)=\sum_{j_1,\dots,j_k\in\N}\mathds{1}\Big\{(t_{j_1},\xi_{j_1},\dots,t_{j_1\dots j_k},\xi_{j_1\dots j_k})\in\cdot\Big\},
\end{equation}
\fi
and refer to $\chi^l$ as Ruelle's Poisson Cascade (RPC) with parameters $\beta_1,\dots,\beta_l$ and constants $D_1,\dots,D_l$.

Our first goal is to prove that cascade of point measures $\zeta^{\nn,\lag}$  converges weakly to an RPC. Next, we prove that the versions of clock process functionals where the very small jumps are ignored are continuous. Finally, using the continuous mapping theorem and controlling the very small jumps we establish the convergence of clock processes. In this regard, we extend the classical results on convergence of sum of random variables with heavy tails (see e.g. Theorem 3.7.2 in \cite{DurrettBook} and Proposition 3.4. in \cite{ResNotes}).

Next, we state our weak convergence result. Recall that $\zeta^{\nn,\lag}$ and $\chi^{\lag}$ are random elements of product of space of point measures $M_{1,\lag}=M(H)\otimes \cdots M(H^\lag)$, and we use the weak convergence induced by the product vague topology, denoted by $\Rightarrow$.

\newcommand{\xinl}{\zeta^{\nn,\lag}}
\newcommand{\xil}{\chi^{\lag}}

\begin{proposition}\label{weaklemma}
There exist positive constants $D_1,\dots,D_{\lag}$ and a subset $\Omega'_L\subseteq \Omega$ with $\mathbb{P}(\Omega'_L)=1$ such that for any environment in $\Omega'_L$, as $n\to\infty$
\begin{equation}
\zeta^{\nn,\lag} \Rightarrow \chi^{\lag}, %Same thing here with \big vs. \left
\end{equation}
where $\chi^{\lag}$ is a RPC with parameters $\alpha_{1,L},\dots,\alpha_{\lag,L}$ and constants $D_1,\dots,D_\lag$.
\end{proposition}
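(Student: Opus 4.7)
The plan is to establish the convergence quenched in the environment via Laplace functionals, reducing the cascade to a product structure indexed by the tree. I would split the argument into three blocks: (a) a quenched tail calculation at each non-bottom level $k<\lag$, (b) the analogous tail calculation at the bottom level $\lag$, which requires first collapsing the random sum $\Lambda(j|_\lag)$ to its deterministic average over the non-aging subtree below level $\lag$, and (c) assembling the joint limit from the conditional independence built into the cascade.

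At levels $k<\lag$ the marks are $\xi_{j|_{k-1}j_k}=G(\lambda(J(j|_{k-1})i))$ for $i$ uniform in $[n]$, hence conditionally on the environment they are i.i.d.\ from the mixture $\tfrac{1}{n}\sum_{i=1}^n G(\lambda(J(j|_{k-1})i))$. Using $\mathbb{E}[e^{-x c_n(k)\lambda}]\sim \Gamma(\alpha_{k,L}+1)(xc_n(k))^{-\alpha_{k,L}}$ for $\lambda$ with density $\alpha_{k,L}\lambda^{\alpha_{k,L}-1}$ on $(0,1]$, together with a quenched law of large numbers for the empirical distribution of $\{\lambda(J(j|_{k-1})i)\}_{i=1}^n$, one obtains, almost surely in the environment,
\[
a_n(k)\,\mathbb{P}\bigl(\xi_{j|_{k-1}j_k}/c_n(k)>x\,\big|\,\text{env}\bigr)\;\longrightarrow\; D_k\,x^{-\alpha_{k,L}},
\]
with $D_k>0$. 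The scaling relation $a_n(k)=c_n(k)^{\alpha_{k,L}}$ at aging levels, read off from (\ref{scalrel}), produces the correct exponent. At the bottom level the mark is $\Lambda(j|_\lag)/c_n$, a nested random sum built from geometrics at all levels $\lag,\dots,L-1$ and exponentials at the leaves. Writing $\Lambda(j|_\lag)=\sum_{r=1}^{\xi_{j|_\lag}}E_r$ with $E_r$ i.i.d.\ given the environment and $J(j|_\lag)$, the heart of the proof is to show that each $E_r$ concentrates around a deterministic value $\mathcal{D}_n(J(j|_\lag))\sim D'_\lag\,n^{d_{\lag+1,L}-1}$ almost surely. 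This reduction exploits the non-aging scaling $a_n(k)\gg n$ for $k>\lag$, which provides enough averaging to tame the heavy tails of $\lambda^{-1}$ at those levels; it has to be proven by iterated quenched LLNs, conditioning successively on levels $\lag+1,\dots,L-1$ and using truncation plus moment bounds to control the centered sums. Once this is in place, $\Lambda(j|_\lag)/c_n$ inherits the tail of $D'_\lag\,\lambda^{-1}(J(j|_\lag))\,n^{d_{\lag+1,L}-1}/c_n$, and the argument of block (a) applied to level $\lag$ gives
\[
a_n(\lag)\,\mathbb{P}\bigl(\Lambda(j|_\lag)/c_n>x\,\big|\,\text{env}\bigr)\;\longrightarrow\; D_\lag\,x^{-\alpha_{\lag,L}}
\]
almost surely in the environment.

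With these quenched marginal tails I would compute the Laplace functional of $\zeta^{\nn,\lag}$ against continuous compactly supported $f_k:H^k\to[0,\infty)$. Conditionally on the environment and on the vertex collection $\{J(j|_{k-1})\}$, the pairs $(t^\nn_{j|_{k-1}j_k},\xi^\nn_{j|_{k-1}j_k})$ are independent across $j_k$ and across different families $j|_{k-1}$, so the Laplace functional factorizes into a product indexed by the tree. Applying the classical $1-e^{-f}$ expansion to each factor and inserting the tails of the previous paragraph produces exactly the Laplace functional of the RPC with mean measures $dt\times D_k\alpha_{k,L}x^{-1-\alpha_{k,L}}dx$ at each level and independence across parents. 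The almost-sure set $\Omega'_L$ is the intersection of the countably many null sets on which the quenched LLNs used in (a) and (b) fail, taken over a countable dense class of test functions.

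The principal obstacle is the concentration argument at the non-aging levels in block (b): one must prove that the random subtree-excursion times collapse to a single deterministic scale in a quenched sense, despite being iterated sums of variables with power-law tails of indices $\alpha_{\lag+1,L},\dots,\alpha_{L,L}$. This relies crucially on the inequality $\rho>d_{k,L}$ for $k>\lag$, which forces $a_n(k)\gg n$ and hence allows the empirical averages to dominate the stable fluctuations. The remaining steps (a) and (c) are standard manipulations in the heavy-tail/Poisson convergence framework.
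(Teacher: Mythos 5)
Your blocks (a) and (c) follow the paper's route (quenched one-level tail estimates assembled through an inductive Laplace-functional computation, exactly as in Lemma \ref{lemanara}(i) and the proof of Proposition \ref{weaklemma}), but block (b) contains a genuine error. You claim that a single excursion below level $\lag$ concentrates, quenched, around a \emph{deterministic} value of order $n^{d_{\lag+1,L}-1}$, to be proven by ``iterated quenched LLNs.'' No such law of large numbers exists here: the conditional mean of one excursion given the environment is built from averages of the form $\frac{1}{n}\sum_{\mu_{k}}\lambda^{-1}(\mu|_{k-1}\mu_k)$ at levels $k=\lag+1,\dots,L$, and since each $\lambda^{-1}$ at level $k$ has tail index $\alpha_{k,L}<1$ (infinite mean), these sums are dominated by their maxima: normalized by $n^{1/\alpha_{k,L}-1}$ they converge in distribution to nondegenerate one-sided stable laws, not to constants. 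The condition $a_n(k)\gg n$ for $k>\lag$ only guarantees that, \emph{given} the environment, the empirical excursion time is close to its environment-dependent conditional mean; it does nothing to make that conditional mean deterministic. This is precisely why the paper's Lemma \ref{propcasc} produces a nondegenerate random limit $Z_{\lag+1}$ (a composition of stable laws with Laplace transform $e^{-d_{\lag+1}\kappa^{\alpha_{\lag+1}}}$) for the rescaled excursion sums, and why the resulting constant is $D_\lag=\Gamma(1+\alpha_\lag)\int_0^\infty P(Z_{\lag+1}\geq s)\,\alpha_\lag s^{\alpha_\lag-1}ds$, i.e.\ it carries the $\alpha_\lag$-moment of $Z_{\lag+1}$ rather than a power of a deterministic scale. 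Your truncation-plus-moment-bound strategy would, at best, reprove this stable convergence, not the concentration you assert.

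The gap matters for the structure of the whole argument, not only for the value of $D_\lag$. Because no quenched deterministic collapse holds below level $\lag$, one cannot reduce $\Lambda(j|_\lag)/c_n$ to ``$D'_\lag\,\lambda^{-1}(J(j|_\lag))\,n^{d_{\lag+1,L}-1}/c_n$'' and then recycle block (a). The correct route is the paper's: compute the \emph{annealed} law of the excursion sums recursively from level $L$ up to level $\lag+1$ (Lemma \ref{propcasc}), identify the limit as $rZ_{\lag+1}$, and only then recover a quenched statement at level $\lag$ by averaging $\Pp^\nn_{\mu|_\lag}((u,\infty))$ over the $n$ children $\mu_\lag$ — these are bounded i.i.d.\ random variables of the environment, so Bennett's inequality and Borel--Cantelli apply there, where $a_n(\lag)\ll n$. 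In short: self-averaging is available at level $\lag$ and above, but not below it, and your proposal places it on the wrong side of that divide.
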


We first recall some basic facts about Laplace functionals of point processes since we use them to prove Proposition \ref{weaklemma}. For the basic concepts about point processes we mainly follow the book \cite{Re87} and refer readers to the same source for further details. 

Let $m\in M(A)$ and $f$ be a non-negative Borel measurable function on $A$. Define
\begin{equation}
 m(f)=\int_A f(x)m(dx).
\end{equation}
Let $C_K^+(A)$ be the set of non-negative continuous functions on $A$ with compact support. Then, $m^\nn\in M(A)$ converges vaguely to $m\in M(A)$ if
\begin{equation}
 m^\nn(f)\to m(f),\quad \forall f\in C_K^+(A).
\end{equation}
The vague topology on $M(A)$ induced by the vague convergence is metrizable as a complete separable metric space. The weak convergence in $M(A)$ is with respect to the vague topology on $M(A)$.

Let $N$ be a point process on $A$ and let $P$ and $E$ denote its distribution and expectation, respectively. The Laplace functional of $N$ is a map which takes Borel measurable non-negative functions into $[0,\infty)$, defined by 
\begin{equation}
 \Phi_N(f)=E\left[\exp(-N(f))\right]=\int_{M(A)}\exp\left(-m(f)\right)P(dm).
\end{equation}
The Laplace functionals give a useful criteria for the weak convergence of point processes: $N^\nn\Rightarrow N$ if and only if $\Phi_{N^\nn}(f)\to\Phi_N(f)$ for all $f\in C_K^+(A)$. 

In Proposition \ref{weaklemma} we are concerned with sequences of vectors of point processes so now we describe the Laplace functionals for such vectors. Let $l\in\N$ and $N=(N_1,\dots,N_{l})$ be a random variable on ${M}_{1,l}$ and again $P$ and $E$ denote its distribution and expectation, respectively. The Laplace functional of $N$ is a map which takes a vector of $l$ non-negative Borel measurable functions, $f_1,\dots,f_{l}$ on $H,\dots,H^{l}$, respectively, into $[0,\infty)$, defined by
\begin{equation}
\begin{aligned}
 \Phi_{N}(f_1,\dots,f_{l})&=E\left[\exp\left(-f_1(N_1)-\cdots -f_{l}(N_{l})\right)\right]\\&=\int_{{M}_{1,l}}\exp\left(-m_1(f_1)-\cdots-m_{l}(f_{l})\right)P(dm_1\dots dm_{l}).
\end{aligned}
\end{equation}
As already stated in Proposition \ref{weaklemma}, the weak convergence we use is the one induced by the product vague topology on ${M}_{1,l}$.
{It is trivial to extend the characterization of the weak convergence of point processes by the Laplace functionals to the product space to get}
\begin{equation}
 N^\nn=(N_1^\nn,\dots,N^\nn_l)\Rightarrow N=(N_1,\dots,N_l),
\end{equation}
if and only if for all $f_1\in C_K^+(H)$,$\dots,$ $f_l\in C_K^+(H^l)$,
\begin{equation}
\Phi_{N^\nn}(f_1,\dots,f_l)\to \Phi_{N}(f_1,\dots,f_l).
\end{equation}
Thus, our goal is to show that $\Phi_{\xinl}\to\Phi_{\xil}$.

For $k=1,\dots,\lag-1$ and $\mu|_k\in V|_k$, let $\Pp^\nn_{\mu|_k}$ be the probability distribution
\begin{equation}\label{gdist}
\begin{aligned}
\Pp^\nn_{\mu|_k}\bigl((u,\infty)\bigr)&=\Pp\Big(G(\lambda(J(j|_k)))\geq u \ccn{k} \big|J(j|_k)=\mu|_k\Big),\;u>0,\\&=\Pp\Bigl(G(\lambda(\mu|_k))\geq u\ccn{k}\Bigr),
\end{aligned}
\end{equation}
and for $\mu|_{\lag}\in V|_{\lag}$, let $\Pp^\nn_{\mu|_{\lag}}$  be the probability distribution
\begin{equation}\label{pmuldef}
\Pp^\nn_{\mu|_{\lag}}\bigl((u,\infty)\bigr)=\Pp\Big(\Lambda(j|_{\lag})\geq u \tn \big|J(j|_{\lag})=\mu|_{\lag}\Big),\;u>0,
\end{equation}
where $\Lambda$ is given by (\ref{lamvar}).
We set
\begin{equation}
\bar{\nu}_k^\nn(u;\mu|_{k-1})=\frac{\aan{k}}{\mmn{k}}\sum_{\mu_k=1}^{\mmn{k}} \Pp_{\mu|_k}^\nn\bigl((u,\infty)\bigr),\; u>0.
\end{equation}
For simplicity, in the rest of this section we write $\alpha_1,\dots,\alpha_L$ for $\alpha_{1,L},\dots,\alpha_{L,L}$. The following lemma is the basic step in proving Proposition \ref{weaklemma}.
\begin{lemma}\label{lemanara}
For each $k=1,\dots,\lag$, there exist a subset $\Omega_{k,L}\subseteq \Omega$ with $\mathbb{P}(\Omega_{k,L})=1$ such that the following holds:
\;\item[(i)] for $k=1,\dots,\lag-1$, for any environment in $\Omega_{k,L}$, for any $u>0$, uniformly in $\mu|_{k-1}$, as $n\to\infty$
\begin{equation}\label{eqnlemma1}
\bar{\nu}_k^\nn(u;\mu|_{k-1})\rightarrow D_k u^{-\alpha_k},
\end{equation}
where $D_k=\Gamma(1+\alpha_k)$,
\;\item[(ii)] for $k=\lag$, for any environment in $\Omega_{\lag,L}$, for any $u>0$, uniformly in $\mu|_{\lag-1}$, as $n\to\infty$
\begin{equation}\label{eqnlemma2}
\bar{\nu}_{\lag}^\nn(u;\mu|_{\lag-1})\rightarrow D_{\lag}u^{-\alpha_{\lag}},
\end{equation}
where $D_{\lag}$ is positive a deterministic constant.
\iffalse
Moreover, let $Z$ be a positive random variables with the Laplace transform $E[e^{-\rho Z}]=e^{-\rho^{\alpha_{\lag+1}}}$ and set $\alpha_{L+1}=1$ then 
\begin{equation}\label{constants1}
D_{\lag}=E\big[Z^{\alpha_{\lag}}\big]\prod_{i=\lag+1}^L \Gamma\left(1-\frac{\alpha_i}{\alpha_{i+1}}\right)^{\frac{\alpha_{\lag}}{\alpha_i}}.
\end{equation}
\fi

\end{lemma}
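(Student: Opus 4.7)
The strategy for both parts is to first identify the limit of the annealed mean $\mathbb{E}[\bar{\nu}_k^\nn(u;\mu|_{k-1})]$, and then upgrade this to almost sure uniform convergence by combining a concentration inequality with Borel--Cantelli, using a union bound over the $n^{k-1}$ choices of $\mu|_{k-1}\in V|_{k-1}$. The key structural observation is that, for each fixed $\mu|_{k-1}$, the summands $\{P_{\mu|_k}^\nn((u,\infty))\}_{\mu_k=1}^n$ are i.i.d., nonnegative and bounded by $1$: in part~(i) each depends only on the independent Pareto variable $\lambda(\mu|_k)$, and in part~(ii) each depends on the disjoint subtree environments hanging below the vertices $\mu|_{l^*-1}\mu_{l^*}$ as $\mu_{l^*}$ varies.

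For part~(i), using $\mathcal{P}(G(p)\geq m)=(1-p)^{\lceil m\rceil}$ and the Pareto density $\alpha_{k,L}v^{\alpha_{k,L}-1}$ of $\lambda$, the annealed mean equals
\begin{equation*}
a_n(k)\int_0^1(1-v)^{\lceil uc_n(k)\rceil}\alpha_{k,L}\,v^{\alpha_{k,L}-1}\,dv.
\end{equation*}
The substitution $v=w/c_n(k)$, together with $c_n(k)=a_n(k)^{1/\alpha_{k,L}}$ and the pointwise limit $(1-w/c_n(k))^{\lceil uc_n(k)\rceil}\to e^{-uw}$, yields by dominated convergence the limit $\Gamma(1+\alpha_{k,L})u^{-\alpha_{k,L}}=D_k u^{-\alpha_{k,L}}$. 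For the almost sure uniform convergence, each summand lies in $[0,1]$ with variance $O(c_n(k)^{-\alpha_{k,L}})=O(a_n(k)^{-1})$, so Bernstein's inequality provides a stretched-exponential deviation bound of the form $2\exp(-c\,t^2\,n/a_n(k))$; because $n/a_n(k)$ grows polynomially under the hypotheses of Theorem~\ref{thrm1}, this absorbs the factor $n^{k-1}$ from the union bound over $V|_{k-1}$, and Borel--Cantelli along a countable dense set of $u$'s (extended by monotonicity in $u$) gives~(\ref{eqnlemma1}).

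For part~(ii), the argument proceeds along the same lines but requires controlling the cascaded waiting time $\Lambda(j|_{l^*})$ from~(\ref{lamvar}). The aim is to show
\begin{equation*}
a_n(l^*)\,\mathbb{E}[\mathcal{P}(\Lambda(j|_{l^*})\geq uc_n)]\longrightarrow D_{l^*}\,u^{-\alpha_{l^*,L}}
\end{equation*}
for a deterministic $D_{l^*}>0$. The dominant contribution comes from the outermost geometric count $\xi_{j|_{l^*}}\sim G(\lambda(\mu|_{l^*}))$, whose typical value $\lambda^{-1}(\mu|_{l^*})$ carries the $\alpha_{l^*,L}$-Pareto tail. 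Conditionally on $\xi_{j|_{l^*}}=N$, the inner sum $\sum_{j_{l^*+1}=1}^{N}\Lambda^{(l^*+1)}$ is a sum of $N$ i.i.d.\ copies of the subcascade $\Lambda^{(l^*+1)}$; an induction from the leaves upward shows that these subcascades, after normalization by the appropriate $c_n(l^*+1)$, have well-controlled laws with finite expectation-like constants emerging level by level. Multiplying these inner scales through the non-aging levels $l^*+1,\dots,L$ produces a deterministic factor which, combined with the $\alpha_{l^*,L}$-Pareto tail of $\xi_{j|_{l^*}}$, gives the asserted tail with $D_{l^*}$ expressible as a product of Gamma-type constants. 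The concentration-plus-union-bound argument of part~(i) then applies essentially verbatim.

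The main obstacle is the inductive control of the cascade $\Lambda(j|_{l^*})$: one must verify that the non-aging intermediate levels contribute only through their typical bulk behavior rather than through rare large fluctuations, so that the tail of $\Lambda(j|_{l^*})$ is determined by the single outer Pareto factor. This requires careful tail and moment estimates at every level, justified by the scale relations $c_n(k)=a_n(k+1)$ and by the inequality $\alpha_{l^*,L}<\alpha_{k,L}$ for $k>l^*$, which ensures that the outer level has the heaviest tail. Once this cascade tail is pinned down, the remainder reduces to the template of part~(i).
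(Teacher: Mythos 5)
Your part (i) follows the paper's argument essentially verbatim: the same computation of the annealed mean via a change of variables reducing the integral to a Gamma function, and the same concentration--union bound--Borel--Cantelli upgrade (the paper uses Bennett's inequality where you invoke Bernstein's; this makes no difference since the summands are bounded and the total variance is $O(n/a_n(k))$, which decays polynomially and absorbs the $n^{k-1}$ union bound). The extension from a countable dense set of $u$'s to all $u$ by monotonicity is also exactly as in the paper.

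Part (ii) is where your proposal stops short of a proof, and where your guiding picture is not quite right. The paper's treatment rests on a separate inductive lemma (Lemma \ref{propcasc}): for each non-aging level $m=L,L-1,\dots,l^*+1$ one shows, by an explicit Laplace-transform computation descending the tree, that the subcascade clock $\Theta_m$ normalized by $c_n$ converges in law (averaged over the environment) to a random variable $Z_m$ with Laplace transform $e^{-d_m\kappa^{\alpha_{m,L}}}$ --- a genuinely random, heavy-tailed limit with \emph{infinite} mean. Your sketch instead asserts that the non-aging levels contribute ``only through their typical bulk behavior,'' producing ``a deterministic factor'' with ``finite expectation-like constants emerging level by level''; this is not what happens, and an argument built on that picture would fail, since none of the intermediate normalized subcascades concentrate or have finite expectation. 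The deterministic constant $D_{l^*}$ emerges only after integrating the tail of the random limit $Z_{l^*+1}$ against the Pareto tail of the outermost geometric variable: one gets $D_{l^*}=\Gamma(1+\alpha_{l^*,L})\,E\bigl[Z_{l^*+1}^{\alpha_{l^*,L}}\bigr]$, whose finiteness is exactly the point where $\alpha_{l^*,L}<\alpha_{l^*+1,L}$ enters, and whose justification requires a Tauberian small-$r$ bound of the form $g_u(r)\leq C r^{\alpha_{l^*+1,L}}u^{-\alpha_{l^*+1,L}}$ to pass to the limit in the Riemann sum by dominated convergence. You correctly identify the inequality $\alpha_{l^*,L}<\alpha_{k,L}$ for $k>l^*$ as the crucial input, but the inductive identification of the limit laws $Z_m$ and the accompanying uniform-integrability estimate constitute the substance of part (ii) and are missing from your proposal.
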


We first finish the proof of Proposition \ref{weaklemma} using Lemma \ref{lemanara}. 

\begin{proof}[Proof of Proposition \ref{weaklemma}]
Choose the constants $D_1,\dots,D_\lag$ as in Lemma \ref{lemanara} and write the measures 
\begin{equation}
\nu_k(dx_k)=D_k\alpha_{k} x_k^{-1-\alpha_{k}},\;x_k\geq 0.
\end{equation}
We calculate the Laplace functional of $\xil$. Let $\phi_k:C_K^+(H^k)\to C_K^+(H^{k-1})$ be
\begin{equation*}
\phi_kf_k(t_1,x_1,\dots,t_{k-1},x_{k-1})=\int\int (1-e^{-f_k(t_1,x_1,\dots,t_{k-1},x_{k-1},t_k,x_k)})dt_k\nu_{k}(dx_k).
\end{equation*}
Note that $\chi_{1}^{\lag}$ is a PPP on $H$ with mean measure $dt\times D_1 \alpha_1 x_1^{-1-\alpha_1}dx_1$. Hence,
\begin{equation}\label{lapposcas1}
\Phi_{\chi_{1}^{\lag}}(f_1)=\exp\left(-\phi_1(f_1)\right).
\end{equation}
For $k=2,\dots,\lag$, using the correlation structure of $\xil$ we get
\begin{equation*}
\begin{aligned}
&E\left[\exp\bigl(-\sum_{j_1\in\N} f_1(t_{j_1},\eta_{j_1})-\cdots -\sum_{j|_k\in\N^k} f_k(t_{j_1},\eta_{j_1},\dots,t_{j|_k},\eta_{j|_k})\bigr)\Big|\chi_{1}^{\lag},\dots,\chi_{k-1}^{\lag}\right]\\&
=\exp\bigl(-\sum_{j_1\in\N} f_1(t_{j_1},\eta_{j_1})-\cdots -\sum_{\;\;j|_{k-1}\in \N^{k-1}} f_{k-1}(t_{j_1},\eta_{j_1},\dots,t_{j|_{k-1}},\eta_{j|_{k-1}})\bigr)\times
\\&\prod_{\;\;j|_{k-1}\in \N^{k-1}}E\left[\exp\bigl(-\sum_{j_k\in\N}f_k(t_{j_1},\eta_{j_1},\dots,t_{j|_{k-1}},\eta_{j|_{k-1}},t_{j|_k},\eta_{j|_k})\bigr)\Big|\chi_{k-1}^{\lag}\right]
\\&=\exp\Bigl(-\sum_{j_1\in\N} f_1(t_{j_1},\eta_{j_1})-\cdots -\sum_{\;\;j|_{k-1}\in \N^{k-1}} (f_{k-1}+\phi_kf_k)(t_{j_1},\eta_{j_1},\dots,t_{j|_{k-1}},\eta_{j|_{k-1}})\Bigr).
\end{aligned}
\end{equation*}
In the last step above we used part (i) of the description of RPC and the Laplace transform of a PPP. Taking the expectations of first and last terms in the above display gives
\begin{equation}\label{lapposcas2}
\Phi_{(\chi_{1}^{\lag},\dots,\chi_{k}^{\lag})}(f_1,\dots,f_k)=\Phi_{(\chi_{1}^{\lag},\dots,\chi_{k-1}^{\lag})}(f_1,\dots,f_{k-2},f_{k-1}+\phi_kf_k).
\end{equation}
Thus, the Laplace functional $\Phi_{\xil}$ is given by (\ref{lapposcas2}) recursively from $k=\lag$ to $k=2$ and by (\ref{lapposcas1}).

We proceed the proof by induction. Recall that $\xi_{j_1}^\nn\overset{d}=G(\lambda(J(j_1)))/c_n(1)$. Hence, since $\{J(j_1):j_1\in\N\}$ is i.i.d. so is  $\{\xi_{j_1}^\nn:j_1\in\N\}$, and
\begin{equation}
\aan{1}\Pp\bigl(\xi_{j_1}^\nn\geq u \bigr)=\frac{\aan{1}}{n}\sum_{\mu_1=1}^n \Pp_{\mu_1}^\nn\bigl((u,\infty)\bigr)=:\bar{\nu}_1^\nn(u),\; u>0.
\end{equation}
Therefore, by Lemma \ref{lemanara} and Proposition 3.21 in \cite{Re87}, for any environment in $\Omega_{1,L}$,
\begin{equation}\label{lapdyn1}
\zeta_{1}^{\nn,\lag}\Rightarrow \chi_{1}^{\lag}.
\end{equation}
Let $k\in\{2,\dots,\lag\}$ and assume that $\exists \Omega'_{k-1,L}\subseteq \Omega$ with $\mathbb{P}(\Omega'_{k-1,L})=1$ such that for any environment in $\Omega'_{k-1,L}$,
\begin{equation}\label{indcstep}
\big(\zeta_{1}^{\nn,\lag},\dots,\zeta_{k-1}^{\nn,\lag}\big)\Rightarrow \big(\chi_{1}^{\lag},\dots,\chi_{k-1}^{\lag}\big).
\end{equation}
\iffalse
for any $f_1\in C_K^+(H),\dots,f_k\in C_K^+(H^k)$
\begin{equation}
\Ee\left[\exp\Big(-\zeta_{1,\lag}^{\nn,L}(f_1)-\cdots - \zeta_{k,\lag}^{\nn,L}(f_{k})\Big)\right]\rightarrow \Phi_{(\zeta_{1,\lag},\dots,\zeta_{k,\lag})}(f_1,\dots,f_k).
\end{equation}
\fi
Let $\mathcal{G}_{k-1}^\nn$ and $\mathcal{F}^\nn_{k-1}$ be the $\sigma$-algebras
\begin{equation}
 \mathcal{G}_{k-1}^\nn=\sigma\left(\zeta_{1}^{\nn,\lag},\dots,\zeta_{k-1}^{\nn,\lag}\right),\quad \mathcal{F}^\nn_{k-1}=\sigma\left(J(j|_i):\;j|_{k-1}\in \N^{k-1},i=1,\dots,k-1\right).
\end{equation}
Observe that then,
\begin{align}
&\;\;\mathcal{E}\left[\exp\Big(-\zeta_{1}^{\nn,\lag}(f_1)-\cdots - \zeta_{k}^{\nn,\lag}(f_{k})\Big)\Big| \mathcal{G}_{k-1}^\nn\right]\\
&=\mathcal{E}\left[\mathcal{E}\Big[\exp\Big(-\zeta_{1}^{\nn,\lag}(f_1)-\cdots - \zeta_{k}^{\nn,\lag}(f_{k})\Big)\Big|\mathcal{G}_{k-1}^\nn, \F_{k-1}^\nn\Big]\Big|\mathcal{G}_{k-1}^\nn\right]\\
&\label{kocson}=\exp\left(-\zeta_{1}^{\nn,\lag}(f_1)-\cdots - \zeta_{k-1}^{\nn,\lag}(f_{k-1})-g^\nn(\zeta_{1}^{\nn,\lag},\dots,\zeta_{k-1}^{\nn,\lag}) \right),
\end{align}
where for $m=(m_1,\dots,m_{k-1})\in \M_{1,k-1}$ with marks $\{(t_{j|_i},x_{j|_i}):j|_{k-1}\in \N^{k-1},i=1,\dots,k-1\}$, $g^\nn(m)$ is given by
\begin{equation}
-\sum_{j|_k\in\N^k}\log \mathcal{E}\left[1-\int\bigl(1-e^{-f_{k}(t_{j_1},x_{j_1},\dots,t_{j|_{k-1}},x_{j|_{k-1}},j_{k}/\aan{k},x_k)}\bigr)\frac{\nu_k^\nn(dx_k;J(j|_{k-1}))}{\aan{k}}\right].
\end{equation}
By Lemma \ref{lemanara}, for any environment in $\Omega_{k,L}$ where $\Omega_{k,L}$ is as given in Lemma \ref{lemanara}, for any $m\in\M_{1,k-1}$, 
\begin{equation}
g^\nn(m)\to \sum_{\;\;j|_{k-1}\in \N^{k-1}}\phi_k f_k(t_{j_1},x_{j_1},\dots,t_{j|_{k-1}},x_{j|_{k-1}})=m_{k-1}(\phi_kf_k).
\end{equation}
By Proposition 3.16 in \cite{Re87}, we have for any $A\subseteq \M_{1,k-1}$ relatively compact and $F\subseteq H^{k-1}$ also relatively compact,
\begin{equation}
\sup_{m\in A}\sum_{\;\;j|_{k-1}\in \N^{k-1}}\mathds{1}\left\{(t_{j_1},x_{j_1},\dots,t_{j|_{k-1}},x_{j|_{k-1}})\in F\right\}<\infty.
\end{equation}
This, together with Lemma \ref{lemanara}, yield 
\begin{equation}
\sup_{m\in A}\parallel g^\nn(m)-m_{k-1}(\phi_kf_k) \parallel \to 0.
\end{equation}
Hence, $g^\nn(m)\to m_{k-1}(\phi_kf_k)$ uniformly on compact sets. Also, it is obvious that $\parallel \exp(-g^\nn(m))\parallel\leq 1$ for all $m\in \M_{1,k-1}$. Hence, by taking the expectation of (\ref{kocson}) and using the induction step (\ref{indcstep}), we conclude that for any environment in $\Omega'_{k,L}=\Omega_{k-1,L}'\cap \Omega_{k,L}$, as $n\to\infty$ 
\begin{equation}
\begin{aligned}
\mathcal{E}&\left[\exp\Big(-\zeta_{1}^{\nn,\lag}(f_1)-\cdots - \zeta_{k}^{\nn,\lag}(f_{k})\Big)\right]\\&\rightarrow \Phi_{(\chi_{1}^{\lag},\dots,\chi_{k-1}^{\lag})}(f_1,\dots,f_{k-2},f_{k-1}+\phi_{k}f_{k}).
\end{aligned}
\end{equation}
This and (\ref{lapdyn1}) finish the proof of Proposition \ref{weaklemma} with $\Omega'_{L}=\Omega'_{\lag,L}$. 

\end{proof}

\begin{proof}[Proof of Lemma \ref{lemanara} part (i)]
We first prove that for any $u>0$, $\exists \Omega_{k,L}^u\subset \Omega$ with $\mathbb{P}(\Omega_{k,L}^u)=1$ such that for every environment in $\Omega_{k,L}^u$, uniformly in $\mu|_{k-1}$, as $n\to\infty$ $
\bv{k}\rightarrow \Gamma(1+\alpha_k) u^{-\alpha_k}.
$
We first calculate the expectation of $\bv{k}$ over the environment. Note that under $\mathbb{P}$, $\Big\{\Pp^\nn_{\mu|_k}(u,\infty):\mu|_{k}\in V|_k\Big\}$ is an i.i.d. collection. Hence, using (\ref{distlam}) and (\ref{gdist}) we have
\begin{align*}
\mathbb{E}\big[\bv{k}\big]&=\aan{k}\mathbb{E}\left[\big(1-\lambda(\mu|_k)\big)^{\luc}\right]\\&=\aan{k}\int_0^1(1-x)^{\lfloor u\ccn{k}\rfloor}x^{\alpha_k-1}dx\\&=
\aan{k}\int_0^{\infty} C_k^\nn(z) \alpha_k z^{-1+\alpha_k} u^{-\alpha_k} (\ccn{k})^{-\alpha_k} dz
\end{align*}
where we performed the change of variables $x\to u\ccn{k}/z$ and set 
\begin{equation}
 C_k^\nn(z)=\left(1-\frac{z}{u\ccn{k}}\right)^\luc\inc{z\leq u\ccn{k}}. 
\end{equation}
Since $k<\lag$, by the scaling relations in (\ref{scalrel}), we have $\aan{k}=(\ccn{k})^{\alpha_k}$. Using the bounds $u\ccn{k}-1<\luc\leq u\ccn{k}$ and the fact that $\ccn{k}$ is diverging, we have for all $z\geq 0$, $C_k^\nn(z)\to e^{-z}$ and $C_k^\nn(z)\leq e^{-z}$. Hence,
 by the dominated convergence theorem, we can conclude that
\begin{equation}\label{cisiki}
\mathbb{E}[\bv{k}]\longrightarrow u^{-\alpha_k}\alpha_k \Gamma(\alpha_k)=u^{-\alpha_k}\Gamma(1+\alpha_k).
\end{equation}
To control the fluctuations of $\bv{k}$ we write, for $\theta_n\geq 0$,
\begin{align}
\nonumber\mathbb{P}&\Big(\exists \mu|_{k-1}\in V|_{k-1}:\; \big|\bv{k}-\mathbb{E}[\bv{k}]\big|\geq \theta_n\Big)\\&\label{piyano}\leq n^{k-1} \mathbb{P}\Big( \big|\bv{k}-\mathbb{E}[\bv{k}]\big|\geq \theta_n\Big),
\end{align}
and bound the above probabilities using Bennett's bound (see \cite{Ben62}), which states that if $(X(\mu):\mu=1,\dots,n)$ is a family of centered i.i.d. random variables that satisfies $\max_{\mu=1,\dots,n}|X(\mu)|\leq a$, then for any $b^2\geq \sum_{\mu=1}^n \mathbb{E} X^2(\mu)$ and $t<b^2/(2a)$,
\qw\label{bune}
\pp\left(\Big|\sum_{\mu=1}^n X(\mu)\Big|>t\right)\leq \exp\Big(-\frac{t^2}{4b^2}\Big).
\qwe
For fixed $\mu|_{k-1}$, let $\{X(\mu_k):\mu_k=1,\dots,\mmn{k}\}$ be the collection of i.i.d. random variables given by
\begin{equation}
X(\mu_k)=\Pp\Big(G(\lambda(\mu|_k))\geq u\ccn{k}\Big)-\mathbb{E}\left[\Pp\Big(G(\lambda(\mu|_k))\geq u\ccn{k}\Big)\right].
\end{equation}
Then, $\max |X(\mu_k)|\leq 2$ and $\sum_{\mu_k}\mathbb{E}[X^2(\mu_k)]\leq C \mmn{k}/\aan{k}$ for $n$ large enough, for some positive constant $C$. Hence, we can choose $b^2=C\mmn{k}/\aan{k}$ and get that (\ref{piyano}) is bounded above by
\begin{equation}
n^{k-1} \exp\left(-C'\frac{\mmn{k}\theta_n^2}{\aan{k}}\right),
\end{equation}
provided that $\theta_n<C/2$. Since $k<\lag$, the ratio $\mmn{k}/\aan{k}$ diverges at least polynomially fast and thus, we can choose $\theta_n$ such that $\lim_{n\to\infty}\theta_n=0$ and $\mmn{k}\theta_n^2/\aan{k}$ diverges at least polynomially fast. Hence,
\qw\label{sune}
\mathbb{P}\Big(\exists \mu|_{k-1}\in V|_{k-1}^\nn:\; \big|\bv{k}-\mathbb{E}[\bv{k}]\big|\geq \theta_n\Big)\leq \exp(-n^{c})
\qwe
for some $c>0$. Borel-Cantelli Lemma and (\ref{cisiki}) prove that $\mathbb{P}(\Omega_{k,L}^u)=1$, where for any $u>0$, $\Omega_{k,L}^u$ is the set of environments where $\bar{\nu}_k^\nn(u;\mu|_{k-1})\to \bar{\nu}_k(u)$, uniformly in $\mu|_{k-1}\in V|_{k-1}$. Let $\mathbb{Q}^+$ be the set of positive rational numbers. The fact that $\bar{\nu}^\nn(u,\mu|_{k-1})$ is monotone, $\bar{\nu}_k^\nn(u)$ is continuous and the countable intersection of probability one events is also a probability one event finish the proof with 
$\Omega_{k,L}=\cap_{u\in \mathbb{Q}^+}\Omega_{k,L}^u.$
\end{proof}
\begin{proof}[Proof of Lemma \ref{lemanara} part (ii)]

For $m=1,\dots,L$ and $\mu|_{m-1}\in V|_{m-1}$, we define random variables $\Theta_m(\cdot;\mu|_{m-1})$ recursively, from $m=L$ to $m=1$, as follows. Let 
\begin{equation}\label{thetal1}
\Theta_{L}(r;\mu|_{L-1})=\sum_{j=1}^r\lambda^{-1}(\mu|_{L-1}J(j))e_j,\quad r\in\N,
\end{equation}
where the independent collections $\{J(j):j\in\N\}$ and $\{e_j:j\in\N\}$ are i.i.d. with uniform distribution on $[n]$ and the distribution of a mean one exponential random variable, respectively. Also, let
\begin{equation}\label{thetam}
\Theta_m(r;\mu|_{m-1})=\sum_{j=1}^r \Theta^{(j)}_{m+1}\bigl(G\bigl(\lambda\bigl(\mu|_{m-1}J(j)\bigr)\bigr);\mu|_{m-1}J(j)\bigr)
\end{equation}
where $\{J(j):j\in\N\}$ is i.i.d. with uniform distribution on $[n]$, and given this collection, $\{\Theta_{m+1}^{(j)}(\cdot;\mu|_{m-1}J(j)):j\in\N\}$ and $\{G(\lambda(\mu|_{m-1}J(j))):j\in\N\}$ are independent  collections of independent random variables with each $\Theta^{(j)}_{m+1}(\cdot;\mu|_{m-1}J(j))$ having the distribution of $\Theta_{m+1}(\cdot;\mu|_{m-1}J(j))$, and $G(\lambda(\mu|_{m-1}J(j)))$ having the distribution of a geometric random variable with probability of success $\lambda(\mu|_{m-1}J(j))$.  In words, $\Theta_m(\cdot;\mu|_{m-1})$ has the distribution of the first level clock process of the GREM-like trap model, reduced to the subtree attached to $\mu|_{m-1}$. Note that the distribution of $\Theta_{m}(\cdot;\mu|_{m-1})$ is i.i.d. in $\mu|_{m-1}$. Finally, for $m=\lag+1,\dots,L$ and $\mu|_{m-1}\in V|_{m-1}$, we define
\begin{equation}\label{scaltheta}
\Theta_{m}^\nn(r;\mu|_{m-1})=\frac{\Theta_m(\lfloor r \aan{m} \rfloor;\;\mu|_{m-1})}{\tn},\quad r>0.
\end{equation}

\begin{lemma}\label{propcasc} For any $m=\lag+1,\dots,L$, $r>0$ and $u>0$, 
\begin{equation}
\mathbb{E}\Big[\Pp\big(\Theta_m^\nn(r;\mu|_{m-1})\geq u\big)\Big]\longrightarrow P(r Z_m\geq u), 
\end{equation}
where $Z_m$ is a positive random variable whose Laplace transform is given by
\begin{equation}\label{lapz}
E[e^{-\kappa Z_m}]=e^{-d_m \kappa^{\alpha_{m}}},\;\;\;\kappa>0
\end{equation}
for some $d_m>0$. 
\end{lemma}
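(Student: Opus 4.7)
The plan is to prove Lemma~\ref{propcasc} by downward induction on $m$ from $m=L$ to $m=\lag+1$, via Laplace transforms. At each level the target is to show that the joint Laplace transform $\phi_m^\nn(\kappa,r;\mu|_{m-1}):=\E\Ee[e^{-\kappa\Theta_m^\nn(r;\mu|_{m-1})}]$ converges as $n\to\infty$ to $\exp(-d_m(r\kappa)^{\alpha_m})$, uniformly in $\mu|_{m-1}$ on a full-$\pp$-measure event.

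\textbf{Base case $m=L$.} Integrating out first the mean-one exponentials $e_j$ (each contributing $(1+\kappa\lambda^{-1}(\cdot)/\tn)^{-1}$) and then the i.i.d.\ uniform $J(j)$'s gives $\phi_L^\nn=\E[\bar F_n^{\lfloor ra_n(L)\rfloor}]$, where $\bar F_n=n^{-1}\sum_{\mu_L=1}^n(1+\kappa\lambda^{-1}(\mu|_{L-1}\mu_L)/\tn)^{-1}$. The constraint $m=L$ forces $\rho>d_{L,L}=1/\alpha_L$, so typical $\lambda^{-1}$'s are small compared to $\tn/\kappa$ and $\bar F_n\approx 1-(\kappa/\tn)\bar\Lambda_n$ with $\bar\Lambda_n=n^{-1}\sum_{\mu_L}\lambda^{-1}(\mu|_{L-1}\mu_L)$. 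Because $\alpha_L<1$ this empirical mean does not self-average; rescaled by $n^{1/\alpha_L-1}$, it converges to a positive $\alpha_L$-stable random variable $\tilde S(\mu|_{L-1})$ depending only on the sub-environment below $\mu|_{L-1}$. The scaling $a_n(L)=n^{1+\rho-d_{L,L}}$ is calibrated precisely so that $a_n(L)\,n^{1/\alpha_L-1}/\tn=1$, from which $\bar F_n^{\lfloor ra_n(L)\rfloor}\to e^{-\kappa r\tilde S}$ and environmental expectation produces the claimed limit.

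\textbf{Inductive step.} The recursive definition of $\Theta_m$ and the geometric p.g.f.\ identity $E[\psi^{G(\lambda)}]=\lambda\psi/(1-(1-\lambda)\psi)$ yield $\phi_m^\nn=\E[\bar G_n^{\lfloor ra_n(m)\rfloor}]$ with
\[
\bar G_n=\frac{1}{n}\sum_{\mu_m=1}^n\frac{\lambda(\mu|_m)\,\psi_{m+1}^\nn(\kappa;\mu|_m)}{1-(1-\lambda(\mu|_m))\,\psi_{m+1}^\nn(\kappa;\mu|_m)},\qquad \psi_{m+1}^\nn(\kappa;\mu|_m)=\Ee[e^{-\kappa\Theta_{m+1}(1;\mu|_m)/\tn}].
\]
Applying the inductive hypothesis at $r=1/a_n(m+1)$ and using the identity $\Theta_{m+1}(1;\cdot)/\tn=\Theta_{m+1}^\nn(1/a_n(m+1);\cdot)$ produces the pointwise asymptotics $1-\psi_{m+1}^\nn(\kappa;\mu|_m)\sim\kappa\,Z_{m+1}(\mu|_m)/a_n(m+1)$ with an environment-dependent $\alpha_{m+1}$-stable coefficient $Z_{m+1}(\mu|_m)$ inherited from the recursion. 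Combining the algebraic identity $1-\lambda\psi/(1-(1-\lambda)\psi)=(1-\psi)/(\lambda+(1-\lambda)(1-\psi))$ with the heavy tail $\pp(\lambda^{-1}\ge u)=u^{-\alpha_m}$ and the change of variables $\lambda=(1-\psi_{m+1}^\nn)v$ in the environmental integral then produces a Mellin factor $\Gamma(1+\alpha_m)\Gamma(1-\alpha_m)$ that shifts the stable index from $\alpha_{m+1}$ to $\alpha_m$; this is exactly the Bochner subordination encoded in Proposition~\ref{bohsub}. The non-aging scalings (\ref{scalrel}) are tuned so that the deterministic powers of $n$ cancel in $\bar G_n^{\lfloor ra_n(m)\rfloor}$, yielding $\phi_m^\nn\to\exp(-d_m(r\kappa)^{\alpha_m})$.

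\textbf{Main obstacle.} The principal difficulty is the combined handling of two independent sources of environmental randomness at each inductive level---the $\alpha_m$-heavy-tailed trap depths at level $m$ (which, because $\alpha_m<1$, do not self-average) and the $\alpha_{m+1}$-stable coefficient $Z_{m+1}(\mu|_m)$ inherited from level $m+1$---which must combine through multiplicative Bochner subordination to produce exactly an $\alpha_m$-stable Laplace exponent. Uniform convergence in $\mu|_{m-1}$ is propagated through the induction by Bennett-type concentration bounds combined with Borel--Cantelli summation, exactly as in the proof of Lemma~\ref{lemanara}.
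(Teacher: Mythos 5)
Your overall architecture -- downward induction from $m=L$ via Laplace transforms, the geometric p.g.f.\ identity, and the stable limit for the empirical sums at the bottom level -- is the same as the paper's, and your base case is essentially correct (the paper gets $d_L=\Gamma(1-\alpha_L)$ by exactly the computation you sketch). The inductive step, however, contains a genuine error. Writing $1-\bar G_n=n^{-1}\sum_{\mu_m}Y_{\mu_m}$ with $Y_{\mu_m}=\frac{1-\psi}{\lambda\psi+(1-\psi)}$, your change of variables $\lambda=(1-\psi)v$ computes the conditional \emph{mean} $\E_\lambda[Y_{\mu_m}]\approx \Gamma(1+\alpha_m)\Gamma(1-\alpha_m)(1-\psi)^{\alpha_m}$ -- that is where your Mellin factor comes from. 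But for the non-aging levels $m\geq \lag+1$ one has $a_n(m)\gg n$, and then $r\,a_n(m)\,\E_\lambda[Y]\asymp (a_n(m)/n)^{1-\alpha_m}\to\infty$: the exponent built from the mean diverges and would give $\phi_m^\nn\to 0$, not $e^{-d_m(r\kappa)^{\alpha_m}}$. The sum $\frac{a_n(m)}{n}\sum_{\mu_m}Y_{\mu_m}$ converges to a nondegenerate limit with \emph{infinite mean}; the correct functional to evaluate is therefore $n\,\mathbb{E}\bigl[1-\exp\bigl(-\tfrac{r a_n(m)}{n}Y\bigr)\bigr]$ (the paper's display (\ref{koktan})), whose limit is governed by the tail $\pp(Y>u n/a_n(m))$, i.e.\ by the $\alpha_m$-tail of $\lambda^{-1}$ at scale $n^{1/\alpha_m}$ combined with the fractional moment $\E[Z_{m+1}^{\alpha_m}]$. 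This yields $d_m=d_{m+1}^{\alpha_m/\alpha_{m+1}}\Gamma(1-\alpha_m/\alpha_{m+1})$, not a $\Gamma(1+\alpha_m)\Gamma(1-\alpha_m)$ factor. Relatedly, you cannot obtain $1-\psi\sim\kappa Z_{m+1}/a_n(m+1)$ by plugging $r=1/a_n(m+1)$ into the inductive hypothesis, which holds for fixed $r$; the paper instead deduces from it that $a_n(m+1)\varphi_{m+1}(\kappa/\tn;\cdot)$ converges in distribution (over the environment) to $\kappa Z_{m+1}$, via the equivalence (\ref{edi}).

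A secondary point: the lemma is an \emph{annealed} statement -- the outer $\mathbb{E}$ is already there, and by the i.i.d.\ structure the quantity does not depend on $\mu|_{m-1}$ -- so no Bennett/Borel--Cantelli concentration is needed here, and in fact none is available: since $a_n(m)\gg n$ the quenched Laplace transforms $\Ee[e^{-\kappa\Theta_m^\nn(r;\mu|_{m-1})}]$ do \emph{not} concentrate but remain random in the limit. The concentration machinery belongs to Lemma \ref{lemanara}, where the averaging is over $n$ sites at an aging level with $a_n(\lag)\ll n$.
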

\begin{proof}[Proof of Lemma \ref{propcasc} ]
Note that it is enough to show that for any $r>0$ and $\kappa>0$
\begin{equation}\label{bsg}
\mathbb{E}\left[\Ee\Big[\exp\big({-\kappa \Theta_{m}^\nn(r;\mu|_{m-1})}\big)\Big]\right]\longrightarrow e^{-d_m(\kappa r)^{\alpha_{m}}}.
\end{equation}
We use the following fact several times in the proof: Let $X_n,X$ be random variables on $[0,1]$ and $a_n$ be a diverging scale, then
\begin{equation}\label{edi}
E[X_n^{r a_n}]\underset{n\to\infty}\longrightarrow E[e^{-rX}] \;\;\forall r>0 \Leftrightarrow E[e^{-r a_n (1-X_n)}]\underset{n\to\infty}\longrightarrow E[e^{-rX}]\;\;\forall r>0.
\end{equation}
We proceed the proof by induction. We first prove the case where $m=L$. By (\ref{thetal1}) and (\ref{scaltheta}),
\begin{equation}
\Ee\Big[\exp\big({-\kappa {\Theta_{L}^\nn(r;\;\mu|_{L-1})}}\big)\Big]=\left\{\sum_{\mu_L=1}^{n}\frac{1}{\mmn{L}}\frac{1}{1+\kappa \lambda^{-1}(\mu|_L)/\tn}\right\}^{\lfloor r\aan{L}\rfloor}.
\end{equation}
By (\ref{scalrel}), we have $\aan{L}\gg \mmn{L}$ and $c_n=\ccn{L}=\aan{L}n^{1/\alpha_L-1}$. Thus,
\begin{equation*}
\begin{aligned}
&\mmn{L}\E\left[1-\exp\left(-\frac{\aan{L}}{\mmn{L}} \frac{r \kappa \lambda^{-1}(\mu|_L)/\ccn{L}}{1+\kappa\lambda^{-1}(\mu|_L)/\ccn{L}}\right)\right]\\&
%=\mmn{L}\int_1^\infty \left\{1-\exp\left(-\frac{\aan{L}}{\mmn{L}} \frac{r \kappa x/\ccn{L}}{1+\kappa x/\ccn{L}}\right)\right\}\alpha_{L}x^{-1-\alpha_L}dx\\&
=\int_{n^{-1/{\alpha_L}}}^\infty \left\{1-\exp\left(- \frac{r \kappa y}{1+\kappa y \mmn{L}/\aan{L}}\right)\right\}\alpha_{L}y^{-1-\alpha_L}dy\\&
\underset{n\to\infty}\longrightarrow \int_0^{\infty}\Big(1-\exp\left(- {r \kappa y}\right)\Big)\alpha_{L}y^{-1-\alpha_L}dy=\Gamma(1-\alpha_L)(r\kappa)^{\alpha_L}.
\end{aligned}
\end{equation*}
In taking the above limit we used $\mathbb{P}(\lambda^{-1}(\mu|_L)\geq u)=u^{-\alpha_L}$, the bound $1-e^{-cy}\leq 1\wedge cy$ and the dominated convergence theorem. By the independence structure of the landscape, 
\begin{align}
\nonumber &\E\left[\exp \left(-\sum_{\mu_L=1}^n\frac{\aan{L}}{\mmn{L}}\frac{r\kappa\lambda^{-1}(\mu|_L)/\ccn{L}}{1+\kappa\lambda^{-1}(\mu|_L)/\ccn{L}}\right)\right]\\\label{eseginski}&=
\left(1-\E\left[1-\exp \left(-\frac{\aan{L}}{\mmn{L}} \frac{r\kappa\lambda^{-1}(\mu|_L)/\ccn{L}}{1+\kappa\lambda^{-1}(\mu)/\ccn{L}}\right)\right]\right)^\mmn{L}\\ \nonumber &\underset{n\to\infty}\longrightarrow\exp\Big(-\Gamma(1-\alpha_L) (r\kappa)^{\alpha_L}\Big).
\end{align}
Finally, using (\ref{edi}) with $X_n=\sum_{\mu_L}\frac{1}{\mmn{L}}\frac{1}{1+\kappa \lambda^{-1}(\mu|_L)/\ccn{L}}$ and $a_n=\aan{L}$ finish the proof of (\ref{bsg}) for $m=L$, where $d_{L}=\Gamma(1-\alpha_L)$. 

Now assume that (\ref{bsg}) is true for $m+1$. By (\ref{thetam}), we have
\begin{equation}
\Ee\left[\exp\big(-\kappa \Theta_m(1;\mu|_{m-1})\big)\right]=\sum_{\mu_{m}=1}^n\frac{1}{\mmn{m+1}}\Ee\left[\exp\big(-\kappa \Theta_{m+1}(G(\lambda(\mu|_{m}));\mu|_{m})\big)\right].
\end{equation}
Using the geometric distribution and (\ref{thetam}) we get
\begin{equation*}
\Ee\left[\exp\big(-\kappa \Theta_{m+1}(G(\lambda(\mu|_{m}));\mu|_{m})\big)\right]=
\frac{1}{1+\lambda^{-1}(\mu|_{m})\varphi_{m+1}(\kappa;\mu|_{m})},
\end{equation*}
where
\begin{equation}\label{kabiz}
\varphi_{m+1}(\kappa;\mu|_{m})=\frac{1-\Ee[e^{-\kappa \Theta_{m+1}(1;\mu|_{m})} ]}  {\Ee[e^{-\kappa \Theta_{m+1}(1;\mu|_{m})} ]}.
\end{equation}
Hence,
\begin{equation*}
\Ee\Big[\exp\big({-\kappa {\Theta_m^\nn( r;\;\mu|_{m-1})}}\big)\Big]=\left\{\sum_{\mu_{m}}\frac{1}{\mmn{m+1}}\frac{1}{1+\lambda^{-1}(\mu|_{m})\varphi_{m+1}(\kappa/\tn;\mu|_{m})}\right\}^{\lfloor r\aan{m}\rfloor}.
\end{equation*}
By the induction step, for any $\kappa,r>0$,
\begin{equation}
\mathbb{E}\Big[\Ee[e^{-\frac{\kappa}{\tn}\Theta_{m+1}(1;\mu|_{m})}]^{\lfloor r\aan{m+1} \rfloor}\Big]\longrightarrow E[e^{-\kappa r Z_{m+1}}],
\end{equation}
and consequently,
\begin{equation}\label{fena}
\mathbb{E}\Big[\exp(-r\aan{m+1} \varphi_{m+1}(\kappa/\tn;\mu|_{m} )\Big]\longrightarrow E[e^{-r\kappa Z_{m+1}}]=e^{-d_{m+1} (r\kappa)^{\alpha_{m+1}}}.
\end{equation}
We calculate
\begin{align*}
&\mmn{m+1}\mathbb{E}\left[1-\exp\left(-\frac{\aan{m}}{\mmn{m+1}}\frac{r\lambda^{-1}(\mu|_{m})\varphi_{m+1}(\kappa/\tn;\mu|_{m})}{1+\lambda^{-1}(\mu|_{m})\varphi_{m+1}(\kappa/\tn;\mu|_{m})}\right)\right]
\\&=\mathbb{E}\left[\mmn{m+1}\int_1^\infty \left(1-\exp(-\frac{\aan{m}}{\mmn{m+1}}\frac{rx\varphi_{m+1}(\kappa/\tn;\mu|_{m})}{1+x\varphi_{m+1}(\kappa/\tn;\mu|_{m})})\right)\alpha_{m}x^{-1-\alpha_{m}}dx\right].
\end{align*}
Using the change of variables $x=y n^{1/\alpha_{m}}$ and $\aan{m+1}=\ccn{m}=\aan{m} n^{1/\alpha_m-1}$ we get that the above display is equal to
\begin{equation}\label{koktan}
\begin{aligned}
\int_{n^{-1/\alpha_{m}}}^\infty \mathbb{E}&\left[1-\exp\left(-\frac{ry\varphi_{m+1}(\kappa/\tn;\mu|_{m}) \aan{m+1}  }     {1+y\varphi_{m+1}(\kappa/\tn;\mu|_{m}) \aan{m+1}\mmn{m+1}/\aan{m}}     \right)\right]\alpha_{m+1}y^{-1-\alpha_{m+1}}dy.
\end{aligned}
\end{equation}
Since $\alpha_{m}<\alpha_{m+1}$ and $\mmn{m}\ll \aan{m}$, by (\ref{fena}), we can conclude that the sequence in (\ref{koktan}) converges to 
\begin{equation}
\int_{0}^\infty \Big(1-\exp\bigl(-d_{m+1}(ry\kappa)^{\alpha_{m+1}}\bigr)\Big)\alpha_m y^{-1-\alpha_m}dy=d_m (r\kappa)^{\alpha_m}
\end{equation}
where $
d_m=d_{m+1}^{\alpha_{m}/\alpha_{m+1}}\Gamma(1-\frac{\alpha_{m}}{\alpha_{m+1}})$. A calculation as in (\ref{eseginski}) and (\ref{edi}) finish the proof.

\end{proof}
As before, we have $
\mathbb{E}[\bv{\lag}]=\aan{\lag}\mathbb{E}[\Pp_{\mu|_{\lag}}^\nn((u,\infty))]
$. Note that the distribution of $\Lambda(j|_{\lag})$, when $J(j|_{\lag})=\mu|_{\lag}$, is that of $\Theta_{\lag+1}\bigl(G(\lambda(\mu|_{\lag}));\mu|_\lag\bigr)$ where $\Theta_{\lag+1}(\cdot;\mu|_{\lag})$ and $G(\lambda(\mu|_{\lag}))$ are independent. Hence, by (\ref{pmuldef}),
\begin{equation}
\Pp_{\mu|_{\lag}}^\nn\bigl((u,\infty)\bigr)=\sum_{i=1}^\infty \lambda(\mu|_{\lag})\big(1-\lambda(\mu|_{\lag})\big)^{i-1}\Pp(\Theta_{\lag+1}(i;\mu|_{\lag})\geq u \tn).
\end{equation}
Thus,
\begin{equation}\label{araz1}
\mathbb{E}[\bv{\lag}]=\sum_{i=1}^\infty g_{n,u}\left(\frac{i}{\ccn{\lag}}\right)h_n\left(\frac{i}{\ccn{\lag}}\right)\frac{1}{\ccn{\lag}}
\end{equation}
where
\begin{equation}
g_{n,u}(r)=\mathbb{E}\Big[\Pp\big(\Theta_{\lag+1}^\nn(r;\mu|_{\lag})\geq u\big)\Big],
\end{equation}
and
\begin{equation}
h_n(r)=\aan{\lag}\ccn{\lag}\mathbb{E}\left[\lambda(\mu|_{\lag})\big(1-\lambda(\mu|_{\lag})\big)^{\lfloor r\ccn{\lag}\rfloor -1}\right].
\end{equation}
By Proposition \ref{propcasc} we get
\begin{equation}\label{araz2}
g_{n,u}(r)\longrightarrow g_u(r):=P(rZ_{\lag+1}\geq u),
\end{equation}
and a simple calculation yields
\begin{equation}\label{araz3}
h_n(r)\longrightarrow \alpha_{\lag}r^{-1-\alpha_{\lag}} \Gamma(1+\alpha_{\lag}).
\end{equation}
Using (\ref{lapz}) and a Tauberian theorem (see e.g. Corollary 8.1.7 in \cite{Bingham}) we can conclude that there exists a $C>0$ such that for all $r$ small enough, 
\begin{equation}\label{araz4}
g_u(r)\leq C r^{\alpha_{\lag+1}} u^{-\alpha_{\lag+1}}.
\end{equation}
Via (\ref{araz1}) and (\ref{araz2})-(\ref{araz4}),
\begin{equation}
\mathbb{E}[\bv{\lag}]\longrightarrow \Gamma(1+\alpha_{\lag})\int_0^\infty g_u(r)\alpha_{\lag}r^{-1-\alpha_{\lag}}dr.
\end{equation}
We use Bennett's bound once again to finish the proof. For fixed $\mu|_{\lag-1}$, consider the collection $\{X_{\mu_{\lag}}:\mu_{\lag}\in \mathcal{M}_{\lag}\}$ where
\begin{equation}
X_{\mu_{\lag}}=\Pp_{\mu|_{\lag}}^\nn\bigl((u,\infty)\bigr)-\mathbb{E}\Big[\Pp_{\mu|_{\lag}}^\nn\bigl((u,\infty)\bigr)\Big].
\end{equation}
Clearly, $\max_{\mu_{\lag}}|X_{\mu_{\lag}}|\leq 2$, and using Jensen's inequality we get
\begin{equation}
\sum_{\mu_{\lag}}\E[X_{\mu_{\lag}}^2]\leq C \frac{\mmn{\lag}}{\aan{\lag}}.
\end{equation}
Hence, since $\aan{\lag}\ll\mmn{\lag}$, we can proceed exactly as in the proof of Lemma \ref{lemanara} part (i) to conclude that there exists $\Omega_{\lag,L}\subset \Omega$ with $\mathbb{P}(\Omega_{\lag,L})=1$ such that for any environment in $\Omega_{\lag,L}$, for any $u>0$, uniformly in $\mu|_{\lag-1}$
\begin{equation}
\bv{\lag}\longrightarrow \Gamma(1+\alpha_{\lag})\int_0^\infty g_u(r)\alpha_{\lag}r^{-1-\alpha_{\lag}}dr.
\end{equation}
We have 
\begin{align}
\int_0^\infty g_u(r)\alpha_{\lag}r^{-1-\alpha_{\lag}}dr&=\int_0^\infty P(Z_{\lag+1}\geq u/r)\alpha_{\lag}r^{-1-\alpha_{\lag}}dr\\&=u^{-\alpha_{\lag}} \int_0^\infty P(Z_{\lag+1}\geq s)\alpha_{\lag}s^{-1+\alpha_{\lag}}ds.
\end{align}
Since $\alpha_{\lag}<\alpha_{\lag+1}$, by the Laplace transform, (\ref{lapz}), of $Z_{\lag+1}$,
\begin{equation}
D_\lag=\Gamma(1+\alpha_\lag)\int_0^\infty P(Z_{\lag+1}\geq s)\alpha_{\lag}s^{\alpha_{\lag}-1}ds<\infty.
\end{equation}
Thus, we are finished with the proof of Lemma \ref{lemanara} part (ii). 
\end{proof}

\subsection{Continuity of functionals on the space cascade of point measures} In this subsection we prove that functionals used to define the clock processes are, after certain truncations, continuous.
\newcommand{\wcm}{\widetilde{\mathcal{M}}}

\newcommand{\g}{{(\gamma)}}
\newcommand{\gam}{{(\gamma,\gamma^{-1})}}
\newcommand{\twm}{{\widetilde{\mathcal{M}}}}

For $\gamma>0$, let $\M^\g_{1,l}$ be the subset of $\M_{1,l}$ such that for $m\in\M^\g_{1,l}$, whose marks are given by
\begin{equation}\label{collpoint}
\left\{(t_{j|_k},x_{j|_k}):j|_l\in \N^l,k=1,\dots,l\right\},
\end{equation}
it holds true that
\begin{equation}\label{finite1}
m_{1}\big((0,\infty)\times \{\gamma,\gamma^{-1}\}\big)=0,
\end{equation}
and for $k=1,\dots,l-1$ and $j|_k\in\N^k$,
\begin{equation}\label{finitek}
m_{k+1}\Big(\{(t_{j_1},x_{j_1},\dots,t_{j|_k},x_{j|_k})\}\times \partial \big( (0,x_{j|_k}]\times (\gamma,\gamma^{-1})\big)\Big)=0.
\end{equation}
Here, $\partial$ denotes the boundary of a set. For $m\in\mathcal{M}_{1,l}$, let  $m^\g=(m_1^\g,\dots,m_l^\g)$ be
\begin{equation}
m_k^\g=\sum_{j|_k\in\N^k} \vep_{(t_{j_1},x_{j_1},\dots,t_{j|_k},x_{j|_k})}\prod_{i=1}^k\mathds{1}\{x_{j|_i}\in\gam\}.
\end{equation}
Note that then, $m^\g\in\wt{\M}_{1,l}$. We introduce the maps $T^\g_l: \M_{1,l}\to D([0,\infty))$ and $\overline{T}_l^{\g}:\M_{1,l}\to \wt{\M}_{1,l-1}$ by
\begin{equation}
T^\g_{l}(m)=T_{l}(m^\g),\quad \text{   and   }\quad\overline{T}_l^\g(m)=\overline{T}_l(m^\g). 
\end{equation}

\iffalse

%HERE THE OLD TRADITIONAL DEFINITION OF CLOCK PROCESSES

For any $j|_k$ we define a mapping $F_{j|_k}:\twm_l^\g\to (0,\infty)$ 
\begin{equation}
 F_{j|_k}(m)=\sum_{t^\g_{j|_k j_{k+1}}\leq x^\g_{j|_k}}\cdots \sum_{\;\;\;t^\g_{j|_{l-1}j_l}\leq x^\g_{j|_{l-1}}} x^\g_{j|_l}. 
\end{equation}
For $k=1,\dots,l$, $r=1,\dots,k$ and $j|_{r}$ we define a mapping $I^k_{r,j|_r}:M^\g_l\to \N$ by
\begin{equation}
 I^k_{r,j|_r}(m)=\sum_{t^\g_{j|_{r+1}}\leq x_{j|_r}}\cdots \sum_{t^\g_{j|_{k}}\leq x^\g_{j|_{k-1}}} 1,\quad r<k,
\end{equation}
and 
\begin{equation}
 I^k_{k,j|_k}(m)=1.
\end{equation}
Let $\{X_m:\;m\in\N\}$ be a sequence of positive numbers, and we set $X_0= 0$. Let $z\geq 0$. For $t\geq z$ we define
\begin{equation}
 g(t)=\max\{m: z+X_0+X_1+\cdots+X_m \leq t \}
\end{equation}
and 
\begin{equation}
 h(t)=z+X_0+X_1+\cdots X_{g(t)}.
\end{equation}
We call $g$ the counting function with inter arrival times (c.f.w.i.a.t.) $\{X_m:m\in\N\}$ on $[z,\infty)$, and $h$ its dual.

Let $g_1^k$ be the c.f.w.i.a.t $\{I_{1,j_1}^k(m^\g):j_1\in\N\}$ on $[0,\infty)$ and $h_1^k$ be its dual. From $r=2$ to $r=k$, we recursively define $g_r^k$ as the c.f.w.i.a.t. $\{I_{r,j|_{r-1}j_r}^k(m^\g):j_r\in\N\}$ on $[h_1^k(t)+\cdots+ h_{r-1}^k(t),\infty)$, where
\begin{equation}
 j|_{r-1}=(g_1^k(t)+1)\cdots (g_{r-1}^k(t)+1),
\end{equation}
and we denote its dual by $h_r^k$. 

\fi

%HERE THE OLD TRADITIONAL DEFINITION OF CLOCK PROCESSES

\begin{lemma}\label{contlem}
For any $\gamma>0$, the maps $T_{l}^\g$ and $\overline{T}_l^\g$ are continuous on $ \mathcal{M}^\g_{1,l}$. 
\end{lemma}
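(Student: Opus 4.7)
The plan is to derive the continuity of both maps from a single, careful analysis of how truncated marks behave under product-vague limits, and then to translate this mark-wise convergence into Skorohod $J_1$ convergence for $T_l^\g$ and into product-vague convergence of cascade measures for $\overline{T}_l^\g$. First I would fix $m \in \M^\g_{1,l}$ and a sequence $(m^\nn)_{n \geq 1}$ in $\M_{1,l}$ with $m^\nn \to m$ in the product vague topology. For every $T>0$ chosen outside the (at most countable) set of atoms of the $t$-marginal of $m_1$, the relatively compact set $K_T^1 = (0,T]\times[\gamma,\gamma^{-1}]$ satisfies $m_1(\partial K_T^1)=0$ by (\ref{finite1}). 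Standard Portmanteau-type results for vague convergence of point measures (see e.g. Proposition 3.13 in \cite{Re87}) then imply that for all $n$ large, the restriction of $m_1^\nn$ to $K_T^1$ has the same finite cardinality as that of $m_1$, and the marks may be labelled so that $(t_{j_1}^\nn,x_{j_1}^\nn) \to (t_{j_1},x_{j_1})$. I would then iterate: given level-$k$ mark convergence, the boundary-free condition (\ref{finitek}) and local finiteness force, for every $j|_k$, convergence in cardinality and position of the marks of $m_{k+1}^\nn$ sitting above the converging parent marks to those of $m_{k+1}$ above $(t_{j_1},x_{j_1},\dots,t_{j|_k},x_{j|_k})$.

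For $T_l^\g$, on any $[0,T]$ as above, $T_l^\g(m)$ is a non-decreasing step function with finitely many jumps, located at the surviving level-$1$ times $t_{j_1}$, with jump heights
\[
\sum_{t_{j_1 j_2}\leq x_{j_1}}\cdots\sum_{t_{j|_l}\leq x_{j|_{l-1}}} x_{j|_l},
\]
each a finite sum of converging marks, none of which lies on a threshold $t = x_{j|_k}$ thanks to (\ref{finitek}). The level-$1$ times of $T_l^\g(m)$ are distinct (all measures are simple) and the corresponding times of $T_l^\g(m^\nn)$ converge to them; I would build a piecewise-linear time change $\lambda_n$ mapping $t_{j_1}^\nn \mapsto t_{j_1}$, so that $\lambda_n \to \mathrm{id}$ uniformly on $[0,T]$ and $T_l^\g(m^\nn) \circ \lambda_n^{-1} \to T_l^\g(m)$ uniformly on $[0,T]$. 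This yields Skorohod $J_1$ convergence on each such window, hence on $[0,\infty)$.

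For $\overline{T}_l^\g$, the map $\overline{T}_l$ is a deterministic relabelling built from the running quantities $Z(i)$, $g(i)$, $h(i)$, $s(i)$ applied to the truncated marks, each of which, on any relatively compact set, depends on only finitely many marks of the input. The mark-wise convergence above carries through to these quantities, and hence to the new marks of $\overline{T}_l^\g(m^\nn)$, level by level. Since the topology on $\wt{\M}_{1,l-1}$ is the product vague topology, mark-by-mark convergence on every relatively compact window gives $\overline{T}_l^\g(m^\nn) \to \overline{T}_l^\g(m)$ in $\wt{\M}_{1,l-1}$.

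The main obstacle will be the bookkeeping: verifying that for a cofinal family of windows $T$ every relevant sub-boundary has zero $m$-mass simultaneously (so that cardinalities stabilise and marks can be paired consistently at all levels), and checking that the piecewise-linear time change is compatible with the cascade of nested sub-sums defining $T_l^\g$. The defining conditions (\ref{finite1})--(\ref{finitek}) of $\M_{1,l}^\g$ are precisely tailored to remove these pathologies, so the argument reduces to a level-by-level application of the continuous-mapping theorem for vague convergence in the presence of only finitely many relevant marks.
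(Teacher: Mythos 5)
Your proposal is correct and follows essentially the same route as the paper: mark-wise convergence on boundary-free compact windows (the content of the paper's Proposition \ref{gozde}, which formalizes the level-by-level iteration of Resnick's Proposition 3.13 that you describe), the observation that conditions (\ref{finite1})--(\ref{finitek}) keep marks off the truncation and threshold boundaries, a piecewise-linear time change for the $J_1$ convergence of $T_l^\g$, and mark-by-mark convergence on relatively compact windows for $\overline{T}_l^\g$. No substantive differences.
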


We use the following proposition which is also a generalization from the usual point processes. 
\begin{proposition}\label{gozde}
Let $A_1,\dots,A_l\subset H$ be compact sets with $$m_1(\partial A_1)=\cdots=m_l(\partial (A_1\times\cdots\times A_l))=0.$$ Let $m^\nn,m\in \M_{1,l}$ be such that $m^{\nn}\underset{n\to\infty}\longrightarrow m$. Then, for any $k=1,\dots,l$, after relabeling,
\begin{equation*}
m_k\Big(\cdot \cap (A_1\times \cdots\times A_k)\Big)=\sum_{j_1=1}^{q_1}\;\sum_{j_2=1}^{q_2(j_1)}\;\;\cdots \;\sum_{j_k=1}^{q_k(j|_{k-1})}\vep_{(t_{j_1},x_{j_1},\dots,t_{j|_{k}},x_{j|_{k}})}
\end{equation*}
and
\begin{equation*}
m_k^\nn\Big(\cdot \cap (A_1\times \cdots\times A_k)\Big)=\sum_{j_1=1}^{q_1}\;\sum_{j_2=1}^{q_2(j_1)}\;\;\cdots \;\sum_{j_k=1}^{q_k(j|_{k-1})}\vep_{(t^\nn_{j_1},x^\nn_{j_1},\dots,t^\nn_{j|_{k}},x^\nn_{j|_{k}})}
\end{equation*}
for all $n\geq n(A_1,\dots,A_l)$. Moreover, for any $r=1,\dots,k$; $j_1=1,\dots,q_1$; $j_2=1,\dots,q_2(j_1);\dots;$ $j_r=1,\dots,q_r(j|_{r-1})$, as $n\to\infty$
\begin{equation*}
\parallel (t_{j|_r}^\nn,x_{j|_r}^\nn)-(t_{j|_r},x_{j|_r})\parallel \longrightarrow 0.
\end{equation*}
\end{proposition}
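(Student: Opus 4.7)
The plan is to establish the proposition by induction on $k$, applying level by level the classical portmanteau result for simple point measures (Proposition 3.13 in \cite{Re87}): if $\mu^{(n)} \to \mu$ vaguely on a locally compact second-countable space and $A$ is a relatively compact Borel set with $\mu(\partial A) = 0$, then for $n$ large $\mu^{(n)}(A) = \mu(A)$, and after a suitable relabeling the atoms of $\mu^{(n)}$ in $A$ converge to those of $\mu$. The cascade constraint (\ref{mkcoll}), which forces every mark of $m_k$ to have first $k-1$ coordinates coinciding with a mark of $m_{k-1}$, will allow me to match the relabelings across levels.

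The base case $k=1$ is immediate: $m_1^{(n)} \to m_1$ vaguely on $H$ together with $m_1(\partial A_1) = 0$ gives the count $q_1$ and the convergence of the $q_1$ labeled marks inside $A_1$. For the induction step, suppose the statement holds at level $k-1$, with convergent marks $(t^{(n)}_{j|_i}, x^{(n)}_{j|_i}) \to (t_{j|_i}, x_{j|_i})$ for $i \leq k-1$. First I would note that $m_{k-1}(\partial(A_1 \times \cdots \times A_{k-1})) = 0$ forces every limit mark $(t_{j_1}, x_{j_1}, \dots, t_{j|_{k-1}}, x_{j|_{k-1}})$ to lie in the interior of $A_1 \times \cdots \times A_{k-1}$. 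Choose pairwise disjoint open neighborhoods $U_{j|_{k-1}}$ of these marks, contained in that interior, each with an $m_{k-1}$-null boundary and containing no other mark of $m_{k-1}$. The cascade constraint (\ref{mkcoll}) guarantees that $m_k$ puts no mass on $\partial U_{j|_{k-1}} \times A_k$ either (any such mark would project onto a mark of $m_{k-1}$ on $\partial U_{j|_{k-1}}$), while $m_k(\bar{U}_{j|_{k-1}} \times \partial A_k) \leq m_k(\bar{A}_1 \times \cdots \times \bar{A}_{k-1} \times \partial A_k) = 0$ since this set is contained in $\partial(A_1 \times \cdots \times A_k)$.

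With these null-boundary properties in hand, I would apply the classical result a second time, to the vague convergence $m_k^{(n)} \to m_k$ on each compact set $\bar{U}_{j|_{k-1}} \times A_k$. This produces a finite count $q_k(j|_{k-1})$ stable for large $n$ together with a labeling $j_k = 1, \dots, q_k(j|_{k-1})$ such that $(t^{(n)}_{j|_k}, x^{(n)}_{j|_k}) \to (t_{j|_k}, x_{j|_k})$. By the inductive step, for $n$ large the only mark of $m_{k-1}^{(n)}$ inside $U_{j|_{k-1}}$ is the inductively labeled $(t^{(n)}_{j|_{k-1}}, x^{(n)}_{j|_{k-1}})$; the cascade constraint (\ref{mkcoll}) then identifies every mark of $m_k^{(n)}$ counted in $\bar{U}_{j|_{k-1}} \times A_k$ as a child of that mark, yielding the required nested index $j|_k$. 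Taking the union over $j|_{k-1}$ exhausts all marks of $m_k^{(n)}$ in $A_1 \times \cdots \times A_k$, because vague convergence and $m_k(\partial(A_1 \times \cdots \times A_k)) = 0$ force the total count on $A_1 \times \cdots \times A_k$ to be $\sum_{j|_{k-1}} q_k(j|_{k-1})$ for $n$ large.

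The main obstacle is administrative rather than conceptual: the proof hinges on choosing the neighborhoods $U_{j|_{k-1}}$ with the correct null-boundary behavior at every relevant level simultaneously, which in turn rests on the finiteness of the number of level-$k$ marks of $m_k$ inside the compact set $A_1 \times \cdots \times A_k$ (from simplicity and local finiteness of the cascade) and on the systematic use of the cascade constraint (\ref{mkcoll}) to transfer null-boundary conditions from level $k-1$ to level $k$. Once this bookkeeping is in place, the induction yields the stated joint labeling and the position convergence $\|(t^{(n)}_{j|_r}, x^{(n)}_{j|_r}) - (t_{j|_r}, x_{j|_r})\| \to 0$ for every admissible $r \leq k$ and every admissible index $j|_r$.
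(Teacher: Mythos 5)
Your proposal is correct and follows essentially the same route as the paper: both arguments iterate Resnick's Proposition 3.13, localizing around each limit mark of level $k-1$ with small disjoint neighborhoods of $m_{k-1}$-null boundary inside the interior of $A_1\times\cdots\times A_{k-1}$, and use the cascade constraint to transfer the null-boundary conditions and the relabeling to level $k$. The only difference is presentational — you phrase it as a formal induction with generic neighborhoods $U_{j|_{k-1}}$, while the paper works out $k=2$ explicitly with closed balls $\bar B_\epsilon$ and then iterates.
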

\begin{proof}[Proof of Proposition \ref{gozde}]

When $k=1$ we have the usual point processes case and it is covered by Proposition 3.13 in \cite{Re87}. We now consider $k=2$. Via, once again, Proposition 3.13 in \cite{Re87}, there exists a $q_1\in\N$ such that, after relabeling, for any $n\geq n(A_1)$ 
\begin{equation}\label{starr}
m_1(\cdot\cap A_1)=\sum_{j_1=1}^{q_1} \vep_{(t_{j_1},x_{j_1})},\quad m_1^\nn(\cdot\cap A_1)=\sum_{j_1=1}^{q_1} \vep_{(t_{j_1}^\nn,x_{j_1}^\nn)},
\end{equation}
and
\begin{equation}
\parallel (t_{j_1}^\nn,x_{j_1}^\nn)- (t_{j_1},x_{j_1})\parallel \to 0,\quad \forall j_1=1,\dots,q_1.
\end{equation}
Let $B_{r}(z)$ and $\bar{B}_r(z)$ denote the open and closed ball, respectively, around $z\in H^k$ with radius $r$. We choose $\epsilon>0$ small enough so that $\bar{B}_{\epsilon}(t_1,x_1),\dots,\bar{B}_{\epsilon}(t_{q_1},x_{q_1})$ are disjoint and for any $j_1=1,\dots,q_1$, we have  $m_1(\bar{B}_\epsilon((t_{j_1},x_{j_1})))=1$ and $\bar{B}_\epsilon(t_{j_1},x_{j})\subseteq A_1^o$, where $A_1^o$ denotes the interior of $A_1$ (recall that we assume that all the point measures are simple). Hence, for any $j_1=1,\dots,q_1$ we can find a $q_2(j_1)$ such that, after relabeling,
\begin{equation}\label{starr2}
m_2(\cdot\cap \bar{B}_{\epsilon}(t_{j_1},x_{j_1})\times A_2)=\sum_{j_2=1}^{q_2(j_1)}\vep_{(t_{j_1},x_{j_1},t_{j_1j_2},x_{j_1j_2})}. 
\end{equation}
We choose $n$ large enough so that for any $j_1=1,\dots,q_1$, we have $\parallel (t_{j_1}^\nn,x_{j_1}^\nn)-(t_{j_1},x_{j_1})\parallel<\epsilon/2$. Thus, by (\ref{starr}), (\ref{starr2}) and Proposition 3.13 in \cite{Re87}, after relabeling, for $n\geq n(\bar{B}_{\epsilon}(t_{j_1},x_{j_1})\times A_2)$,
\begin{equation}
m_2^\nn(\cdot\cap \bar{B}_{\epsilon}(t_{j_1},x_{j_1})\times A_2)=\sum_{j_2=1}^{q_2(j_1)}\vep_{(t_{j_1}^\nn,x_{j_1}^\nn,t_{j_1j_2}^\nn,x_{j_1j_2}^\nn)}
\end{equation}
and
\begin{equation}\label{starr3}
\parallel (t_{j_1j_2}^\nn,x_{j_1j_2}^\nn)-(t_{j_1j_2},x_{j_1j_2})\parallel \to 0,\quad \forall j_2=1,\dots,q_2(j_1).
\end{equation}
Also, by (\ref{starr})-(\ref{starr3}), for $n\geq \max_{j_1=1,\dots,q_1}n(\bar{B}_{\epsilon}(t_{j_1},x_{j_1})\times A_2)=n(A_1,A_2)$,
\begin{equation}
m_2(\cdot\cap A_1\times A_2)=\sum_{j_1=1}^{q_1}\sum_{j_2=1}^{q_2(j_1)}\vep_{(t_{j_1},x_{j_1},t_{j_1j_2},x_{j_1j_2})}
\end{equation}
and
\begin{equation}
m_2^\nn(\cdot\cap A_1\times A_2)=\sum_{j_1=1}^{q_1}\sum_{j_2=1}^{q_2(j_1)}\vep_{(t_{j_1}^\nn,x_{j_1}^\nn,t_{j_1j_2}^\nn,x_{j_1j_2}^\nn)}.
\end{equation}
This proves the case $k=2$. Iterating the exact same procedure finishes the proof. 
\end{proof}

\begin{proof}[Proof of Lemma \ref{contlem}]

Let $m\in\mathcal{M}^\g_{1,l}$ and $m^\nn\in \M_{1,l}$ with $m^\nn\to m$ as $n\to\infty$.  We first prove that $T_{l}^\g(m^\nn)\to T_{l}^\g(m)$ on $D([0,\infty))$ equipped with the Skorohod $J_1$ topology. It is enough to show that $T_{l}^\g(m^\nn)\to T_{l}^\g(m)$ on $D([0,t'])$ for any continuity point $t'$ of $T_{l}^\g(m)$, which is equivalent to $m_{1}(\{t'\}\times (\gamma,\gamma^{-1}))=0$. Since $m\in \mathcal{M}_{1,l}^\g$, for any such $t'$ there exists a $q_1$ such that, after relabeling,
\begin{equation}
m_1\bigl(\cdot \cap (0,t']\times \gam \bigr)=\sum_{j_1=1}^{q_1}\vep_{(t_{j_1},x_{j_1})}
\end{equation}
Recursively, from $k=2$ to $k=l$, for any $j_1=1,\dots,q_1;$ $j_2=1,\dots,q_2(j_1);\dots;$ $j_{k-1}=1,\dots,q_{k-1}(j|_{k-2})$, there exists a $q_{k}(j|_{k-1})$ such that
\begin{equation}
\begin{aligned}
m_k&\left(\cdot \;\cap\;\bigl(t_{j_1},x_{j_1},\dots,t_{j|_{k-1}},x_{j|_{k-1}}\bigr)\times \bigl((0,x_{j|_{k-1}}]\times (\gamma,\gamma^{-1})\bigr)\right)\\&=\sum_{j_k=1}^{q_k(j|_{k-1})}\mathds{1}\left\{(t_{j_1},x_{j_1},\dots,t_{j|_k},x_{j|_k})\in \cdot\right\}.
\end{aligned}
\end{equation}
By (\ref{finitek}), we can find a $\delta>0$ small enough so that, for any $k=1,\dots,l-1$ and $j_1=1,\dots,q_1$; $j_2=1,\dots,q_2(j_1);\dots;$ $j_{k}=1,\dots,q_{k}(j|_{k-1})$,
\begin{equation}\label{noboundary}
m_{k+1}\Big(\{(t_{j_1},x_{j_1},\dots,t_{j|_{k}},x_{j|_{k}})\}\times  \big( [x_{j|_{k}}-\delta,x_{j|_{k}}+\delta]\times (\gamma,\gamma^{-1})\big)\Big)=0.
\end{equation} 
Since $m_{1}$ is simple, using (\ref{finite1}) we can choose  an $\epsilon<\delta/2$ small enough so that $\bar{B}_{\epsilon}(t_1,x_1),\dots, \bar{B}_{\epsilon}(t_{q_1},x_{q_1})$ are disjoint subsets of $(0,t')\times (\gamma,\gamma^{-1})$. Similarly, using (\ref{finitek}) for $\epsilon>0$ small enough we have, for $k=1,\dots,l-1$ and for any $j_1=1,\dots,q_1$; $j_2=1,\dots,q_2(j_1);\dots;$ $j_{k}=1,\dots,q_{k}(j|_{k-1})$,
\begin{equation}\label{disjointness}
\left\{\bar B_{\epsilon}(t_{j|_{k+1}},x_{j|_{k+1}}):j_{k+1}=1,\dots,q_{k+1}(j|_k)\right\}
\end{equation}
is a disjoint collection of subsets in $(0,x_{j|_k}-\delta)\times (\gamma,\gamma^{-1})$. 

Via $m_{1}(\{(t')\}\times (\gamma,\gamma^{-1}))=0$ and (\ref{finite1}), we can use Proposition\ref{gozde} to get
\begin{equation}\label{cont1}
m_1^\nn(\cdot\; \cap (0,t']\times \gam)=\sum_{j_1=1}^{q_1}\vep_{(t_{j_1}^\nn,x_{j_1}^\nn)}
\end{equation}
and
\begin{equation}\label{cont2}
(t_{j_1}^\nn,x_{j_1}^\nn)\in \bar{B}_{\epsilon}(t_{j_1},x_{j_1}),\quad \forall j_1=1,\dots,q_1.
\end{equation}
Similarly, using (\ref{finitek}), (\ref{noboundary}) and that (\ref{disjointness}) is a disjoint collection of subsets in $(0,x_{j|_k}-\delta)\times (\gamma,\gamma^{-1})$, we can employ Proposition \ref{gozde} to conclude the following: for any $k=1,\dots,l-1$ and for any $j_1=1,\dots,q_1$; $j_2=1,\dots,q_2(j_1);\dots;$ $j_{k}=1,\dots,q_{k}(j|_{k-1})$,
\begin{equation}\label{cont3}
\begin{aligned}
&\;\;m^\nn_{k+1}\Big(\cdot\;\cap \;\bar B_{\epsilon}(t_{j_1},x_{j_1})\times \cdots\times \bar B_{\epsilon}(t_{j|_k},x_{j|_k})\times (0,x_{j|_k}]\times \gam\Big)
\\&=m^\nn_{k+1}\Big(\cdot\;\cap \;\bar B_{\epsilon}(t_{j_1},x_{j_1})\times \cdots\times \bar B_{\epsilon}(t_{j|_k},x_{j|_k})\times (0,x^\nn_{j|_k}]\times \gam\Big)
\\&=\sum_{j_{k+1}=1}^{q_{k+1}(j|_k)}\vep_{(t^\nn_{j_1},x^\nn_{j_1},\dots,t^\nn_{j|_{k+1}},x^\nn_{j|_{k+1}})},
\end{aligned}
\end{equation}
and
\begin{equation}\label{cont4}
(t_{j|_{k+1}}^\nn,x_{j|_{k+1}}^\nn)\in \bar B_{\epsilon }(t_{j|_{k+1}},x_{j|_{k+1}}),\quad \forall j_{k+1}=1,\dots, q_{k+1}(j|_k).
\end{equation}
Let $f,g\in D([0,t'])$. Designating by $d(f,g)$ the $J_1$ distance on $D([0,t'])$, recall that
\begin{equation}\label{cont5}
d(f,g)=\inf_{\lambda\in \Lambda}\left\{\sup_{t\in[0,t']}\parallel \lambda(t)-t\parallel \vee\sup_{t\in[0,t']}\parallel f(\lambda(t))-g(t) \parallel\right\},
\end{equation}
where the $\Lambda$ is the set of strictly increasing, continuous mappings of $[0,t']$ onto itself and $\vee$ stands for maximum.

Let $\lambda^\nn:[0,t']\to[0,t']$ be the piecewise linear map that maps $t_{j_1}$ to $t_{j_1}^\nn$ for all $j_1=1,\dots,q_1$ with $\lambda^\nn(0)=0$ and $\lambda^\nn(t')=t'$. Then by (\ref{cont1}) and (\ref{cont2}), we have for all $n$ large enough,
\begin{equation}
\sup_{t\in[0,t']}\parallel \lambda^\nn(t)-t \parallel \leq \epsilon q_1.
\end{equation}
Moreover, by (\ref{cont3}) and (\ref{cont4}), for all $n$ large enough,
\begin{equation}
\sup_{t\in[0,t']}\parallel T^\g_l(m^\nn)(\lambda^\nn(t))-T^\g_l(m) \parallel\leq \epsilon \sum_{j_1=1}^{q_1}\cdots \sum_{j_l=1}^{q_{l}(j|_{l-1})}1.
\end{equation}
Hence, by (\ref{cont5}), we are finished by the proof of continuity of $T_l^\g$.

Next, we prove that $\ovl{T}^\g_l(m^\nn)\to \ovl{T}^\g_l(m)$. We use the notation $\ovl{T}_l^\g(m)=\ovl{m}$ with $\ovl{m}=(\ovl{m}_1,\dots,\ovl{m}_{l-1})$ and similarly, $\ovl{T}_l(m^\nn)=\ovl{m}^\nn$ with $\ovl{m}^\nn=(\ovl{m}^\nn_1,\dots,\ovl{m}^\nn_{l-1})$. We first prove that $\ovl{m}_1^\nn\to \ovl{m}_1$. It is enough to show that for any $0<s_1<s_2$ and $F\subset (0,\infty)$ relatively compact with  $\ovl{m}_1(\partial ([s_1,s_2]\times F))=0$,  $\ovl{m}^\nn_1([s_1,s_2]\times F)\to\ovl{m}^\nn_1([s_1,s_2]\times F)$. Let $\{(t_{j_1}^\g,x_{j_1}^\g):j_1\in\N\}$ denote the marks of $m_1^\g$ labeled so that $t_1^\g<t_2^g<\cdots$ and let $p=\inf\{m:x^\g_1+\cdots +x_m^\g>t\}+1$. Let $t_p<t'<t_{p+1}$. Hence, after relabeling,
\begin{equation}
m_1((0,t']\times (\gamma,\gamma^{-1}))=\sum_{j_1=1}^p \vep_{(t_{j_1},x_{j_1})}
\end{equation}
and $m_1(\partial \big((0,t']\times \gam\big))=0$.
We choose $\delta>0$ small enough as in (\ref{noboundary}). As before, we choose $\epsilon<\delta/2$ small enough so that $\bar{B}_{\epsilon}(t^\g_{1},x^\g_{1}),\dots,\bar{B}_{\epsilon}(t^\g_{p},x^\g_{p})\subset (0,t']\times\gam$ is a disjoint collection.
Using Proposition \ref{gozde} we have for any $\epsilon>0$, after relabeling, for $n$ large enough,
\begin{equation}\label{anani1}
m_1^\nn(\cdot\;\cap\;  (0,t']\times \gam)=\sum_{j_1=1}^{p} \vep_{(t_{j_1}^\nn,x_{j_1}^\nn)}
\end{equation}
and
\begin{equation}
(t_{j_1}^\nn,x_{j_1}^\nn)\in \bar{B}_{\epsilon}(t_{j_1},x_{j_1}),\quad \forall j_1=1,\dots,p.
\end{equation}
Hence, using Proposition (\ref{gozde}) we get for all $j_1=1,\dots,p$,
\begin{equation}
m_2(\bar{B}_{\epsilon}(t_{j_1},x_{j_1})\times \bigl( (0,x^\g_{j_1}]\times \gam\bigr))=m_2^\nn(\bar{B}_{\epsilon}(t^\nn_{j_1},x^\nn_{j_1})\times \bigl((0,x_{j_1}^\nn]\times \gam\bigr)).
\end{equation}
Finally, choosing $\epsilon$ small enough so that $2 p \epsilon$ is smaller than the distance of the marks of $\ovl{m}_1$ in $[s_1,s_2]\times F$ to the boundary of $[s_1,s_2]\times F$ proves that $\ovl{m}^\nn_1([s_1,s_2]\times F)\to\ovl{m}^\nn_1([s_1,s_2]\times F)$. The convergence of $\ovl{m}_k^\nn$ to $\ovl{m}_k$ is trivial using the same exact proof and definitions.

\end{proof}

\subsection{Proof of the convergence of the clock processes} 
\begin{proof}[Proof of Theorem \ref{fixL}]

\newcommand{\kk}{{(k)}}

Let $\chi^\lag$ be a RPC with parameters $\alpha_1,\dots,\alpha_{\lag}$ and constants\\ $D_1,\dots,D_\lag$, where the latter collection is given by Lemma \ref{lemanara}. Note that a.s. $\chi^{\lag}\in \M_{1,\lag}^\g$. As before, we define $\chi^{\g,\lag}=(\chi_{1}^{\g,\lag},\dots,\chi_{\lag}^{\g,\lag})$ by
\begin{equation}
 \chi_{k}^{\g,\lag}=\sum_{j|_k\in\N^k}\vep_{(t_{j_1},\chi_{j_1},\dots,t_{j|_k},\chi_{j|_k})}\prod_{i=1}^k\mathds{1}\left\{\chi_{j|_i}\in\gam\right\}.
\end{equation}
For $k=1,\dots,\lag$, we define the maps $T_{k,\lag}^\g:\M_{1,\lag}\to D([0,\infty))$ by
\begin{equation}
T_{k,\lag}^\g=T_{\lag-k+1}^\g\bigl(\overline{T}^\g_{\lag-k+2}\circ\cdots\circ \overline{T}^\g_{\lag}\bigr).
\end{equation}
We also set $T_{k,\lag}^\g$ as the identity map for any $k>\lag$. 
\newcommand{\kp}{^{(k)}}

Let $(\Upsilon_k:k=1,\dots,\lag)$ be an independent collection of PPPs on $H$ where for each $k$, the mean measure of $\Upsilon_k$ is $dt\times D_k\alpha_k x^{-1-\alpha_k}dx$. We denote the marks of $\Upsilon_k$ by $\Upsilon_k=\sum_{i\in\N}\vep_{(t_i\kp,\eta_i^{(k)})}$. For $\gamma>0$, set $\Upsilon_k^\g=\sum_{i\in\N}\vep_{(t_i\kp,\eta_i^{(k)})}\mathds{1}\{\eta_i^{(k)}\in(\gamma,\gamma^{-1})\}$. Due to correlation structure of $\chi^{\lag}$, we have $\ovl{T}_2^\g\circ \cdots \circ \overline{T}_\lag^\g(\chi^\lag)$ is independent of $(\chi_{1}^{\lag},\dots,\chi_{\lag-1}^{\lag})$ and has the same distribution as $\Upsilon_{\lag}^\g$. Using this, the fact that $(\chi_{1}^{\lag},\dots\chi_{\lag-1}^{\lag})\overset{d}=\chi^{\lag-1}$ where  $\chi^{\lag-1}$ has the distribution of a PRC with parameters $\alpha_1,\dots,\alpha_{\lag-1}$ and constants $D_1,\dots,D_{\lag-1}$, and the identity (\ref{identity}) we get
\begin{equation}
\begin{aligned}
&\left(T^\g_{k,\lag}(\chi^\lag):k=1,\dots,\lag\right)
\\&=\left(T^\g_{\lag,\lag}(\chi^\lag)\circ T^\g_{k,\lag-1}(\chi_{1}^{\lag},\dots,\chi_{\lag-1}^{\lag}):k=1,\dots,\lag\right)
\\&=\left(T_1^\g(\ovl{T}^\g_2\circ\cdots\circ\ovl{T}^\g_\lag)(\chi^\lag)\circ T^\g_{k,\lag-1}(\chi_{1}^{\lag-1},\dots,\chi_{\lag-1}^{\lag}):k=1,\dots,\lag\right)
\\&\overset{d}=\left(T_1^\g(\Upsilon_\lag^\g)\circ T^\g_{k,\lag-1}(\chi^{\lag-1}):k=1,\dots,\lag\right),
\end{aligned}
\end{equation}
where, in the last display above $\Upsilon_{\lag}^\g$ and $\chi^{\lag-1}$ are independent. Proceeding inductively, we reach
\begin{equation}\label{yettigari}
\left(T^\g_{k,\lag}(\chi^\lag):k=1,\dots,\lag\right)=\left(T_1^\g(\Upsilon_\lag^{\g})\circ\cdots\circ T_{1}^\g(\Upsilon_k^{\g}):k=1,\dots,\lag\right).
\end{equation}
Let $V_{\alpha_k}(t)=\sum_{t_i\kp\leq t}\eta_i^\kk$ and $V_{\alpha_k}^\g(t)=\sum_{t_i\kp\leq t}\eta_i^\kk\mathds{1}\{\eta_i^\kk\in (\gamma,\gamma^{-1})\}$. Note that, $V_{\alpha_k}$ and $V_{\alpha_k}^\g$ are L\'evy subordinators with corresponding L\'evy measures $D_k\alpha_kx^{-1-\alpha_k}dx$ and $\mathds{1}\{x\in(\gamma,\gamma^{-1})\}D_k\alpha_kx^{-1-\alpha_k}dx$, respectively. We set
$
 \widetilde{V}_{k,\lag}=V_{\alpha_{\lag}}\circ V_{\alpha_{\lag-1}}\circ\cdots\circ V_{\alpha_k},
$
and
$
  \widetilde{V}^\g_{k,\lag}=V^\g_{\alpha_{\lag}}\circ V^\g_{\alpha_{\lag-1}}\circ\cdots\circ V^\g_{\alpha_k}.
$
Hence, by (\ref{yettigari}), we get 
\begin{equation}\label{susa1}
 \Big(T_{k,\lag}^\g(\zeta^\lag):k=1,\dots,\lag \Big)\overset{d}= \Big(\widetilde{V}^\g_{k,\lag}:k=1,\dots,\lag\Big).
\end{equation}
It is trivial that as $\gamma\to 0$
\begin{equation}\label{vs2}
 \Big(\widetilde{V}^\g_{k,\lag}:k=1,\dots,\lag\Big)\Longrightarrow \Big(\widetilde{V}_{k,\lag}:k=1,\dots,\lag\Big)
\end{equation}
weakly on the space $D^{\lag}([0,\infty))$ equipped with the product Skorohod $J_1$ topology.

By Lemma \ref{contlem}, for each $k$, $T^\g_{k,\lag}$ is continuous on $\M_{1,\lag}^\g$. By Proposition \ref{weaklemma} $\zeta^{\nn,\lag}\Rightarrow \chi^\lag$ for every environment in $\Omega'_L$, where $\mathbb{P}(\Omega'_L)=1$. Finally, since a.s. $\chi^{\lag}\in \M_{1,\lag}^\g$ for any $\gamma>0$  we can employ the continuous mapping theorem (see Theorem 2.7 in \cite{Bill}) to conclude that for any $\gamma>0$
\begin{equation}
 \Big(T_{k,\lag}^\g(\zeta^{\nn,\lag}):k=1,\dots,\lag\Big)\Longrightarrow \Big(\widetilde{V}^\g_{k,\lag}:k=1,\dots,\lag\Big).
\end{equation}
\iffalse
Now we define the clock processes where the jumps are restricted to $(\gamma,\gamma^{-1})$, after rescaling. More precisely, defining $\zeta_{\lag}^{\nn,\g,L}=(\zeta_{1,\lag}^{\nn,\g,L},\dots,\zeta_{\lag,\lag}^{\nn,\g,L})$ by
\begin{equation}\label{rcollg}
 \zeta_{k,\lag}^{\nn,\g,L}=\sum_{j_1,\dots,j_k\in\N}\vep_{(t_{j_1}^\nn,\xi_{j_1}^\nn,\dots,t_{j_1\dots j_k}^\nn,\xi_{j_1\dots j_k}^\nn)}\prod_{i=1}^k \mathds{1}\{\xi_{j|_i}^\nn\in (\gamma,\gamma^{-1})\},
\end{equation}
we set
\begin{equation}
S_{k,L}^{\nn,\g}=T_{\lag-k+1}\bigl(\widetilde{T}_{\lag-k+2}\circ\cdots \tilde{T}_\lag\bigr)(\zeta_\lag^{\nn,\g,L}).
\end{equation}
\fi
Now we want to prove that, $\mathbb{P}$-a.s. for any $k=1,\dots,\lag$ and $\epsilon>0$
\begin{equation}
 \lim_{\gamma\to0}\lim_{n\to \infty} \Pp\Big(d\big(S_{k,L}^\nn,T_{k,\lag}^\g(\zeta^{\nn,\lag})\big)\geq \epsilon\Big)=0
\end{equation}
where $d$ denotes the $J_1$ distance on $D([0,\infty))$. It is enough to check that $\mathbb{P}$-a.s. for any $k=1,\dots,\lag$ and $t>0$,
\begin{equation}\label{aaa}
\lim_{\gamma\to 0}\lim_{n\to\infty}\Pp(A_{k,\lag}^{\nn,\g}(t)\geq \epsilon)=0 
\end{equation}
where for $k_1=1,\dots,\lag$ and $k_2=k_1,\dots,\lag$,
\begin{equation*}
 A_{k_1,k_2}^{\nn,\gamma}(t)=\sum_{t^\nn_{j_1}\leq t } \;\; \sum_{t_{j|_2}\leq \xi_{j_1}^\nn  }\cdots \sum_{t_{j|_{k_2}}\leq \xi^\nn_{j|_{k_2-1}}} \xi_{j|_{k_2}}^\nn \inc{\xi^\nn_{j|_{k_1}}\notin (\gamma,\gamma^{-1}) }.
\end{equation*}
Then, using Lemma \ref{lemanara} and the conditioning argument in the proof of Proposition \ref{weaklemma} we can conclude that $\exists \ovl{\Omega}_L$ with $\mathbb{P}(\ovl\Omega_L)=1$ such that for any environment in $\ovl\Omega_L$, for any $k=1,\dots,\lag$ and $t>0$,
\begin{equation*}
A_{k,\lag}^{\nn,\g}(t)\Rightarrow V_{\alpha_{\lag}}\circ\cdots\circ V_{\alpha_{k+1}}\circ V_{\alpha_{k},\g}\circ V_{\alpha_{k-1}}\circ\cdots\circ V_{\alpha_1}(t),
\end{equation*}
where $V_{\alpha_1},\dots,V_{\alpha_{k-1}},V_{\alpha_{k+1}},\dots V_{\alpha_{\lag}}$ are as before and $V_{\alpha_k,\g}(t)=\sum_{t_i\leq t}\eta_i^\kk\mathds{1}\{\eta_i^\kk\notin (\gamma,\gamma^{-1})\}$. Thus,
\begin{equation}\label{weakA}
A_k^{\nn,\gamma}(t)\Rightarrow V_{\alpha_{k+1}\alpha_{k+2}\cdots\alpha_{\lag}}\circ V_{\alpha_k,\g}\circ V_{\alpha_{1}\cdots\alpha_{k-1}}(t),
\end{equation}
where $V_{\alpha_{k+1}\alpha_{k+2}\cdots\alpha_{\lag}}$ and $V_{\alpha_{1}\cdots\alpha_{k-1}}$ are two independent subordinators that are $\alpha_{k+1}\alpha_{k+2}$ $\cdots\alpha_{\lag}$ and ${\alpha_{1}\cdots\alpha_{k-1}}$ stable, respectively, and independent from $V_{\alpha_k,\g}$.
For any $0<\alpha,\alpha'<1$, let $V_{\alpha,\g}$ and $V_{\alpha'}$ be independent subordinators with L\'evy measures $\nu_{\g}(dx)=\mathds{1}\{x\notin (\gamma,\gamma^{-1})\}D\alpha x^{-1-\alpha}dx$ and $\nu'(dx)=D'\alpha'x^{-1-\alpha'}dx$, respectively. For any $T>0$ we have $E[V_{\alpha,\g}(T)]\leq C T \gamma^{1-\alpha}$ for some positive constant $C$. For any $\epsilon'>0$ given, if we choose $T$ large enough so that $P(V_{\alpha'}(t)>T)\leq \epsilon'$, by Chebyshev inequality we get
\begin{equation}\label{es1}
P(V_{\alpha,\g}(V_{\alpha'})(t)\geq \epsilon)\leq \frac{CT\gamma^{1-\alpha}}{\epsilon}+\epsilon'.
\end{equation}
Moreover, by the stability of $\alpha$-stable subordinators we have $P(V_{\alpha'}(V_{\alpha,\g})(t)\geq \epsilon)=P(V_{\alpha'}(1)(V_{\alpha,\g}(t))^{1/\alpha}\geq \epsilon)$. Hence, if we choose now $M$ large enough so that $P(V_{\alpha'}(1)\geq M)\leq \epsilon'$ we get
\begin{equation}\label{es2}
P(V_{\alpha'}(V_{\alpha,\g})(t)\geq \epsilon)\leq \frac{M^\alpha t \gamma^{1-\delta}}{\epsilon^{\alpha}}+\epsilon'.
\end{equation}
Hence, combining (\ref{es1}) and (\ref{es2}) with (\ref{weakA}) we get (\ref{aaa}).
Thus, using (\ref{vs2}) and setting $\wt\Omega_L=\Omega'_L\cap \ovl{\Omega}_L$, we have $\mathbb{P}(\wt\Omega_L)=1$ and for any environment in $\wt\Omega_L$, as $n\to\infty$
\begin{equation}
 \Big(S_{k,L}^\nn:k=1,\dots,\lag\Big)\Longrightarrow \Big(\widetilde{V}_{k,\lag}:k=1,\dots,\lag\Big).
\end{equation}
Recall (\ref{Ralphas}) and that $\alpha_{k,L}=\alpha_k$. Observe that,
\begin{equation}
 Z_{R(k-1/L),R(k/L)}(c_k\cdot)\overset{d}=V_{\alpha_k}(\cdot),\quad c_k=D_{k}{\Gamma(1-\alpha_k)}.
\end{equation}
The independence of $Z_{R(k-1/L),R(k/L)}$ in $k$ and the stability property finish the proof of Theorem \ref{fixL} with the constants $b_{\lag,L}=c_{\lag}$ and
$
 b_{k,L}=c_k c_{k+1}^{\alpha_{k}} c_{k+2}^{\alpha_{k}\alpha_{k+1}}\cdots c_{\lag}^{\alpha_{k}\cdots\alpha_{\lag-1}},
$ for $k=1,\dots,\lag-1$.

\end{proof}

\section{Convergence of the two-time correlation function.}\label{Section3}

\begin{proof}[Proof of Theorem \ref{thrm1}]
Observe that $\{S_{k,L}(i):i\in\N\}\cap [t,t+s]=\emptyset$ implies that $\langle X(t),X(t+u)\rangle \geq k,\;\forall u\in[0,s]$. On the other hand, if $\{S_{k,L}(i):i\in\N\}\cap [t,t+s]\not=\emptyset$ there is at least one jump beyond and including the $k$-th level during the time interval $[t,t+s]$. Thus, the only way $\langle X(t),X(t+u)\rangle \geq k,\;\forall u\in[0,s]$ can happen is that at each such jump the discrete Markov chain $Y|_k$ jumps back to the same vertex it jumped from, and since $Y|_k$ chain chooses the last coordinate $\mu_k$ uniform at random on $[n]$, we arrive at
\begin{equation}\label{jump2}
\Pi_{k}(t,s)={P}\Big(\{S_{k,L}(i):\;i\in\N\}\cap[t,t+s]=\emptyset\Big)+O(1/n).
\end{equation}
By Theorem \ref{fixL}, for $k=1,\dots,\lag$, for any environment in $\wt\Omega_L$, as $n\to\infty$
\begin{equation}
S_{k,L}^\nn(\cdot)\Longrightarrow \tilde{Z}_{k,\lag}(\cdot)
\end{equation}
on $D([0,\infty))$ equipped with $J_1$ topology, where $\tilde{Z}_{k,\lag}$ is as in Theorem \ref{fixL}. Since $\tilde{Z}_{k,\lag}$ is an $\alpha$-stable subordinator with $\alpha=\alpha_{k,L}\cdots \alpha_{\lag,L}$, and thus, its L\'evy measure has no atoms. Hence, using the continuous mapping theorem and (\ref{jump2}), we can conclude that for any environment in $\wt\Omega_L$, as $n\to\infty$
\begin{equation}
\Pi_k^\nn(\tn,\theta\tn)\longrightarrow {P}(\{\tilde{Z}_{k-1,\lag}(r):r\geq 0\}\cap [1,1+\theta]=\emptyset)=Asl_{\alpha_{k,L}\cdots \alpha_{\lag,L}}\left(\frac{1}{1+\theta}\right).
\end{equation}

We now prove that $\mathbb{P}$-a.s. $\Pi_{\lag+1}(\tn,\theta\tn)\to 0$ as $n\to\infty$. This convergence implies that $\Pi_{k}(\tn,\theta\tn)\to 0$ for $k\geq\lag+1$, since  $\Pi_{k+1}(\tn,\theta\tn)\leq \Pi_k(\tn,\theta\tn)$, and finishes the proof. Let $S_{\lag,\lag}=T_{\lag,\lag}(m)=T_{\lag,\lag}(m|_\lag)$. Recalling (\ref{impcomp}), we have
\begin{equation}\label{comps1}
S_{\lag+1,L}\circ{S}_{\lag,\lag}=S_{\lag,L}.
\end{equation}
Observe that, $S_{\lag,\lag}$ corresponds to the $\lag$-th clock process of the GREM-like trap model on $\lag$-levels tree where the environment is kept the same on those levels, the only difference is that it is in discrete time where the exponential waiting times are replaced by geometric random variables. Also, note that $c_n(\lag)$ is a time scale of observation for this GREM-like trap model where all the levels are aging.  Therefore, with the same exact proof of Theorem \ref{fixL} we can see that $\exists \Omega''_L$ with $\mathbb{P}(\Omega''_L)=1$ such that for any environment in $\Omega''_L$, as $n\to\infty$ 
\begin{equation}\label{comps2}
\frac{S_{\lag,\lag}(\cdot\; \aan{\lag})}{\ccn{\lag}}\Longrightarrow V_{\alpha_{\lag,L}}(\cdot)
\end{equation}
weakly on $D([0,\infty))$ equipped with Skorohod $J_1$ topology, where $V_{\alpha_{\lag,L}}$ is an $\alpha_{\lag}$-stable subordinator. Using Theorem \ref{fixL}, for any $\epsilon>0$ we can choose $t$ large enough so that for all $n$ large,  
\begin{equation}
\Pp\Bigl(\frac{S_{\lag,L}(t\aan{\lag})}{\tn}>1+2\theta\Bigr) \geq 1-\epsilon/4.
\end{equation}
For $t$ chosen as above, using (\ref{comps2}) we can choose $T>0$ large enough so that for all $n$ large,
\begin{equation}
\Pp\Bigl(\frac{S_{\lag,\lag}(t \aan{\lag})}{\ccn{\lag}}\leq T\Bigr)\geq 1-\epsilon/4.
\end{equation} 
Hence, by (\ref{comps2}) we have for all $n$ large (recall that $c_n(\lag)=a_n(\lag+1)$),
\begin{equation}\label{critic}
\mathcal{P}\Big(\frac{S_{\lag+1,L}(T\aan{\lag+1})}{\tn}\geq 1+2\theta\Big)\geq 1-\epsilon/2.
\end{equation}
Recall the collection in (\ref{colljumps}) and how we have constructed it from the chain $J$. For $r=\lag+1,\dots,L$, let
\begin{equation}
S_{1,L,\g}^{r}(t)=\sum_{j_1=1}^{t}\;\;\sum_{{j_2}=1}^{\xi_{j_1}}\cdots\sum_{{j_{\lag+1} }=1}^{\xi_{j|_{\lag}}}\Lambda^r_{\g}(j|_{\lag+1})
\end{equation}
where  
\begin{equation}
 \Lambda^{r}_\g(j|_{\lag+1})=\sum_{j_{\lag+2}=1}^{\xi_{j|_{\lag+1}}} \cdots \sum_{j_{L}=1}^{\xi_{j|_{L-1}}}\xi_{j|_L}\mathds{1}\{\lambda^{-1}(J({j|_r}))\leq \gamma n^{1/\alpha_r}\}.
\end{equation}
Here, $S_{1,L,\g}^{r}$ is the 1st level clock process, where, restricted to jumps where the walk is at a vertex $\mu$ with $\lambda^{-1}(\mu|_r)\leq \gamma n^{1/\alpha_r}$. A modification of the proof of Lemma \ref{propcasc} yields that $\mathbb{P}$-a.s. for any $t>0$, as $n\to\infty$
\begin{equation}
S_{1,L,\g}^{r}(ta_n(1))/{c_n}\Rightarrow S^{r}_\g(t)
\end{equation}
where $S_\g^{r}(t)$ is a positive random variable with the Laplace transform
\begin{equation}
E[e^{-\kappa S_\g^{r}(t)}]=e^{-\phi_\g(t,\kappa)}, \quad \kappa>0,
\end{equation}
and for any $\kappa>0$ and $t>0$, $\phi_\g(t,\kappa)\to0$ as $\gamma\to 0$. Hence, $\mathbb{P}$-a.s.~for any $r=\lag+1,\dots,L$, $t>0$ and $\epsilon>0$
\begin{equation}\label{critic2}
\lim_{\gamma\to 0}\limsup_{n\to\infty}\Pp\left(S_{1,L,\g}^{r}\big(ta_n(1)\big)\geq \epsilon c_n\right)=0.
\end{equation}
For $T'>0$, define
\begin{equation}
\begin{aligned}
A_n=&\left\{\exists j_1=1,\dots,T'\aan{1},\exists j_2=1,\dots,T'\aan{2},\dots,\exists j_{\lag+1}=1,\dots,T'\aan{\lag+1} \text{ s.t. } \right.\\&\;\;\;\;\left.\forall k=1,\dots,\lag \;\xi^\nn(j|_k)\geq \gamma\;\text{ and } \Lambda^\g(j|_{\lag+1})\geq \theta\tn/2 \right\}
\end{aligned}
\end{equation}
where
\begin{equation}
\Lambda^{\g}(j|_{\lag+1})=\sum_{j_{\lag+2}=1}^{\eta_{j|_{\lag+1}}} \cdots \sum_{j_{L}=1}^{\eta_{j|_{L-1}}}\eta_{j|_L}\prod_{r=\lag+1}^L \mathds{1}\{\lambda^{-1}(J({j|_r}))> \gamma n^{1/\alpha_r}\}.
\end{equation}
By, (\ref{aaa}), (\ref{critic}) and (\ref{critic2}) we have $\mathbb{P}$-a.s. for $T'$ large enough, for all $n$ large and $\gamma$ small 
$\Pi_{\lag+1}^\nn(t\tn,s\tn)\leq 
\Pp(A_n)+\epsilon/2.
$ Recall that $\tn=n^{1/\alpha_L+c}$, for some $c>0$. We define 
\begin{equation}
\begin{aligned}
W^\nn=&\left\{\mu|_\lag\in V|_\lag:\;\text{ if } \mu_{\lag+1},\dots,\mu_L\text{ s.t. } \forall r=\lag+1,\dots,L-1\right.
\\&\left. \lambda^{-1}(\mu|_\lag\mu_{\lag+1}\cdots\mu_r)\geq \gamma n^{1/\alpha_r} \text{ then } \max_{\mu_L\in [n]}\lambda^{-1}(\mu|_L)\leq n^{1/\alpha_L+c/2}\right\},
\end{aligned}
\end{equation}
and
\begin{equation}
\begin{aligned}
B_n=&\left\{\exists j_1=1,\dots,T'\aan{1},\dots,\exists j_{\lag}=1,\dots,T'\aan{\lag} \text{ s.t. } \right.\\&\left.\forall k=1,\dots,\lag\;\xi^\nn(j|_k)\geq \gamma\; \text{ and } J(j|_{\lag})\notin W^\nn  \right\}
\end{aligned}
\end{equation}
Since the maximum of $n^d$ i.i.d.~mean one exponential random variables is of order $\log n$, it follows that $\mathbb{P}$-a.s. for all $n$ large enough $\Pp(A_n)\leq \Pp(B_n)+\epsilon/2$. Note that
\begin{equation}\label{sokolo}
\mathbb{P}(\mu|_{\lag}\notin W^\nn)\leq Cn^{-\alpha_Lc/2},
\end{equation}
for some $C>0$. Since under $\mathbb{P}$, $\mathds{1}\{\mu|_{\lag}\notin W^\nn\}$ is independent of the random environment on the top $\lag$ levels we have
\begin{equation}
\mathbb{E}\left[\Pp(B_n)\right]=\left(\prod_{k=1}^\lag \aan{k}\mathbb{E}\left[\Pp\bigl(G(\lambda(\mu|_k)\geq \gamma \ccn{k}\bigr)\right]\right)\mathbb{P}(\mu|_{\lag}\notin W^\nn).
\end{equation}
By Lemma \ref{lemanara} and (\ref{sokolo}) the above quantity converges to 0 as $n\to\infty$. Since $\aan{1}\ll n$, we can control the fluctuations as in the proof of Lemma \ref{lemanara} to conclude that $\mathbb{P}$-a.s. as $n\to \infty$, $\Pp(B_n)\to 0$. Hence, denoting by $\ovl{\Omega}_L$ the subset of $\Omega$ such that for any environment in $\ovl{\Omega}_L$, as $n\to\infty$, $\Pp(B_n)\to 0$, we are finished with the proof of Theorem \ref{thrm1} where $\Omega_L=\wt\Omega_L\cap \ovl{\Omega}_L$.
\end{proof}

\begin{proof}[Proof of Corollary \ref{corol1}]
Since $R$ is uniformly continuous, for any $\epsilon>0$ given, we have for all $L$ large enough
\begin{equation}\label{sonispat1}
0\leq R\bigl(k/L\bigr)-R\bigl((k-1)/L\bigr)\leq \epsilon\quad \forall k=1,\dots,L.
\end{equation}
Recall the definition of $\alpha_{k,L}$
\begin{equation}
 \alpha_\kl{k}=\exp\Big\{-\Big(R\big(k/L\big)-R\big(k/L-1/L\big)\Big)\Big\}.
\end{equation}
Using (\ref{sonispat1}) for all $k=1,\dots,L$, 
\begin{equation*}
0\leq e^{-\epsilon}\bigl(R\bigl(k/L\bigr)-R\bigl((k-1)/L\bigr)\bigr)\leq 1-\alpha_{k,L}\leq R\bigl(k/L\bigr)-R\bigl((k-1)/L\bigr)\leq \epsilon. 
\end{equation*}
Hence, using the definition of $d_{k,L}$ (see (\ref{crtscales})), for all $k=1,\dots,L$, we have
\begin{equation}
R\bigl(1\bigr)-R\bigl((k-1)/L\bigr) +1\leq d_{k,L}\leq e^{\epsilon} \bigl(R\bigl(1\bigr)-R\bigl((k-1)/L\bigr)\bigr) +1.
\end{equation}
As a consequence,
\begin{equation}\label{sonispat2}
\lim_{L\to\infty}\frac{\lag(\rho)}{L}=r^*(\rho)=\sup\{s\geq 0: R(1)-R(s)+1>\rho\}.
\end{equation}
Since $\rho (0,d_{1,L})\setminus \{d_{L,L},\dots, d_{2,L}\}$ for all $L$ large enough, using Theorem \ref{thrm1}, we can conclude that for any $t,s>0$, for any environment in $\Omega'=\cap_L\Omega_L$, for any $L$ large enough, (\ref{corrlimit}) is satisfied. Via (\ref{sonispat2}) and the uniform continuity of $R$ we get
\begin{equation}
\bar{\alpha}_k=\exp\left(\bigl(R\bigl(\lag(\rho)/L\bigr)-R\bigl((k-1)/L\bigr)\bigr)\right)=\exp(\left(-(R\bigl(r^*(\rho)\bigl)-R\bigl(k/L\bigr)\right)+c(\epsilon)
\end{equation}
where $c(\epsilon)\to 0$ as $\epsilon \to 0$. Finally, since $q$ is smooth, using (\ref{sonispat2}) one last time, we see that the left hand side of (\ref{corrlimit}) is the Riemann sum approximation of the integral on the left hand side of (\ref{eqcorol1}). Thus, we are finished with the proof. 
\end{proof}
\bibliographystyle{plain}
\bibliography{GREMTrapBibl1}

\begin{thebibliography}{10}

\bibitem{BBC08}
G.~Ben~Arous, A.~Bovier, and J.~{\v{C}}ern{\'y}.
\newblock Universality of the {REM} for dynamics of mean-field spin glasses.
\newblock {\em Comm. Math. Phys.}, 282(3):663--695, 2008.

\bibitem{BBG03a}
G.~Ben~Arous, A.~Bovier, and V.~Gayrard.
\newblock Glauber dynamics of the random energy model. {I}. {M}etastable motion
  on the extreme states.
\newblock {\em Comm. Math. Phys.}, 235(3):379--425, 2003.

\bibitem{BBG03b}
G.~Ben~Arous, A.~Bovier, and V.~Gayrard.
\newblock Glauber dynamics of the random energy model. {II}. {A}ging below the
  critical temperature.
\newblock {\em Comm. Math. Phys.}, 236(1):1--54, 2003.

\bibitem{BC08}
G.~Ben~Arous and J.~{\v{C}}ern{\'y}.
\newblock The arcsine law as a universal aging scheme for trap models.
\newblock {\em Comm. Pure Appl. Math.}, 61(3):289--329, 2008.

\bibitem{BG12}
G.~Ben~Arous and O.~G{\"u}n.
\newblock Universality and extremal aging for dynamics of spin glasses on
  subexponential time scales.
\newblock {\em Comm. Pure Appl. Math.}, 65(1):77--127, 2012.

\bibitem{Ben62}
G.~Bennett.
\newblock Probability inequalities for the sum of independent random variables.
\newblock {\em Journal of the American Statistical Association},
  57(297):33--45, 1962.

\bibitem{BL00}
J.~Bertoin and J.-F. Le~Gall.
\newblock The {B}olthausen-{S}znitman coalescent and the genealogy of
  continuous-state branching processes.
\newblock {\em Probab. Theory Related Fields}, 117(2):249--266, 2000.

\bibitem{Bill}
P~Billingsley.
\newblock {\em Convergence of probability measures}.
\newblock Wiley Series in Probability and Statistics: Probability and
  Statistics. John Wiley \& Sons, Inc., New York, second edition, 1999.

\bibitem{Bingham}
N.~H. Bingham, C.~M. Goldie, and J.~L. Teugels.
\newblock {\em Regular variation}, volume~27 of {\em Encyclopedia of
  Mathematics and its Applications}.
\newblock Cambridge University Press, Cambridge, 1987.

\bibitem{Bou92}
J.-P. {Bouchaud}.
\newblock Weak ergodicity breaking and aging in disordered systems.
\newblock {\em J. Phys. I France}, 2(9):1705--1713, 1992.

\bibitem{BCKM98}
J.-P. Bouchaud, L.F. Cugliandolo, J.~Kurchan, and M.~Mezard.
\newblock Out of equilibrium dynamics in spin-glasses and other glassy systems.
\newblock {\em \tt arXiv:9702070}, 1997.

\bibitem{BD95}
J.-P. Bouchaud and D.S. Dean.
\newblock Aging on parisi's tree.
\newblock {\em J. Phys. I France}, 5(3):265--286, 1995.

\bibitem{BG13}
A.~Bovier and V.~Gayrard.
\newblock Convergence of clock processes in random environments and ageing in
  the {$p$}-spin {SK} model.
\newblock {\em Ann. Probab.}, 41(2):817--847, 2013.

\bibitem{BGS13}
A.~Bovier, V.~Gayrard, and A.~{\v{S}}vejda.
\newblock Convergence to extremal processes in random environments and extremal
  ageing in {SK} models.
\newblock {\em Probab. Theory Related Fields}, 157(1-2):251--283, 2013.

\bibitem{BK07}
A.~Bovier and I.~Kurkova.
\newblock Much ado about {D}errida's {GREM}.
\newblock In {\em Spin glasses}, volume 1900 of {\em Lecture Notes in Math.},
  pages 81--115. Springer, Berlin, 2007.

\bibitem{DurrettBook}
R.~Durrett.
\newblock {\em Probability: theory and examples}.
\newblock Cambridge Series in Statistical and Probabilistic Mathematics.
  Cambridge University Press, Cambridge, fourth edition, 2010.

\bibitem{DR78}
R.~Durrett and S.I. Resnick.
\newblock Functional limit theorems for dependent variables.
\newblock {\em Ann. Probab.}, 6(5):829--846, 1978.

\bibitem{FP14}
L.~R. {Fontes} and G.~R.~C. {Peixoto}.
\newblock {GREM-like K processes on trees with infinite depth}.
\newblock {\em \tt arXiv:1412.4291}, December 2014.

\bibitem{FGG12}
L.~R.~G. Fontes, R.~J. Gava, and V.~Gayrard.
\newblock The {$K$}-process on a tree as a scaling limit of the {GREM}-like
  trap model.
\newblock {\em Ann. Appl. Probab.}, 24(2):857--897, 2014.

\bibitem{Ver1}
V.~{Gayrard}.
\newblock {Aging in reversible dynamics of disordered systems. I. Emergence of
  the arcsine law in Bouchaud's asymmetric trap model on the complete graph}.
\newblock {\em \tt arXiv:1008.3855}, August 2010.

\bibitem{Ver2}
V.~{Gayrard}.
\newblock {Aging in reversible dynamics of disordered systems. II. Emergence of
  the arcsine law in the random hopping time dynamics of the REM}.
\newblock {\em \tt ArXiv:1008.3849}, August 2010.

\bibitem{G12}
V.~Gayrard.
\newblock Convergence of clock process in random environments and aging in
  {B}ouchaud's asymmetric trap model on the complete graph.
\newblock {\em Electron. J. Probab.}, 17:no. 58, 33, 2012.

\bibitem{VerMetro}
V.~{Gayrard}.
\newblock {Convergence of clock processes and aging in Metropolis dynamics of a
  truncated REM}.
\newblock {\em \tt ArXiv:1402.0388}, February 2014.

\bibitem{MPSTV84}
M.~M\'ezard, G.~Parisi, N.~Sourlas, G.~Toulouse, and M.~Virasoro.
\newblock Nature of the spin-glass phase.
\newblock {\em Phys. Rev. Lett.}, 52:1156--1159, 1984.

\bibitem{Nev92}
J.~Neveu.
\newblock A continuous ctate branching process in relation with the grem model
  of the spin glass theory.
\newblock {\em Rapport interne 267, Ecole Polytechnique Paris}, 1992.

\bibitem{ResNotes}
S.I. Resnick.
\newblock Point processes, regular variation and weak convergence.
\newblock {\em Adv. in Appl. Probab.}, 18(1):66--138, 1986.

\bibitem{Re87}
S.I. Resnick.
\newblock {\em Extreme values, regular variation and point processes}.
\newblock Springer Series in Operations Research and Financial Engineering.
  Springer, New York, 2008.
\newblock Reprint of the 1987 original.

\bibitem{SN00a}
M.~Sasaki and K.~Nemoto.
\newblock Analysis on aging in the generalized random energy model.
\newblock {\em Journal of the Physical Society of Japan}, 69(9):3045--3050,
  2000.

\bibitem{SN01}
M.~Sasaki and K.~Nemoto.
\newblock Numerical study of aging in the generalized random energy model.
\newblock {\em J. Phys. Soc. Jpn}, 70(4):1099--1104, 2001.

\end{thebibliography}

\end{document}